\newtheorem{thm}{Theorem}[section]
\newtheorem*{thm*}{Theorem}
\newtheorem{cor}[thm]{Corollary}
\newtheorem*{cor*}{Corollary}
\newtheorem{prop}[thm]{Proposition}
\newtheorem*{prop*}{Proposition}
\newtheorem{lem}[thm]{Lemma}
\newtheorem*{lem*}{Lemma}
\newtheorem*{conj*}{Conjecture}
\newtheorem*{quest*}{Question}
\theoremstyle{definition}
\newtheorem{defn}[thm]{Definition}
\newtheorem{exmp}[thm]{Example}
\theoremstyle{remark}
\newtheorem{rem}[thm]{Remark}
\newtheorem*{ack*}{Acknowledgements}
\newcommand{\modsf}{\mathsf{mod}\ \!} 
\newcommand{\N}{\mathbb{N}} 
\newcommand{\Z}{\mathbb{Z}} 
\newcommand{\pdim}{{\rm projdim}\ \!} 
\newcommand{\idim}{{\rm injdim}\ \!} 
\newcommand{\gldim}{{\rm gldim}\ \!} 
\DeclareFontFamily{U}{wncy}{}
\DeclareFontShape{U}{wncy}{m}{n}{<->wncyr10}{}
\DeclareSymbolFont{mcy}{U}{wncy}{m}{n}
\DeclareMathSymbol{\Sh}{\mathord}{mcy}{"78} 
\DeclareMathOperator{\sh}{\Sh\,}
\newcommand{\id}{\mathrm{id}} 
\DeclareMathOperator{\Hom}{Hom}
\DeclareMathOperator{\Ext}{Ext}
\DeclareMathOperator{\Tor}{Tor}
\DeclareMathOperator{\rad}{rad}
\DeclareMathOperator{\wt}{wt}
\title[Finite generation for Gorenstein monomial algebras]{Finite generation for Hochschild cohomology\\ of Gorenstein monomial algebras} 
\author[Dotsenko]{Vladimir Dotsenko} 
\address{School of Mathematics, Trinity College, Dublin 2, Ireland}
\email{vdots@maths.tcd.ie}
\author[G\'elinas]{Vincent G\'elinas} 
\address{School of Mathematics, Trinity College, Dublin 2, Ireland}
\email{vgelinas@maths.tcd.ie}
\author[Tamaroff]{Pedro Tamaroff} 
\address{School of Mathematics, Trinity College, Dublin 2, Ireland}
\email{pedro@maths.tcd.ie}
\date{}
\dedicatory{To Ed Green with deep admiration of his work on the homology theory of monomial algebras}
\begin{document}

\begin{abstract} 
We show that a finite dimensional monomial algebra satisfies the finite generation conditions of Snashall--Solberg for Hochschild cohomology if and only if it is Gorenstein. This gives, in the case of monomial algebras, the converse to a theorem of Erdmann--Holloway--Snashall--Solberg--Taillefer. We also give a necessary and sufficient combinatorial criterion for finite generation.  
\end{abstract}

\maketitle 
\setcounter{tocdepth}{1}
\tableofcontents

\section*{Introduction} 

The present paper is intended as a contribution to the study of support varieties for finite dimensional algebras. The introduction of support varieties in modular representation theory \cite{Car83}, relying on the finite generation theorems of Evens~\cite{Evens}, Golod~\cite{Golod}, and Venkov~\cite{Venkov} for group cohomology, has revolutionised the subject since their appearance and has led to deep structural insight into the stable module category. 

A good theory of support varieties for finite dimensional algebras $\Lambda$ was developed by Snashall and Solberg \cite{SS04} under the hypothesis that certain finite generation (${\bf Fg}$) conditions hold for Hochschild cohomology. Determining which finite dimensional algebras $\Lambda$ satisfy ${\bf Fg}$ is an important open problem of representation theory. Furuya and Snashall \cite{FuSn} gave some explicit examples of Gorenstein, non self-injective monomial algebras satisfying ${\bf Fg}$. Furthermore, a theorem of Erdmann, Holloway, Snashall, Solberg, and Taillefer \cite[Thm. 1.5]{EHSST} states that any algebra satisfying ${\bf Fg}$ is necessarily Gorenstein. We establish the converse in the monomial case: 

\begin{thm*}[Th. \ref{gorensteinfg}] A monomial
algebra satisfies the conditions ${\bf Fg}$ if and only if it is Gorenstein. 
\end{thm*}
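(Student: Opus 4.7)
Since the implication ${\bf Fg} \Rightarrow$ Gorenstein is the theorem of Erdmann--Holloway--Snashall--Solberg--Taillefer quoted above, the plan is to prove the converse for a finite dimensional monomial algebra $\Lambda = kQ/I$: Gorenstein implies ${\bf Fg}$. The strategy is combinatorial throughout, producing an explicit finitely generated graded subalgebra $H \subseteq HH^*(\Lambda)$ over which $\Ext^*_\Lambda(\Lambda/\rad\Lambda,\Lambda/\rad\Lambda)$ is finitely generated.

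The starting point is Bardzell's minimal projective bimodule resolution $P_\bullet \twoheadrightarrow \Lambda$, whose terms are indexed by the admissible sequences $AP^n$ built from overlaps of the monomial relations generating $I$. Applying $\Hom_{\Lambda^e}(-,\Lambda)$ turns the computation of $HH^*(\Lambda)$ into the cohomology of an explicit combinatorial complex whose cochains of degree $n$ are spanned by pairs $(p,q)$ with $p$ an admissible $n$-path and $q$ a parallel nonzero path in $\Lambda$. The cup product on this model is controlled by concatenation of parallel paths, so finite generation is reduced to a question about admissible paths and cycles in $Q$.

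Under the Gorenstein hypothesis, the next step is to show that the combinatorial dynamics of admissible sequences is eventually periodic: every infinite AP-sequence must eventually enter a closed ``AP-cycle'' in $Q$, since otherwise one detects a simple module of infinite projective dimension whose stable syzygies witness infinite injective dimension, contradicting Gorenstein; symmetric arguments apply on the opposite side. The outcome is a finite list of closed combinatorial cycles $\gamma_1,\ldots,\gamma_r$ in the AP-structure that control the stable part of the theory, and this is precisely where I expect the ``necessary and sufficient combinatorial criterion'' of the paper to be formulated.

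Finally, to each AP-cycle $\gamma_i$ I would associate an explicit Hochschild cocycle $c_i$ as a closed cochain in Bardzell's model built from $\gamma_i$, and take $H$ to be the subalgebra of $HH^*(\Lambda)$ generated by $HH^0(\Lambda)$ and the $c_i$. Noetherianity of $H$ should follow from the commutation and concatenation properties of the $c_i$ along the disjoint cycles $\gamma_i$, while finite generation of $\Ext^*_\Lambda(\Lambda/\rad\Lambda,\Lambda/\rad\Lambda)$ over $H$ follows since each simple either has finite projective dimension (contributing only finitely many classes) or enters one of the periodic cycles at a finite stage, beyond which multiplication by the corresponding $c_i$ realises the $\Omega$-periodicity of its resolution. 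The principal obstacle I anticipate is the third step: pinning down the combinatorial criterion on $(Q,I)$ equivalent to Gorensteinness and then verifying, at the level of Bardzell cochains, that the classes $c_i$ indeed serve as genuine periodicity generators rather than merely as candidates.
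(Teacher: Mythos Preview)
Your high-level strategy---build periodicity operators in Hochschild cohomology from the eventual periodicity of resolutions and show $\Ext^*_\Lambda(\Bbbk,\Bbbk)$ is finite over the subalgebra they generate---matches the paper's. But the route you sketch is genuinely different from the one taken, and the difference matters.

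The paper does \emph{not} construct the periodicity classes directly as cocycles in Bardzell's bimodule complex. Instead it works entirely on the $\Ext$ side, using the explicit $A_\infty$-structure on $\Ext^*_\Lambda(\Bbbk,\Bbbk)$ computed by the third author, together with the identification ${\rm im}(\varphi_\Bbbk)=\mathcal{Z}_\infty\Ext^*_\Lambda(\Bbbk,\Bbbk)$ from \cite{BG}. The key technical input is a parity-based vanishing pattern for the higher products $m_n$ (Proposition~\ref{vanishingpatterns}), proved by comparing the Anick-chain and Govorov--Gruenberg descriptions of $\Tor$; this collapses the infinitely many higher commutators $[a;x_1,\dots,x_n]$ for an even-degree $a$ down to a single combinatorial identity $a c_1\cdots c_n = c_1\cdots c_n a$ in the chain basis (Proposition~\ref{combinatorialcriterion}). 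The periodicity operators $\chi_\upgamma$ are then defined as sums over rotations of \emph{stable relation cycles}---Anick chains built from perfect walks in the sense of Chen--Shen--Zhou---and the combinatorial criterion certifies their $A_\infty$-centrality without ever touching a Hochschild cochain directly. The finite generation then follows from a single class $\chi$ (a sum of powers of the $\chi_{\upgamma_i}$) which acts as an isomorphism on $\Ext^n$ for $n\ge\dim\Lambda+1$.

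Your Bardzell-cochain approach is closer in spirit to what Green--Snashall--Solberg did in \cite{GSS06}, and the paper remarks that their notion of stability was not quite strong enough; the perfect-path decomposition is what gives the extra leverage. So the obstacle you flag in your third step is real, and the paper's way around it is precisely the $A_\infty$ machinery you do not invoke. Separately, your argument that Gorenstein forces eventual periodicity (``otherwise one detects a simple of infinite projective dimension whose stable syzygies witness infinite injective dimension'') is not right as stated: over a Gorenstein algebra of positive dimension the simples typically \emph{do} have infinite projective dimension, and their syzygies are Gorenstein-projective, not obstructions. The correct input here is Theorem~\ref{gorensteincharacterisation}, which characterises the Gorenstein property combinatorially via tails and heads of Anick chains being perfect paths in the sense of \cite{CSZ}; this is the ``combinatorial criterion'' you anticipate, and it is an input rather than something derived from your syzygy argument.
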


In \cite{Nagase}, Nagase proved that a Nakayama algebra satisfies the conditions ${\bf Fg}$ if and only if it is Gorenstein. This is a particular case of our result, since Nakayama algebras are monomial algebras for special type of quivers (cycles or type A quivers). Our methods are however very different, and rely on the recent work \cite{BG} of Briggs and the second named author who proposed, for a finite dimensional algebra~$\Lambda$, an approach towards studying the conditions ${\bf Fg}$ in terms of the canonical $A_\infty$-structure on the Yoneda algebra $\Ext_\Lambda^*(\Bbbk, \Bbbk)$ of the module $\Bbbk = 
\Lambda/\rad\Lambda$. Their work relies on earlier ideas of Green, Snashall, and Solberg \cite{GSS06} to study the ring structure on Hochschild cohomology in terms of its image under the characteristic homomorphism 
 \[
\varphi_\Bbbk: {\rm HH}^*(\Lambda, \Lambda) \to {\rm Ext}^*_\Lambda(\Bbbk, \Bbbk). 
 \]
We regard this as a useful concrete illustration of how one can apply the canonical $A_\infty$-algebra structure on the Yoneda algebra towards determining more classical homological invariants, such as Hochschild cohomology. This philosophy was advocated in \cite{LPWZ0,LPWZ}, but since higher structures of the Yoneda algebras are generally quite hard to compute, not many applications emerged so far. 

Knowledge of the canonical $A_\infty$-algebra structure of $\Ext^*_\Lambda(\Bbbk, \Bbbk)$ is equivalent, by Prout\'e's theorem \cite{Pro11}, to knowing the minimal quasi-free dg algebra resolution $\widetilde{\Lambda} \xrightarrow{\sim} \Lambda$, what is known as the minimal model of $\Lambda$. In the case of monomial algebras, this was done successfully by the third named author, who determined in \cite{Tamaroff} the minimal model of each monomial algebra $\Lambda$ and used it to compute the canonical $A_\infty$-algebra structure on $\Ext^*_\Lambda(\Bbbk, \Bbbk)$. This class of algebras is therefore a natural starting point for exploring the philosophy of~\cite{LPWZ0,LPWZ}. Moreover, as one can approximate general algebras by monomial ones by means of Gr\"obner bases, a thorough understanding of the monomial case should serve as the basis for a more general line of investigation. 

One of the main reasons that various invariants of monomial algebras can be computed explicitly is the extra grading that can be utilised. All important vector space and homology groups associated to a monomial algebra $\Lambda = kQ/I$ have a grading by the category $\mathfrak{C}(Q)$ freely generated by the quiver $Q$. This means that some generally structureless vector space acquire distinguished bases, and thus become naturally identified with their linear duals, leading to elegant formulas that are not available otherwise; in particular, the Tor groups  $\Tor^\Lambda_*(\Bbbk, \Bbbk)$ have combinatorial bases of the so called Anick chains. This was already noted and used in a recent preprint~\cite{Herscovich19}. For us, this observation leads to intricate vanishing patterns for higher structures of Yoneda algebras that can be regarded as analogues of the formulas of He and Lu for higher structures associated to $N$-Koszul algebras \cite{HL05}. Those vanishing patterns are at heart of some of our arguments, as they allow us to produce some explicit $A_\infty$-central elements of Yoneda algebras, arising from what we call ``stable relation cycles''. A slightly weaker notion of stability for relation cycles was introduced by Green, Snashall and Solberg in \cite{GSS06}; our approach to stability is directly motivated by applications to higher structures of the Yoneda algebra.

Our results also allow us to give a combinatorial characterisation of the Gorenstein property for monomial algebras in terms of Anick chains of sufficiently large length. This builds on recent work of Chen, Shen, and Zhou \cite{CSZ} who introduced the notion of perfect paths for monomial algebras in their classification of indecomposable Gorenstein-projective modules. Their work also shows that the minimal resolution of $\Bbbk$ over a Gorenstein monomial algebra is eventually periodic. A consequence of our work is the following reflection of that periodicity property on the level of Hochschild cohomology as follows. 

\begin{thm*}[Th. \ref{periodicHH}] Let $\Lambda$ be a Gorenstein monomial algebra, of Gorenstein dimension $d$. Then Hochschild cohomology is eventually periodic; more precisely, there exists an element $\chi \in {\rm HH}^*(\Lambda, \Lambda)$ of even degree $p$ such that taking cup product with $\chi$ gives an isomorphism 
\begin{align*}
\chi \smile - \colon {\rm HH}^n(\Lambda, \Lambda) \xrightarrow{\cong} {\rm HH}^{n+p}(\Lambda, \Lambda) \quad  \textup{ for all } n \geq d+1. 
\end{align*}
\end{thm*}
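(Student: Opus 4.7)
The argument rests on combining Theorem~\ref{gorensteinfg}, which secures the finite generation conditions ${\bf Fg}$ for $\Lambda$, with the eventual periodicity of the minimal projective resolution of $\Bbbk = \Lambda/\rad\Lambda$ over a Gorenstein monomial algebra, a consequence of the perfect-path classification of Chen--Shen--Zhou~\cite{CSZ} highlighted in the introduction.

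First I would extract a periodicity class in Hochschild cohomology. Set $H := \mathrm{HH}^{2*}(\Lambda, \Lambda)/\sqrt{0}$, a finitely generated graded-commutative $\Bbbk$-algebra by (Fg1). Under (Fg2), the complexity of $\Bbbk$ as a $\Lambda$-module coincides with the Krull dimension of $H$ via the standard support-variety dictionary. Eventual periodicity of the resolution of $\Bbbk$ forces that complexity to equal $1$, so $\dim H = 1$, and Noether normalisation yields an element $\bar\chi \in H$ of positive even degree $p$ over which $H$ is module-finite. Lifting a suitable power of $\bar\chi$ to a class $\chi \in \mathrm{HH}^p(\Lambda, \Lambda)$ makes $\mathrm{HH}^*(\Lambda, \Lambda)$ a finitely generated graded $\Bbbk[\chi]$-module, so cup product with $\chi$ is an isomorphism $\mathrm{HH}^n \to \mathrm{HH}^{n+p}$ for all $n$ sufficiently large.

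To sharpen this eventual isomorphism to the precise threshold $n \geq d+1$, I would consider the distinguished triangle
\[
\Lambda \xrightarrow{\chi} \Lambda[p] \to C \to \Lambda[1]
\]
in $D(\Lambda^e)$. Since $\chi$ acts invertibly on $\mathrm{Ext}^*_\Lambda(\Bbbk, \Bbbk)$ in every sufficiently high degree, the support variety of the cone $C$ lies in the empty locus $V(\bar\chi) \subset \mathrm{Proj}\,H$, forcing $C$ to be perfect as a $\Lambda^e$-complex. The long exact sequence attached to the triangle then reduces the claim to the vanishing $\mathrm{Ext}^n_{\Lambda^e}(\Lambda, C) = 0$ for $n \geq d$, which follows from the Gorenstein hypothesis: perfect bimodule complexes have bounded Hochschild cohomology, with the bound controlled by the one-sided injective dimension $d$ of $\Lambda$.

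The main obstacle is this final sharp cutoff $n \geq d+1$. Producing $\chi$ with eventual periodicity is essentially immediate once ${\bf Fg}$ and the Krull-dimension count are combined, but pinning down the starting degree to $d+1$ requires upgrading the one-sided Gorenstein property of $\Lambda$ to a bimodule-level vanishing statement for perfect complexes; this is where the numerical invariant $d$, rather than merely the finiteness of injective dimension, genuinely enters the argument. The bookkeeping for passing between $H$ and $\mathrm{HH}^*$ modulo nilpotents and for replacing $p$ by an even multiple along the way is then routine.
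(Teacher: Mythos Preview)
Your first phase is a legitimate alternative to the paper's argument: granting Theorem~\ref{gorensteinfg} and the eventual periodicity of the minimal resolution of~$\Bbbk$, the Krull-dimension count and Noether normalisation do produce a class~$\chi$ for which $\chi\smile -$ is an isomorphism on ${\rm HH}^n(\Lambda,\Lambda)$ for all \emph{sufficiently large}~$n$. The paper instead builds~$\chi$ by hand as the sum of periodicity operators attached to stable relation cycles, and proves directly (Proposition~\ref{chiperiodicity}) that this specific~$\chi$ acts as an isomorphism on ${\rm Ext}^n_\Lambda(\Bbbk,\Bbbk)$ for $n\geq d+1$ and as an epimorphism for $n=d$; a Jordan--H\"older filtration of the bimodule~$\Lambda$ together with the $5$-lemma then transfers this sharp range to ${\rm HH}^*(\Lambda,\Lambda)$.

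The genuine gap is in your sharp cutoff. Even granting that the cone~$C$ is perfect over~$\Lambda^{\sf e}$, the vanishing ${\rm Ext}^n_{\Lambda^{\sf e}}(\Lambda,C)=0$ is governed by the injective dimension of $\Lambda^{\sf e}$, which is $2d$, together with the projective amplitude of~$C$ as a bounded complex of $\Lambda^{\sf e}$-projectives; neither quantity is bounded by~$d$. Your assertion that this vanishing is ``controlled by the one-sided injective dimension~$d$'' is exactly the point that needs proof, and it does not follow from the abstract setup: the~$\chi$ you produce by Noether normalisation carries no information about the degree at which it begins to act bijectively on ${\rm Ext}^*_\Lambda(\Bbbk,\Bbbk)$, and without that input the filtration/$5$-lemma machinery cannot be started at $n=d+1$. (Indeed, the paper itself only recovers the comparison ${\rm HH}^n\cong\widehat{{\rm HH}^n}$ for $n\geq 2d+1$ from general Gorenstein considerations; the improvement to $d+1$ comes entirely from the combinatorial Proposition~\ref{chiperiodicity}.) So your outline proves eventual periodicity but not the stated threshold.
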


For a Gorenstein algebra $\Lambda$, we let $\widehat{ {\rm HH}^*}(\Lambda, \Lambda)$ denote its Tate--Hochschild cohomology. Our results imply the following elegant statement. 
\begin{thm*}[Cor. \ref{TateHH}] Let $\Lambda$ be a Gorenstein monomial algebra. Then Tate-Hochschild cohomology is given by periodic Hochschild cohomology:
\begin{align*}
		\widehat{ {\rm HH}^*}(\Lambda, \Lambda) \cong {\rm HH}^*(\Lambda, \Lambda)[\chi^{-1}].	
\end{align*} 
\end{thm*}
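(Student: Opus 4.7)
The plan is to deduce the corollary formally from Theorem \ref{periodicHH} together with the basic properties of Tate--Hochschild cohomology for Gorenstein algebras. Recall that for a Gorenstein algebra $\Lambda$ of Gorenstein dimension $d$, the canonical comparison map
\[
\iota\colon {\rm HH}^*(\Lambda,\Lambda) \longrightarrow \widehat{ {\rm HH}^*}(\Lambda,\Lambda)
\]
is a morphism of graded rings and an isomorphism in degrees $n > d$.

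The main technical point is to verify that $\iota(\chi)$ is a unit in $\widehat{ {\rm HH}^*}(\Lambda,\Lambda)$. The cleanest route is via the chain-level origin of $\chi$: the element is constructed from the eventual periodicity of the minimal bimodule resolution of $\Lambda$ that underlies Theorem \ref{periodicHH}, and this eventual periodicity becomes strict periodicity on the minimal complete bimodule resolution. Lifting $\chi$ to the complete resolution thus produces a chain automorphism shifting by $-p$, and its inverse supplies an explicit two-sided inverse $\chi^{-1} \in \widehat{ {\rm HH}^{-p}}(\Lambda,\Lambda)$. Equivalently, through Buchweitz's identification $\widehat{ {\rm HH}^*}(\Lambda,\Lambda) \cong {\rm Hom}^*_{D_{\rm sg}(\Lambda^e)}(\Lambda,\Lambda)$, it suffices to check that $\chi$ represents a stable isomorphism $\Lambda \xrightarrow{\cong} \Lambda[p]$ in the singularity category of bimodules.

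Once $\iota(\chi)$ is a unit, the universal property of graded Ore localisation yields a ring homomorphism
\[
\bar\iota\colon {\rm HH}^*(\Lambda,\Lambda)[\chi^{-1}] \longrightarrow \widehat{ {\rm HH}^*}(\Lambda,\Lambda),
\]
and the rest is a routine degree chase against the isomorphism range of $\iota$. For surjectivity, given $\alpha \in \widehat{ {\rm HH}^n}(\Lambda,\Lambda)$, pick $k \geq 0$ so that $n+kp > d$; then $\chi^k \smile \alpha \in \widehat{ {\rm HH}^{n+kp}}$ lies in the isomorphism range of $\iota$ and so lifts uniquely to some $\beta \in {\rm HH}^{n+kp}(\Lambda,\Lambda)$, and $\chi^{-k}\beta$ is sent to $\alpha$ by $\bar\iota$. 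For injectivity, any element of the localisation may be written as $\chi^{-k}\beta$ with $\beta \in {\rm HH}^m(\Lambda,\Lambda)$; if $\bar\iota(\chi^{-k}\beta)=0$, then $\iota(\beta)=0$, and multiplying by a sufficient power $\chi^j$ moves $\chi^j \beta$ into the range where $\iota$ is injective, forcing $\chi^j\beta = 0$ in ${\rm HH}^*$ and hence $\chi^{-k}\beta = 0$ in the localisation.

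The main obstacle is the first step, namely confirming the invertibility of $\iota(\chi)$ in Tate--Hochschild cohomology; once this is secured, the identification of the localisation with $\widehat{ {\rm HH}^*}(\Lambda,\Lambda)$ is a formal translation via $\iota$ and its isomorphism range.
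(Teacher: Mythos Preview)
Your overall strategy matches the paper's: establish that $\iota(\chi)$ is a unit in $\widehat{{\rm HH}^*}(\Lambda,\Lambda)$, factor $\iota$ through the localisation, and then compare degrees using the isomorphism range of $\iota$. Two points need correction, however.

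First, the isomorphism range of the comparison map. Since ${\rm HH}^*(\Lambda,\Lambda) = \Ext^*_{\Lambda^{\sf e}}(\Lambda,\Lambda)$ and $\Lambda^{\sf e}$ is Gorenstein of dimension $2d$ (not $d$), Buchweitz's result gives that $\iota$ is an isomorphism only in degrees $n \geq 2d+1$. This does not damage your degree chase---just pick larger powers of $\chi$---but it is the bound the paper actually uses.

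Second, and more substantively, your justification for the invertibility of $\iota(\chi)$ misidentifies its source. You write that $\chi$ ``is constructed from the eventual periodicity of the minimal bimodule resolution of $\Lambda$ that underlies Theorem~\ref{periodicHH}''. But Theorem~\ref{periodicHH} is proved by a Jordan--H\"older filtration argument on bimodule coefficients, and makes no reference to periodicity of the bimodule resolution; that periodicity is established \emph{inside} the proof of Corollary~\ref{TateHH} itself. Concretely: one represents $\chi$ by a chain map $\chi\colon P_{*+p}\to P_*$ on the minimal bimodule resolution, observes that after applying $-\otimes_{\Lambda^{\sf e}} \Lambda^{\sf e}/\rad\Lambda^{\sf e}$ this becomes the map $\chi\cdot-$ on $\Ext^*_\Lambda(\Bbbk,\Bbbk)$, which is an isomorphism in high degrees by Proposition~\ref{chiperiodicity} (not Theorem~\ref{periodicHH}), and then invokes Nakayama to conclude that $\chi\colon P_{n+p}\to P_n$ is an isomorphism for $n\geq d+1$. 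Only then can one build the periodic complete resolution $C_*$ and the explicit inverse $\chi^{-1}$. You correctly flag this step as the main obstacle, but the input needed is Proposition~\ref{chiperiodicity} together with Nakayama, and this Nakayama step should be made explicit.
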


\subsection*{Structure of the paper} 

In Section \ref{sec:TorExt}, we discuss various special features of (co)homology monomial algebras and the related higher structures. In particular, we establish some vanishing patterns for higher structures of Yoneda algebras that are at heart of some of the main results of this paper. We also present a new perspective of some classical results, including the bimodule resolution of Bardzell, and a previously unpublished example of a monomial algebra with highly nontrivial higher structure in its Yoneda algebra. 
In Section \ref{sec:PerfGor}, we discuss the combinatorics of perfect cycles for monomial algebras due to Chen, Shen, and Zhou, and present an elegant combinatorial description of Anick chains for Gorenstein monomial algebras.   
In Section \ref{sec:Ainf}, we recall the {\bf Fg} conditions of Snashall--Solberg, and reinterpret them in terms of higher commutators and $A_\infty$-centres of $\Ext^*_\Lambda(\Bbbk, \Bbbk)$. We use the previously established vanishing patterns of higher structures to obtain a combinatorial criterion for $A_\infty$-centrality in the monomial case. 
In Section \ref{sec:Period}, we attach ``periodicity operators'' in ${\rm Ext}^*_\Lambda(\Bbbk, \Bbbk)$ to any stable relation cycle, prove that periodicity operators are $A_\infty$-central, and present the ring of periodicity operators explicitly.
In Section \ref{sec:main}, we prove the main result of this paper, namely that the monomial algebras satisfying the ${\bf Fg}$ conditions are exactly the Gorenstein ones.
In Section \ref{sec:Further}, we offer a combinatorial characterisation of the ${\bf Fg}$ conditions, and record some interesting results on the structure of Hochschild cohomology.

\subsection*{Conventions and notation} 
We use the language of quivers; a quiver is a directed graph $Q = (Q_0, Q_1, s, t)$ with the set of vertices $Q_0$ and the set of arrows $Q_1$, where for any arrow $\alpha \in Q_1$, we denote by $t(\alpha), s(\alpha) \in Q_0$ its target and source respectively. All quivers are assumed finite and connected. We use the notation $Q_m$ for the set of paths in $Q$ of length $m$, and $\mathcal{P}_Q = Q_0 \cup Q_1 \cup Q_2 \cup \cdots$ for the set of all paths in~$Q$. We call a path in $Q$ non-trivial if it has length at least one. 

All algebras in this article are quotients of the path algebras $kQ:=\mathop{\mathrm{span}}_k\mathcal{P}_Q$, where $k$ denotes the ground field. The product convention for paths in $kQ$ is given by function composition, so that $\alpha \beta = 0$ unless $s(\alpha) = t(\beta)$. We denote by $\mathfrak{m}$ the Jacobson radical $\rad kQ$; it is spanned by all non-trivial paths. Unless specified otherwise, we use the notation $\Lambda = kQ/I$ for a finite dimensional quotient of the path algebra with $I \subseteq \mathfrak{m}^2$ admissible (so that $\mathfrak{m}^n\subseteq I$ for some $n$; this is automatic in the case of finite dimensional monomial algebras). We let $\Bbbk := kQ_0$ denote the corresponding semisimple $k$-algebra, and use the notation $\mathfrak{r} = \rad\Lambda$ for the Jacobson radical of $\Lambda$. This allows us treat $\Lambda$ as an augmented algebra over $\Bbbk$ with augmentation $\varepsilon: \Lambda \twoheadrightarrow \Lambda/\mathfrak{r} = \Bbbk$. All unadorned tensors are over $\Bbbk$. 

All modules over finite dimensional algebras are taken to be right modules and finitely generated; we simply call them finite. Elements of graded algebras and modules are always taken to be homogeneous. We refer to the internal grading of algebras and modules as the weight grading. The (co)homology groups of graded algebras are naturally bi-graded; one grading is given by the weight, and the other by (co)homological degree, which we simply refer to as degree. The weight and degree of an element $x$ are denoted by $\wt(x)$ and $|x|$ respectively. 

Unless otherwise specified, we focus on the case of monomial algebras with ideal of relations generated by a set of paths which we assume minimal with respect to inclusion of paths.

When dealing with some formulas for homology, we find it useful to utilise the standard convention for homological suspension, implementing it via a formal symbol $s$ of homological degree~$1$.

\subsection*{Acknowledgments} The second named author is supported by Simons Foundation (through a postdoctoral fellowship at Hamilton Mathematics Institute). We are grateful to Ben Briggs for useful discussions. A crucial thank you is due to Joe Chuang and Alastair King who kindly shared with us their unpublished manuscript from fifteen years ago that highlights the importance of previous work of Gruenberg \cite{Gruenberg68} for computing higher structures on Ext algebras of monomial algebras; the beautiful example of Section \ref{sec:ExampleCK} is also coming from their unpublished note (and is reproduced here with their permission). Their work on higher structures ultimately led to a paper on functorial non-minimal resolutions of general associative algebras \cite{CK}, and their ideas related to the specific case of monomial algebras never made it to that paper. However, it was of utmost importance for the genesis of this paper, and we cannot thank them enough for sharing their work with us. 

\section{\texorpdfstring{$\Tor$}{Tor} and \texorpdfstring{$\Ext$}{Ext} of monomial algebras, and their higher structures}\label{sec:TorExt}

\subsection{Two formulas for Tor groups}  

In general, for an augmented algebra $\Lambda$, the Tor groups $\Tor_\bullet^\Lambda(\Bbbk,\Bbbk)$ of the trivial module can be computed as the homology of the bar construction $B_\bullet(\Lambda)=\{ (s\mathfrak{r})^{\otimes n} , d \}$. This formula is useful theoretically but in practice has a lot of limitations: the underlying space of bar construction is too big. In this section, we shall discuss two formulas that produce the Tor groups on the nose; one is valid for any augmented algebra $\Lambda$ presented by generators and relations, and the other only makes sense for monomial algebras. A few of the results of this paper arise from comparing those two formulas. To the best of our knowledge, this has not been done explicitly before; the only sources we know where the two formulas appear at the same time are the article \cite{BK} by Butler and King, and the survey of Ufnarovski~\cite{Ufn95}. 

The first formula for the $\Tor$ groups is as follows.
\begin{thm}\label{th:Govorov}
Let $\Lambda = kQ/I$ with $I \subseteq \mathfrak{m}^2$. 
For $p\in\mathbb N$ we have that
\begin{gather}
\Tor_{2p}^\Lambda(\Bbbk,\Bbbk)\cong \dfrac{I^p\cap \mathfrak{m} I^{p-1}\mathfrak{m}}{I^p\mathfrak{m}+\mathfrak{m} I^p}, \label{eq:Gov1}\\
\Tor_{2p+1}^\Lambda(\Bbbk,\Bbbk)\cong \dfrac{I^p\mathfrak{m}\cap \mathfrak{m}I^p}{I^{p+1}+\mathfrak{m} I^p\mathfrak{m}}. \label{eq:Gov2}
\end{gather}
\end{thm}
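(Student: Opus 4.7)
The plan is to prove the formula by constructing an explicit minimal projective resolution $P_\bullet = \Lambda\otimes_\Bbbk V_\bullet \to \Bbbk$ of the trivial left $\Lambda$-module, with each $V_n$ identified with the claimed subquotient of $kQ$; by minimality, $\Bbbk\otimes_\Lambda d_n = 0$ for every $n$, and so $\Tor_n^\Lambda(\Bbbk,\Bbbk) \cong \Bbbk\otimes_\Lambda P_n = V_n$.

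For the low-degree cases I would exploit the fact that $\mathfrak{m}$ is free as a right (respectively left) $kQ$-module on the arrows: there are $\Bbbk$-bimodule isomorphisms $kQ\otimes_\Bbbk kQ_1\xrightarrow{\sim}\mathfrak{m}$ via $x\otimes\alpha\mapsto x\alpha$, and symmetrically $kQ_1\otimes_\Bbbk kQ\xrightarrow{\sim}\mathfrak{m}$ via $\alpha\otimes x\mapsto\alpha x$, coming from the fact that every nontrivial path has a unique last, respectively first, arrow. Under the first of these, $P_1 := \Lambda\otimes_\Bbbk kQ_1 \cong \mathfrak{m}/I\mathfrak{m}$, the differential $d_1 : P_1 \to P_0 = \Lambda$ is induced by the inclusion $\mathfrak{m} \hookrightarrow kQ$, and $\ker d_1 \cong I/I\mathfrak{m}$. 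Since $\mathfrak{r}\cdot(I/I\mathfrak{m}) = (\mathfrak{m}I+I\mathfrak{m})/I\mathfrak{m}$, Nakayama's lemma picks out the minimal generators $V_2 = I/(I\mathfrak{m}+\mathfrak{m}I)$, which matches the $p=1$ case of the even formula (using $I\cap\mathfrak{m}^2 = I$ since $I\subseteq\mathfrak{m}^2$).

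The general inductive step then alternates the use of the left and right free-module decompositions of $\mathfrak{m}$. Inductively, suppose the $(n-1)$-st syzygy module has been identified with the claimed subquotient. At each odd step $n = 2p+1$, lifting syzygies naturally parameterizes them by elements of $kQ$ that admit simultaneous factorizations with an arrow on the right (placing them in $I^p\mathfrak{m}$) and with an arrow on the left (placing them in $\mathfrak{m}I^p$), producing the numerator $I^p\mathfrak{m}\cap\mathfrak{m}I^p$; the image of $\mathfrak{r}$ acting on the previous syzygy yields the denominator $I^{p+1}+\mathfrak{m}I^p\mathfrak{m}$. A symmetric analysis at each even step $n = 2p$ gives the numerator $I^p\cap\mathfrak{m}I^{p-1}\mathfrak{m}$ and denominator $I^p\mathfrak{m}+\mathfrak{m}I^p$. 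Equivalently, one can assemble all these data into a Gruenberg-type resolution, built from short exact sequences of $kQ$-bimodules coming from the two-sided filtration of $kQ$ by powers of $I$ and $\mathfrak{m}$, apply $\Lambda\otimes_{kQ}-$ to descend to a $\Lambda$-resolution, and then extract the minimal resolution.

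The main obstacle is verifying this inductive step: one must show that the syzygies and their images under $\mathfrak{r}$ are exactly these intersections and sums, neither larger nor smaller. This is where the left-right symmetric free-module structure of $\mathfrak{m}$ over $kQ$ enters essentially, combined with Nakayama's lemma to identify minimal generating sets. Once this symmetry is leveraged correctly, the alternating intersection-plus-sum pattern characteristic of the Govorov formulas emerges from the induction.
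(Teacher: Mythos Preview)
The paper does not actually prove this theorem: it is stated as a known result and attributed to Gruenberg \cite{Gruenberg68}, Govorov \cite{Govorov73}, and (for quiver algebras) Bongartz \cite{Bongartz83}. So there is no ``paper's own proof'' to compare against.

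Your outline is the standard Gruenberg-style argument, and the low-degree computations you give ($P_1\cong\mathfrak{m}/I\mathfrak{m}$, $\ker d_1\cong I/I\mathfrak{m}$, $V_2\cong I/(I\mathfrak{m}+\mathfrak{m}I)$) are correct. The strategy of alternating the left and right free decompositions $kQ\otimes_\Bbbk kQ_1\cong\mathfrak{m}\cong kQ_1\otimes_\Bbbk kQ$ to identify successive syzygies is exactly how the Gruenberg resolution is built. You are also right that the crux is the inductive verification: one must check that the $n$-th syzygy, viewed inside the appropriate quotient of $kQ$, is precisely $(I^p\mathfrak{m}\cap\mathfrak{m}I^p)/(I^{p+1}+\mathfrak{m}I^p\mathfrak{m})$ (odd case) or $(I^p\cap\mathfrak{m}I^{p-1}\mathfrak{m})/(I^p\mathfrak{m}+\mathfrak{m}I^p)$ (even case), neither more nor less. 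Your sketch gestures at this but does not carry it out; what is written is a correct plan rather than a proof. If you want to fill the gap, the cleanest route is to write down the short exact sequences of $kQ$-bimodules
\[
0\to I^p/I^{p+1}\to \mathfrak{m}I^{p-1}/\mathfrak{m}I^p\to \mathfrak{m}I^{p-1}/I^p\to 0,\qquad
0\to \mathfrak{m}I^p/\mathfrak{m}I^{p+1}\to I^p/I^{p+1}\to I^p/\mathfrak{m}I^p\to 0,
\]
observe that the middle terms are free left $\Lambda$-modules (via the freeness of $\mathfrak{m}$ and of $I^p/I^{p+1}$ over $kQ$), splice them, and then extract the minimal resolution by quotienting out $\mathfrak{r}\cdot(\text{syzygy})$ at each stage.
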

Similar formulas were first discovered in the case of integral homology of groups presented by generators and relations by Gruenberg~\cite{Gruenberg68}, and then established in the case of associative algebras over a field by Govorov~\cite{Govorov73}. The work of Govorov remained largely unnoticed, and it seems that the first prominent appearance of this result for quiver algebras is in a paper of Bongartz~\cite{Bongartz83} who refers to a private communication from Butler inspired by work of Gruenberg. We shall refer to these formulas for $\Tor$-groups as the Govorov--Gruenberg formulas.    

The Govorov--Gruenberg formulas are way too general to be of computational relevance for arbitrary algebras. However, in the case of monomial algebras they can be made completely combinatorial, if we note that all the vector spaces in those formulas have compatible combinatorial bases, e.g. the vector space $\mathfrak{m}I^p$ has a basis of paths obtained as concatenations of a nontrivial path with a path containing at least $p$ disjoint occurrences of the monomial relations. Once those compatible bases are identified, all intersections and quotients are computed on the level of combinatorial bases by taking the intersections and set-theoretic differences respectively.  

The second formula for $\Tor$-groups which is only valid for algebras with monomial relations utilises combinatorics of the so called \emph{Anick chains}. We shall use that terminology, even though it is inaccurate historically; in the case of algebras with monomial relations that combinatorics seems to have been first discovered by Green, Happel and Zacharia~\cite{GHZ85}, but is better known from works of Anick \cite{Anick86} and Anick and Green \cite{AG87} where the case of monomial algebras was upgraded to the case of augmented algebras with a Gr\"obner basis of relations. 

Let us recall the definition of the set $C_n$ of (right) Anick $n$-chains and of a tail of an Anick chain; this is done by induction on $n\ge 0$. We let $C_0 = Q_1$ be the set of arrows, and any $a \in C_0$ is its own tail. Suppose that $C_{n-1}$ is defined, as well as tails of $(n-1)$-chains. An $n$-chain is a path $\upgamma$ in $\mathcal{P}_Q$ such that
\begin{enumerate}[(i)]
\item we can write $\upgamma = \upgamma't$ with $\upgamma' \in C_{n-1}$,
\item if $t'$ is the tail of $\upgamma'$, then $t't$ has a right divisor which is a monomial relation, 
\item no proper left divisor of $\upgamma$ satisfies both (i) and (ii). 
\end{enumerate}
By definition, the tail of $\upgamma$ is the path $t_\upgamma := t$. Note that $C_1$ is the set of the defining relations of $\Lambda$, with the tail of each monomial relation given by the path obtained after removing the leftmost arrow in it. It is shown in \cite{Anick86,AG87} that a path $\upgamma$ admits at most one structure of a chain: if $\upgamma$ is an $n$-chain with tail $t_\upgamma$, then $n$ and $t_\upgamma$ are uniquely determined. 

The following result is essentially due to Green, Happel, and Zacharia~\cite{GHZ85}.
\begin{thm}\label{th:AnickTor}
For each $n\in\mathbb N$ we have isomorphisms
 \[
\Tor_n^\Lambda(\Bbbk,\Bbbk)\cong kC_{n-1} .
 \]
\end{thm}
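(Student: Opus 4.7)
The plan is to construct an explicit minimal projective resolution of $\Bbbk$ as a right $\Lambda$-module whose term in homological degree $n$ has basis $C_{n-1}$, so that tensoring with $\Bbbk$ over $\Lambda$ yields the claim directly by minimality. This is Anick's resolution, specialised to the monomial case. Set $P_0 := \Lambda$ and, for $n \geq 1$,
\[
P_n := \Bbbk C_{n-1} \otimes_\Bbbk \Lambda,
\]
where each chain $\upgamma$ carries a $\Bbbk$-bimodule structure via its endpoints. Using the unique decomposition $\upgamma = \upgamma' t_\upgamma$ of an $(n-1)$-chain with $\upgamma' \in C_{n-2}$, define, for $n \geq 2$,
\[
d_n(\upgamma \otimes 1) := \upgamma' \otimes \overline{t_\upgamma},
\]
extended $\Lambda$-linearly, where $\overline{t_\upgamma} \in \Lambda$ denotes the class of the path $t_\upgamma$. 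The differential $d_1 \colon P_1 \to P_0$ sends an arrow to itself, and $P_0 \twoheadrightarrow \Bbbk$ is the canonical augmentation.

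The identity $d^2 = 0$ is an immediate consequence of axiom (ii): writing $\upgamma' = \upgamma'' t_{\upgamma'}$, one computes
\[
d_{n-1} d_n(\upgamma \otimes 1) = \upgamma'' \otimes \overline{t_{\upgamma'} t_\upgamma},
\]
and since $t_{\upgamma'} t_\upgamma$ has a right divisor which is a monomial relation, this path lies in $I$ and its class in $\Lambda$ vanishes. For exactness of $P_\bullet \to \Bbbk$, the classical route is to construct an explicit $\Bbbk$-linear contracting homotopy $h_n \colon P_n \to P_{n+1}$. Given a basis tensor $\upgamma \otimes w$ with $w$ a basis path of $\Lambda$ (i.e.\ a path with no monomial relation as a subpath), one inspects the concatenation $\upgamma \cdot w$ and exploits the fact that in the monomial setting every path admits at most one chain decomposition; one defines $h_n(\upgamma \otimes w) := \upgamma'' \otimes w''$ where $\upgamma \cdot w = \upgamma'' \cdot w''$ realises the longest left divisor of $\upgamma \cdot w$ carrying the structure of an $(n+1)$-chain, setting $h_n(\upgamma \otimes w) := 0$ if no such chain extension exists. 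A short case analysis on whether the tail of $\upgamma$ combined with the initial arrows of $w$ creates a new relation then yields $dh + hd = \id$ on $P_n$.

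Since every tail $t_\upgamma$ of an $n$-chain with $n \geq 1$ is a non-trivial path, $\overline{t_\upgamma} \in \mathfrak{r}$, so each differential sends generators into $P_{n-1} \cdot \mathfrak{r}$; the resolution is therefore minimal, and tensoring with $\Bbbk$ over $\Lambda$ annihilates all boundary maps, giving $\Tor_n^\Lambda(\Bbbk, \Bbbk) = \Bbbk C_{n-1}$. The main obstacle is the construction and verification of the contracting homotopy: although the monomial setting is much simpler than the general Anick--Green framework, where Gr\"obner normal forms intervene, correctly treating the boundary case where $w$ already completes a new relation together with the tail of $\upgamma$ still requires combinatorial care. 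An alternative route, in line with the philosophy of this paper, is to use the Govorov--Gruenberg formulas of Theorem \ref{th:Govorov}: the intersections and quotients appearing there admit compatible combinatorial bases in the monomial case, and one may hope to identify these directly with Anick chains, at the cost of treating even and odd degrees separately.
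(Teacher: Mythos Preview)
Your proposal is correct and follows exactly the approach the paper sketches: construct the Anick resolution $P_n = kC_{n-1}\otimes_\Bbbk \Lambda$ with differential $d(\upgamma\otimes 1)=\upgamma'\otimes t_\upgamma$, verify it is a minimal projective resolution, and then tensor with $\Bbbk$. The paper does not give a self-contained proof but defers to \cite{GHZ85,Anick86,AG87}; you have simply filled in the standard details (the $d^2=0$ check via axiom~(ii), the contracting homotopy, and the observation that tails lie in $\mathfrak{r}$), which is entirely in line with what is expected here.
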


To prove this result, one uses the combinatorial definition above to prove a slightly stronger statement: the minimal free right module resolution of $\Bbbk$ over $\Lambda$ can be proved to have the form 
\begin{align}
\cdots \to	kC_n \otimes_\Bbbk \Lambda \xrightarrow{d} kC_{n-1} \otimes_\Bbbk \Lambda \xrightarrow{d} \cdots \xrightarrow{d} k C_{1} \otimes_\Bbbk \Lambda \xrightarrow{d} kC_0 \otimes_\Bbbk \Lambda \to \Lambda \to 0 
\end{align}
with $d(\upgamma \otimes 1) = \upgamma'\otimes t_\upgamma$. (Of course applying the functor $-\otimes_\Lambda\Bbbk$ to this resolution, we immediately see that $\Tor_n^\Lambda(\Bbbk,\Bbbk)\cong kC_{n-1}$, as required.) 

Similarly, one can define the set of left Anick chains $C'_n$ by considering \emph{heads} instead of tails, and thus obtain the minimal free left module resolution of $\Bbbk$ over $\Lambda$. Bardzell \cite{Bardzell97} proved that $C'_n = C_n$ for all $n \geq 0$, and so for any $n$-chain $\upgamma$, we can write $\upgamma = \upgamma't_\upgamma = h_\upgamma \upgamma''$ for unique $(n-1)$-chains $\upgamma', \upgamma''$, with tail $t_\upgamma$ and head $h_\upgamma$. Another explanation of this symmetry from the point of view of the Govorov--Gruenberg formulas was given by Butler and King, see \cite[Th.~8.2, Remark 8.3]{BK}.

At this point, it is appropriate to make a remark on how the extra grading mentioned earlier can be utilised. Since the path algebra $kQ$, as well as its ideals generated by monomials, is graded by the category $\mathfrak{C}(Q)$ generated by $Q$, that grading descends to all monomial quotients of $kQ$, and to bar constructions of those quotients, where the boundary map manifestly respects the grading. Thus, every group $\Tor_n^\Lambda(\Bbbk,\Bbbk)$ is graded by $\mathfrak{C}(Q)$. From the description of Tor via Anick chains, it is clear that for every path $p\in\mathcal{P}_Q$, the $p$-graded component of Tor is of dimension at most one (and when it is of dimension one, the corresponding one-dimensional space has the Anick chain $p$ as its basis). Since the Tor groups (together with their $\mathfrak{C}(Q)$-gradings) are well defined, all the descriptions we outlined (via the bar construction, of Govorov and Gruenberg, and of Green--Happel--Zacharia) must give the same result. The first immediate consequence of that is the result of Bardzell on the coincidence of left and right Anick chains mentioned in the previous paragraph: both of those index nonzero graded components in the homology of the bar construction. Another interesting observation is that the above property of Tor stating that for each path $p\in\mathcal{P}_Q$, the $p$-graded component of Tor is of dimension at most one is precisely the key result of the paper of Iyudu \cite{Iyudu} who suggests that it was conjectured by Kontsevich in 2015. 

\subsection{Higher structures of Tor and Ext}

Recall that an $A_\infty$-algebra over $\Bbbk$ is a graded $\Bbbk$-bimodule $E$ equipped with multilinear operations $m_n\colon E^{\otimes n} \to E$ of degree $2-n$ satisfying the Stasheff identity 
\begin{align}
	\sum_{r+s+t=n} (-1)^{rs+t} m_{r+1+t} \circ ({\rm id}^{\otimes r} \otimes m_s \otimes {\rm id}^{\otimes t}) = 0
\end{align}
for each $n\ge1$. These identities imply that $m_1$ is a differential for $E$ and that $m_2$ is associative up to coboundary given by $m_3$, and we call $(E, \{ m_n\})$ minimal when $m_1 = 0$. In this case, $(E, m_2)$ is an honest graded augmented $\Bbbk$-algebra with some extra data given by the higher products $\{ m_n \}_{n \geq 3}$. We refer the reader to \cite[Chapter 9]{LV12} for further information on $A_\infty$-algebras. 

The $A_\infty$-algebra $E$ is said to be strictly unital if it contains an element $1$ for which $$m_2(1, x) = x = m_2(x, 1) \text{ and }  m_{n}(\dots, 1, \dots) = 0 \text{ for } n \geq 3. $$ The $A_\infty$-algebra $E$ is augmented over $\Bbbk$ if it is equipped with degree zero morphisms $\varepsilon \colon E \leftrightarrows \Bbbk \colon \eta$ satisfying $\varepsilon \eta =\id$, so that it is strictly unital for $1:=\eta(1)$, and the products preserve the augmentation ideal $\overline{E} := {\rm ker}(\varepsilon)$. We always take $E$ to be augmented. 

In particular, the Yoneda algebra $E = \Ext^*_\Lambda(\Bbbk, \Bbbk)$ of any augmented dg $\Bbbk$-algebra admits a minimal augmented $A_\infty$-algebra structure making it $A_\infty$-equivalent to the dg algebra ${\rm RHom}_\Lambda(\Bbbk, \Bbbk)$. Such a structure is unique up to $A_\infty$-isomorphism; it recovers $\Lambda$ up to an $A_\infty$-quasi-isomorphism. For the rest of this article $\Ext^*_\Lambda(\Bbbk, \Bbbk)$ will always be considered equipped with such an $A_\infty$-structure, which we shall call canonical. (Dually, there is a notion of an $A_\infty$-coalgebra; for each augmented $\Bbbk$-algebra $\Lambda$, the graded space $\Tor_*^\Lambda(\Bbbk,\Bbbk)$ has an $A_\infty$-coalgebra structure which is $A_\infty$-equivalent to the bar construction $B_*(\Lambda)$, regarded as a dg coalgebra with the deconcatenation coproduct.)

For a monomial algebra $\Lambda$, the canonical $A_\infty$-structure on $\Ext$ was determined by the third named author in \cite{Tamaroff}; this result generalises the classical formula of Green and Zacharia \cite{GZ94} for the Yoneda product. By Theorem~\ref{th:AnickTor}, if we denote by $(-)^{\vee}$ the $\Bbbk$-dual, we have identifications $${\rm Ext}^*_\Lambda(\Bbbk, \Bbbk) = (\Bbbk \oplus kC_{*-1})^{\vee} = \Bbbk \oplus kC_{*-1}^\vee,$$ where we set $C_{-1} = \emptyset$ and moreover abuse notation and let $C_r^\vee \subseteq \mathcal{P}_{Q^{op}}$ consist of the paths in $C_r$ with opposite orientation. More generally we identify the sets of paths $\mathcal{P}_Q$ and $\mathcal{P}_{Q^{op}}$ via the notation $p \leftrightarrow p^{\vee}$ which reverses orientation, so that $(pq)^\vee = q^\vee p^\vee$.  

Consider the Yoneda algebra $E = \Ext^*_\Lambda(\Bbbk, \Bbbk) = \Bbbk \oplus kC_{*-1}^{\vee}$. For each $n \geq 2$, let us define morphisms $m_n\colon E^{\otimes n} \to E$ on (duals of) Anick chains $\upgamma_i \in C_{r_i}^\vee$ by 
\begin{align}\label{higherproducts}
		m_n(\upgamma_1, \dots, \upgamma_n) = \begin{cases} (-1)^N \upgamma_1 \dots \upgamma_n & \textup{ if } \upgamma_1 \dots \upgamma_n \in C^\vee_{r_1 + \dots + r_n + 1}, \\ 
		0 & \textup{ else,}\end{cases} 
\end{align}
where $N = \sum_{i<j} r_i(r_j+1) + r_1 + \sum_{i} r_i$. 
	
\begin{thm}\label{TamaroffThm} 
These maps model the canonical $A_\infty$-algebra structure on ${\rm Ext}^*_\Lambda(\Bbbk, \Bbbk)$. 
\end{thm}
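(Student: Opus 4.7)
The plan is to realise the canonical $A_\infty$-algebra structure on $\Ext^*_\Lambda(\Bbbk,\Bbbk)$ as the transfer of the dg coalgebra structure on the bar construction $B_\bullet(\Lambda)$ to its homology $\Tor_*^\Lambda(\Bbbk,\Bbbk)$, and then dualise. Concretely, the aim is to build an explicit strong deformation retract $(p,i,h)$ from $B_\bullet(\Lambda)$ onto the small subcomplex spanned by Anick chains, and then invoke the tree formulas for homotopy transfer to read off the transferred operations.

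First I would fix the contraction. The inclusion $i$ sends a chain $\upgamma = a_0 a_1 \dotsm a_n$ to the tensor $sa_0 \otimes sa_1 \otimes \dotsb \otimes sa_n$ of its successive arrows, which realises the minimal resolution of $\Bbbk$ as a direct summand of the bar construction. The projection $p$ extracts the coefficient of each such arrow-by-arrow Anick tensor, and the homotopy $h$ is built combinatorially by sliding arrows between adjacent tensor slots until the leftmost prefix that forms an Anick chain is reached, in the spirit of Anick's resolution. That $(p,i,h)$ is a well-defined contraction follows from Theorem~\ref{th:AnickTor}, since the Anick complex is already identified there as the minimal resolution of $\Bbbk$.

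Next I would apply homotopy transfer. The standard Merkulov/Kadeishvili formulas express the transferred coproducts $\Delta_n$ as sums, indexed by planar rooted trees with $n$ leaves, of compositions of $i$, the deconcatenation coproduct on $B_\bullet(\Lambda)$, and $h$, followed by $p$ at the root. In the monomial setting these sums collapse drastically: applying $h$ to a deconcatenation summand either produces a single Anick tensor (which is then preserved by every subsequent step) or is already in the image of $p$, because the Anick structure on a path is unique whenever it exists. A combinatorial check using the defining conditions (i)--(iii) of chains shows that only one tree contributes nontrivially, and that the contribution is nonzero precisely when the concatenation $\upgamma_1 \dotsm \upgamma_n$ is itself an Anick chain in $C_{r_1+\dots+r_n+1}$. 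Dualising gives products $m_n$ on $\Ext^*_\Lambda(\Bbbk,\Bbbk)$ with the concatenation formula displayed in~\eqref{higherproducts}.

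The sign $N = \sum_{i<j} r_i(r_j+1) + r_1 + \sum_i r_i$ requires the most care, and this is where I expect the main obstacle to lie. The signs compose three contributions: the Koszul sign rule when commuting the formal suspension $s$ past Anick chains of degrees $r_i$, the standard conventions governing the Stasheff identity and the algebra/coalgebra duality between $B_\bullet(\Lambda)$ and its linear dual, and the sign in the bar differential. I would pin these down by checking low arities against the classical Yoneda product of Green--Zacharia \cite{GZ94} for $n=2$, and then verifying the Stasheff identity directly on concatenations of Anick chains: the identity reduces to a purely combinatorial cancellation, since at most one summand in each Stasheff sum is a chain, and this cancellation forces the stated formula for $N$. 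The combinatorial vanishing of the transferred products is essentially immediate once the contraction is set up; it is the Koszul bookkeeping, weaving together the shift between Anick-chain length and Ext degree, the suspension conventions in $B_\bullet(\Lambda)$, and the dualisation, that constitutes the genuine technical work.
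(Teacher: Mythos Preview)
Your approach is essentially the one the paper takes, which in turn defers to \cite[Th.~4.2]{Tamaroff}: build a contraction from the bar construction onto the Anick subcomplex and apply homotopy transfer. The paper's proof is a one-sentence reference to that result, noting only the switch from right to left Anick chains.

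The one place your sketch is vaguer than it should be is the claim that ``only one tree contributes nontrivially''. The paper is specific: the surviving tree is the \emph{left comb} (as opposed to the right comb in \cite{Tamaroff}), and this identification is what makes the sign bookkeeping clean --- the paper remarks that for the left comb ``no extra signs arise''. Your plan to pin down $N$ by checking low arities and then verifying the Stasheff identity by hand would work but is more laborious than necessary: once you know only the left comb survives, $N$ drops out directly from the Koszul rule along that single tree. The reason only combs survive hinges on the side conditions satisfied by the Anick homotopy (in particular $ph=0$ and $h^2=0$), so in a complete writeup you would want to state $h$ precisely rather than gloss it as ``sliding arrows''.
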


\begin{proof}
This is analogous to \cite[Th.~4.2]{Tamaroff} but uses left Anick chains instead of right ones; one just has to remark that in this case the only non-vanishing tree appearing in the homotopy transfer formula for the higher products is the left comb, and that no extra signs arise, as opposed to what happens for the right comb.
\end{proof}

\subsection{Example of non-vanishing higher products}\label{sec:ExampleCK}

At this point we already have enough information to present an interesting example due to Chuang and King of a very simple monomial quiver algebra which however has highly nontrivial higher structure on its Yoneda algebra. The only available source in the literature where an example of a similar nature appears is a paper of Conner and Goetz \cite{CG}; however, in that case the authors show that \emph{a certain} canonical $A_\infty$-structure is highly nontrivial, and do not establish that it holds for \emph{every} canonical $A_\infty$-structure. Hence we believe that this example should be of interest to experts. 

Let us consider the cyclic quiver $C_5$ with the arrows $a,b,c,d,e$ in the cyclic order.
\begin{wrapfigure}{r}{0.27\textwidth}
\vspace{-1 em}
\begin{tikzpicture}[auto, scale = 0.6]
    \foreach \a in {1,2,3,4,5}
    {
        \node (u\a) at ({\a*72}:2){$\bullet$};
        \draw [latex-,
        		line width = 1.15 pt,
        		domain=\a*72-65:\a*72-10] plot ({2*cos(\x)}, {2*sin(\x)});}
    \node (a1) at ({1*72+35}:2.5){$e$};
    \node (a2) at ({2*72+35}:2.5){$a$};
    \node (a3) at ({3*72+35}:2.5){$b$};
    \node (a4) at ({4*72+35}:2.5){$c$};
    \node (a5) at ({5*72+35}:2.5){$d$};  
\end{tikzpicture}
\vspace{-2 em}
\end{wrapfigure}
The algebra $CK_5$ is defined by generators and relations as $kC_5/(abcd,bcde,deab)$, which is a finite-dimensional algebra. We will prove that a canonical $A_\infty$-structure on its Yoneda Ext-algebra cannot be ``too simple''. 

\begin{thm}
The Yoneda algebra $B=\Ext^*_{CK_5}(\Bbbk,\Bbbk)$ has infinitely many non-vanishing higher products, for any canonical $A_\infty$-structure. 
\end{thm}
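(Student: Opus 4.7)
The plan is to use Theorem~\ref{TamaroffThm} together with the combinatorics of Anick chains to produce, for every $n\ge 3$, explicit homogeneous inputs on which the canonical higher product $m_n$ must be non-zero. I would first compute the initial Anick chains of $CK_5$: $C_0=\{a,b,c,d,e\}$, $C_1=\{abcd,bcde,deab\}$, and by direct inspection $C_2=\{abcde,bcdeab,deabcd\}$. The key combinatorial claim is that the path $(abcde)^k$ is an Anick chain of type $2k$ for every $k\ge 1$. This is proved by induction on $k$ that alternates two minimal one-step extensions: from a chain with tail $e$ one appends $abcd$, the overlap $e\cdot abcd=eabcd$ having $abcd$ as a right divisor, giving a chain one step higher with tail $abcd$; from a chain with tail $abcd$ one appends $e$, the overlap $abcd\cdot e=abcde$ having $bcde$ as a right divisor, giving a chain one step higher with tail $e$. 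Each extension is minimal, so condition~(iii) of the chain definition holds automatically, and the induction yields $(abcde)^k\in C_{2k}$.

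Next I would exploit the tautological but crucial factorization
\[
(abcde)^k \;=\; \underbrace{(abcde)(abcde)\cdots(abcde)}_{k-1\text{ factors}}\cdot abcd\cdot e
\]
of this chain as a literal concatenation of $k+1$ Anick chains of Anick degrees $2,\dots,2,1,0$. The identity $2(k-1)+1+0+1=2k$ matches the type of $(abcde)^k$, so the hypothesis of formula~\eqref{higherproducts} is met. Theorem~\ref{TamaroffThm} then forces
\[
m_{k+1}\bigl(e^\vee,(abcd)^\vee,(abcde)^\vee,\dots,(abcde)^\vee\bigr) \;=\; \pm \bigl((abcde)^k\bigr)^\vee \;\neq\; 0
\]
for every $k\ge 2$ in the canonical $A_\infty$-structure defined by that formula. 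This already shows that infinitely many higher products are non-zero in one explicit canonical model.

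Finally, to promote the conclusion to \emph{every} canonical $A_\infty$-structure on $B$, I would combine the uniqueness of the canonical structure up to $A_\infty$-isomorphism with the rigidity of the $\mathfrak{C}(Q)$-grading on $B$, each of whose homogeneous components is at most one-dimensional. The output $\bigl((abcde)^k\bigr)^\vee$ lives in a one-dimensional graded piece, so the value of $m_{k+1}$ on these homogeneous inputs represents a well-defined higher Massey-type class in that one-dimensional component; being non-zero in the model of formula~\eqref{higherproducts}, this class is non-zero and, as a quasi-isomorphism invariant, remains non-zero in every canonical model up to a non-zero scalar coming from the action of $f_1$ on the one-dimensional graded pieces. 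The main obstacle is exactly this last invariance step: the $A_\infty$-isomorphism relations at level $k+1$ involve correction terms built from the higher morphisms $f_j$ and lower products, and one must verify carefully that the at-most-one-dimensional $\mathfrak{C}(Q)$-graded components prevent these corrections from conspiring to cancel the leading contribution.
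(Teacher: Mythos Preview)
Your computation that $(abcde)^k\in C_{2k}$ and that Tamaroff's model therefore has $m_{k+1}\bigl(e^\vee,(abcd)^\vee,(abcde)^\vee,\dots,(abcde)^\vee\bigr)\neq 0$ is correct. The difficulty is entirely in the invariance step, and here your chosen tensor creates a concrete obstruction, not just a technical gap.

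The problem is that $m_2\bigl(e^\vee,(abcd)^\vee\bigr)=\pm(abcde)^\vee\neq 0$: the path $abcd\cdot e=abcde$ is a $2$-chain. Once a consecutive pair in your tensor has non-vanishing $m_2$, there is no ``well-defined higher Massey-type class'' to invoke --- the Massey product $\langle e^\vee,(abcd)^\vee,\dots\rangle$ is not even defined. More concretely, in the $A_\infty$-morphism identity
\[
\sum_{i+j+l=k+1}\pm f_{i+1+l}\bigl(\id^{\otimes i}\otimes \mu_j\otimes \id^{\otimes l}\bigr)
=\sum_{i_1+\cdots+i_r=k+1}\pm \nu_r\bigl(f_{i_1}\otimes\cdots\otimes f_{i_r}\bigr),
\]
applying $\mu_2$ to the first two slots of your tensor produces the non-zero term
$\pm f_{k}\bigl((abcde)^\vee,\dots,(abcde)^\vee\bigr)$, and this lands in precisely the one-dimensional graded component spanned by $\bigl((abcde)^k\bigr)^\vee$ (the weight is $(abcde)^k$ and the degree is $3k+(1-k)=2k+1$). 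So the one-dimensionality of the graded pieces does not rescue you: it is exactly what allows this correction term to interfere with the leading term $f_1\mu_{k+1}$.

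The paper sidesteps this by choosing instead $\omega_n=d^\vee\otimes e^\vee\otimes((abcde)^\vee)^{\otimes(n-4)}\otimes a^\vee\otimes b^\vee$, whose concatenation $de(abcde)^{n-4}ab$ is also a chain, but which has the crucial property that \emph{every} $\mu_j$ vanishes on \emph{every} proper connected subtensor (for instance $ed$, $ba$, $a\cdot abcde$, $abcde\cdot e$ are not even paths in $Q$, and $(abcde)^2$ lies in $C_4$, not $C_5$). This vanishing is isolated in a combinatorially defined class $\Phi_n$ of tensors closed under taking connected subtensors, on which one can then prove $\nu_n=\mu_n$ by a clean induction on $n$. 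Your decomposition cannot be placed in such a class, so to salvage your approach you would need a genuinely different invariance mechanism.
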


\begin{proof}
The Anick $1$-chains of the algebra $CK_5$ are the relations $abcd$, $bcde$, and $deab$, and the Anick $2$-chains are $abcde$, $bcdeab$, $deabcd$.
Furthermore, it turns out that for each $n\ge 3$ there are just two Anick chains: for $n=2s-1$, the monomials

 \[
bcde(abcde)^{s-2}abcd \quad \text{and} \quad de(abcde)^{s-1}ab,
\]

and for $n=2s$, the monomials

\[
bcde(abcde)^{s-1}ab \quad \text{and} \quad de(abcde)^{s-1}abcd .
\]

This can be easily proved by induction on $n$. We identify these chains with their duals in the Yoneda algebra~$B$.

We prove the statement of the theorem in a very concrete way. Namely, we demonstrate that for the element $$\omega_n=d\otimes e\otimes (abcde)^{\otimes (n-4)}\otimes a \otimes b\in B^{\otimes n},$$ and for any canonical $A_\infty$-structure $\{\nu_n\}$ on $B$, we have $\nu_n(\omega_n)\ne 0$.

First, we note that the total degree of $\omega_n$ is $1+1+3(n-4)+1+1=3n-8$, and the concatenation $de(abcde)^{n-4}ab$ is a chain of length $2(n-3)-1$, so an element of homological degree $2(n-3)=2n-6$. Since $2n-6=3n-8+2-n$, the homological degrees match, so for the canonical structure $\{\mu_n\}$ of Tamaroff, we have $\mu_n(\omega_n)\ne 0$.

Suppose that $\nu_n$ is another canonical structure, and $\{f_n\}$ is an $\infty$-equivalence between them; without loss of generality, $f_1=\id$. Let us denote by $\Phi_n$ the set of all $n$-fold products of chains $\upgamma_1\otimes \cdots\otimes \upgamma_n$ in $B^{\otimes n}$ with the following properties:

\begin{itemize}

\item[(i)] all degrees $|\upgamma_i|$ are odd,

\item[(ii)]  if $|\upgamma_i|=1$, then $i\in\{1,2,n-1,n\}$,

\item[(iii)] if $|\upgamma_1|=|\upgamma_2|=1$, then $\upgamma_1=d$, $\upgamma_2=e$,

\item[(iv)] if $|\upgamma_{n-1}|=|\upgamma_n|=1$, then $\upgamma_{n-1}=a$, $\upgamma_n=b$,

\item[(v)] $|\upgamma_i|>1$ for some $i$.

\end{itemize}

By direct inspection, every connected $r$-fold subtensor $\psi$ of $\phi\in\Phi_n$ is in $\Phi_r\setminus\{\omega_r\}$. Also, for the canonical structure of Tamaroff, if $\mu_n(\phi)\ne 0$ for some $\phi\in\Phi_n$, then $\phi=\omega_n$, which easily follows from the observation that concatenations of pairs of elements of degree greater than $2$ cannot be found as divisors of any Anick chain.

Using these observations, we prove by induction on $n$ that for any minimal canonical $A_\infty$-structure $\{\nu_m\}$ and for any $\phi\in\Phi_n$ we have $\nu_n(\phi)=\mu_n(\phi)$. The morphism identity states that

 \[
\sum_{i+j+k=n}\pm f_{i+1+k}(1^{\otimes i}\otimes\mu_j\otimes 1^{\otimes k})(\phi)=
\sum_{i_1+\ldots+i_r=n}\pm \nu_r(f_{i_1}\otimes\cdots\otimes f_{i_r})(\phi).
 \]

By induction and above observations, the only element on the left that does not vanish is $f_1(\mu_n)$, and the only element on the right that does not vanish is $\nu_n(f_1^{\otimes n})$ (since for $r<n$, in order for $\nu_r$ to not vanish, $(f_{i_1}\otimes\cdots\otimes f_{i_r})(\phi)$ must create $\omega_n$ with a nonzero coefficient, which is impossible for degree reasons), so we are done.
\end{proof}

\subsection{Vanishing patterns for higher products}\label{sectionvanishingpatterns} 

Let us now demonstrate how one can combine the two formulas for Tor groups to prove some nontrivial results. 

\begin{prop}\label{vanishingpatterns} Consider the canonical $A_\infty$-algebra structure on $\Ext^*_\Lambda(\Bbbk, \Bbbk)$ from Theorem~\ref{TamaroffThm}. Suppose $\upgamma_1, \dots, \upgamma_n$ are elements of the $\Ext$ algebra. 
\begin{enumerate}[(i)] 
\item If at least three of them are of even degree, then $m_n(\upgamma_1, \dots, \upgamma_n) = 0$. 
\item If exactly two of them are of even degree, then $m_n(\upgamma_1, \dots, \upgamma_n) = 0$ unless those elements are $\upgamma_1$ and $\upgamma_n$. 
\item If exactly one element is of even degree, then $m_n(\upgamma_1, \dots, \upgamma_n) = 0$ unless that element is $\upgamma_1$ or $\upgamma_n$. 
\end{enumerate}
\end{prop}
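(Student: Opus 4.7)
Since $A_\infty$ products are multilinear, it suffices to verify the proposition for $\upgamma_1, \dots, \upgamma_n$ each a dual of an Anick chain, say $\upgamma_i \in C^\vee_{r_i}$, in which case Theorem \ref{TamaroffThm} tells us that $m_n(\upgamma_1, \dots, \upgamma_n)$ is nonzero precisely when the concatenation $\Upgamma := \upgamma_1 \cdots \upgamma_n$ lies in $C^\vee_R$ with $R := r_1 + \cdots + r_n + 1$. Since $\upgamma_i$ has degree $r_i + 1$ in $\Ext$, the hypothesis ``$\upgamma_i$ has even degree in $\Ext$'' is the same as ``$r_i$ is odd'', and all three parts of the proposition amount to a single statement: if $\Upgamma \in C_R$, then at most two of the $r_i$ are odd, and any such odd-indexed factor must occupy position $i = 1$ or $i = n$. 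Write $k$ for the number of indices $i$ with $r_i$ odd.

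The plan is to combine the two $\Tor$ descriptions of Theorems \ref{th:Govorov} and \ref{th:AnickTor}, the broader theme of this section. Under the $\mathfrak{C}(Q)$-grading, all intersections and quotients in the Govorov--Gruenberg formulas are computed basis-wise, yielding the dictionary that every Anick $r$-chain carries a maximum packing of exactly $\lceil r/2 \rceil$ pairwise disjoint relation occurrences, and that a chain of odd index $r = 2q-1$ lies in $I^q \cap \mathfrak{m} I^{q-1}\mathfrak{m}$ but in neither $I^q\mathfrak{m}$ nor $\mathfrak{m} I^q$, making it \emph{tight on both sides} (leftmost and rightmost arrows each lie inside a relation), while a chain of even index is \emph{loose on both sides}, carrying a nontrivial arrow extension beyond its relations on each side.

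Part (i) is then a counting inequality. Disjoint packings internal to each factor $\upgamma_i$ combine to give a global disjoint packing in $\Upgamma$, so $\lceil R/2 \rceil \geq \sum_{i=1}^n \lceil r_i/2 \rceil$, and an elementary computation evaluates the difference as
\begin{align*}
	\lceil R/2 \rceil - \sum_{i=1}^n \lceil r_i/2 \rceil =
		\begin{cases}
			1 - k/2 & \text{if $\sum_i r_i$ is even}, \\
			(1-k)/2 & \text{if $\sum_i r_i$ is odd.}
		\end{cases}
\end{align*}
Nonnegativity of the left-hand side forces $k \leq 2$. For parts (ii) and~(iii), the positional constraint comes from a minimality argument: supposing that some interior factor $\upgamma_{i_0}$ (with $1 < i_0 < n$) has $r_{i_0}$ odd, $\upgamma_{i_0}$ is tight on both sides, while its neighbors $\upgamma_{i_0-1}$ and $\upgamma_{i_0+1}$ push nontrivial arrows against those tight ends; tracing Anick's construction of the chain $\Upgamma$ across the adjacent interior boundary, the chain relation that must extend beyond $\upgamma_{i_0}$ ends up containing one of $\upgamma_{i_0}$'s own boundary relations as a proper subpath, contradicting the standing assumption that the generating set of $I$ is minimal with respect to subpath inclusion.

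The main obstacle is making this last step precise: verifying that a chain relation of $\Upgamma$ adjacent to a tight interior factor must engulf one of its boundary relations. The rigidity of the tight
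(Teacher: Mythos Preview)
Your argument for part (i) is correct and is essentially the paper's argument, recast in the language of disjoint relation packings: the inequality $\lceil R/2\rceil \ge \sum_i \lceil r_i/2\rceil$ is exactly the statement that the concatenation lies in $I^d$ with $d=\sum_i\lceil r_i/2\rceil$, while an $R$-chain cannot lie in $I^{\lceil R/2\rceil+1}$ by the Govorov--Gruenberg description.

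For parts (ii) and (iii), however, you abandon this framework and attempt a direct combinatorial argument on the Anick chain structure of $\Upgamma$ --- and you do not finish it (the proposal breaks off mid-sentence, and you yourself flag the missing step as ``the main obstacle''). The sketched claim, that a linking relation of $\Upgamma$ crossing the boundary of a tight interior factor must \emph{properly contain} one of that factor's boundary relations, is not justified and is not obviously true: the linking relation could merely overlap it. Moreover, your phrase ``its neighbors $\upgamma_{i_0-1}$ and $\upgamma_{i_0+1}$ push nontrivial arrows'' tacitly assumes both neighbours are loose, which fails in case (ii) when the two tight factors are adjacent.

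The paper's proof avoids all of this by staying inside the Govorov--Gruenberg framework you already set up for (i). The point is that the denominators in Theorem~\ref{th:Govorov} encode not only the $I$-power but also the flanking $\mathfrak m$-factors. Concretely: when $k=2$ the target is $\Tor_{2d}$, and if either $\upgamma_1$ or $\upgamma_n$ has odd degree (hence lies in $\mathfrak m I^{e}\cap I^{e}\mathfrak m$), the loose $\mathfrak m$ at that outer edge places the concatenation in $\mathfrak m I^{d}+I^{d}\mathfrak m$, the denominator of \eqref{eq:Gov1}. When $k=1$ the target is $\Tor_{2d+1}$, and if \emph{both} $\upgamma_1,\upgamma_n$ have odd degree, the two outer $\mathfrak m$'s place the concatenation in $\mathfrak m I^{d}\mathfrak m$, the denominator of \eqref{eq:Gov2}. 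In each case the class is zero, finishing (ii) and (iii) in one line apiece. You had all the ingredients for this after your ``tight/loose'' dictionary; the missed observation is that the positional constraint is detected by the \emph{outermost} factors, not by the interior tight one.
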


\begin{proof}
By linearity, it is enough to prove this result in the case of all those elements being (duals of) Anick chains. Suppose that among them, we have $k$elements of even degrees $2d_1$, \ldots, $2d_k$, and $\ell$ elements of odd degrees $2e_1+1$, \ldots, $2e_\ell+1$; of course, $n=k+\ell$. We once again use the fact that Tor groups and their combinatorial bases are well defined, and identify those elements with the elements of the Govorov--Gruenberg spaces; from Formulas~\eqref{eq:Gov1} and~\eqref{eq:Gov2} it follows in particular that those elements are represented by paths in $I^{d_1}$, \ldots, $I^{d_k}$, $I^{e_1}\mathfrak{m}\cap \mathfrak{m} I^{e_1}$, \ldots, $I^{e_\ell}\mathfrak{m}\cap \mathfrak{m} I^{e_\ell}$ respectively. By Formula \eqref{higherproducts}, the value of the operation~$\mu_n$ on these elements is proportional to the concatenation of the corresponding paths. 

Note that the concatenation of paths from $I^{d_1}$, \ldots, $I^{d_k}$, $I^{e_1}\mathfrak{m}\cap \mathfrak{m}I^{e_1}$, \ldots, $I^{e_\ell}\mathfrak{m}\cap \mathfrak{m} I^{e_\ell}$ (in any order) is in $I^{d_1+\cdots+d_k+e_1+\cdots+e_\ell}$. 
At the same time, the result of applying $\mu_n$ to such words has homological degree
 \[
2(d_1+\cdots+d_k+e_1+\cdots+e_\ell)+\ell+2-n=2(d_1+\cdots+d_k+e_1+\cdots+e_\ell)+2-k .
 \]
In particular, for $k>2$, the degree of $\mu_n(\upgamma_1,\ldots,\upgamma_n)$ is less than $$d=2(d_1+\cdots+d_k+e_1+\cdots+e_\ell).$$ Applying Formulas~\eqref{eq:Gov1} and~\eqref{eq:Gov2} again, we conclude that the concatenation of all our elements is in the zero coset of the corresponding Govorov--Gruenberg space. 

Similarly, for $k=2$, the degree of $\mu_n(\upgamma_1,\ldots,\upgamma_n)$ is the even number $$2(d_1+\cdots+d_k+e_1+\cdots+e_\ell).$$ If we assume that at least one of the two elements $\upgamma_1$, $\upgamma_n$ is of odd degree, we observe that the concatenation of all our elements is in $\mathfrak{m} I^d+I^d\mathfrak{m}$, which is in the zero coset of the corresponding Govorov--Gruenberg space.

Finally, for $k=1$, the degree of $\mu_n(\upgamma_1,\ldots,\upgamma_n)$ is the odd number $$2(d_1+\cdots+d_k+e_1+\cdots+e_\ell)+1.$$ If we assume that both elements $\upgamma_1$, $\upgamma_n$ are of odd degree, we observe that the concatenation of all our elements is in $\mathfrak{m} I^d\mathfrak{m}$, which is in the zero coset of the corresponding Govorov--Gruenberg space.
\end{proof}

As a first consequence, one may simplify the signs arising in the higher products, at least to some extent.

\begin{cor}\label{newhigherproducts} Formula (\ref{higherproducts}) can be simplified to
\begin{align}
m_n(\upgamma_1, \dots, \upgamma_n) = \begin{cases} (-1)^{nr_1 + r_1r_n + r_1 + r_n} \upgamma_1 \dots \upgamma_n & \textup{ if } \upgamma_1 \dots \upgamma_n \in C^\vee_{r_1 + \dots + r_n + 1}, \\ 
0 & \textup{ else.}\end{cases} 
\end{align}
\end{cor}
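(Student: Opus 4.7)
The plan is to deduce the simplification by a direct parity computation, crucially restricted by the vanishing patterns just established in Proposition \ref{vanishingpatterns}. Since the two sides of the claimed identity differ only by a sign, we only need to show that the exponent $N = \sum_{i<j} r_i(r_j+1) + r_1 + \sum_i r_i$ is congruent to $nr_1 + r_1 r_n + r_1 + r_n$ modulo $2$ whenever the product is nonzero.

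The first step is to translate the conclusion of Proposition \ref{vanishingpatterns} into a parity statement on the integers $r_i$. Under the identification $E = \Bbbk \oplus kC_{*-1}^\vee$, a chain $\upgamma_i \in C_{r_i}^\vee$ sits in Ext-degree $r_i + 1$, so even Ext-degree corresponds to odd $r_i$. The vanishing patterns say that $m_n(\upgamma_1, \dots, \upgamma_n)$ can only be nonzero when at most two of the $r_i$ are odd, and any odd one must occur at position $1$ or $n$. Consequently, in any non-vanishing case, $r_i$ is even for $2 \leq i \leq n-1$.

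The second step is a straightforward reduction of $N$ modulo $2$. Writing $\sigma_1 = \sum_i r_i$ and $\sigma_2 = \sum_{i<j} r_i r_j$, and noting that $\sum_{i<j} r_i = \sum_i (n-i)r_i$, one obtains
$$N \equiv \sigma_2 + (n+1)\sigma_1 + \sum_i i\,r_i + r_1 \pmod 2.$$
Applying the parity restriction $r_i \equiv 0$ for $1 < i < n$, every sum collapses to contributions from $r_1$ and $r_n$: namely $\sigma_1 \equiv r_1 + r_n$, $\sigma_2 \equiv r_1 r_n$, and $\sum_i i r_i \equiv r_1 + n r_n$. Substituting these and simplifying gives
$$N \equiv r_1 r_n + (n+1)(r_1 + r_n) + r_1 + n r_n + r_1 \equiv nr_1 + r_1 r_n + r_1 + r_n \pmod 2,$$
which is exactly the exponent appearing in the simplified formula.

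I do not anticipate any substantive obstacle here, since the argument is essentially a bookkeeping exercise that rests entirely on the non-trivial input already supplied by Proposition \ref{vanishingpatterns}; the only care needed is to ensure that the parity constraint on the middle $r_i$'s is correctly applied to each of the four symmetric functions of the $r_i$ that appear in $N$.
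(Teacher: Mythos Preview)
Your proof is correct and follows exactly the route the paper intends: the corollary is stated without proof as an immediate consequence of Proposition~\ref{vanishingpatterns}, and your argument is the natural direct parity computation, correctly using that $r_i$ is even for $2\le i\le n-1$ in any non-vanishing case. The arithmetic checks out line by line.
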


\subsection{Application: Bardzell's resolution} 

In \cite{Bardzell97}, Bardzell constructed explicitly the minimal bimodule resolution of the diagonal bimodule over any algebra $\Lambda$ with monomial relations. By an intricate study of combinatorics of Anick chains, he established that the shape of the differential of such resolution alternates, depending on the homological degree. Let us explain how this phenomenon is an immediate consequence of Proposition \ref{vanishingpatterns}. 

Repeating the argument of \cite[Th.~4.2]{Herscovich18} \emph{mutatis mutandis} for the case of quiver algebras, we see that the canonical $A_\infty$-algebra structure on $\Ext^*_\Lambda(\Bbbk,\Bbbk)$ gives rise to a free bimodule resolution of the diagonal bimodule. The corresponding twisting cochain $\tau\colon\Tor_*^\Lambda(\Bbbk,\Bbbk)\to \Lambda$ is extremely easy to describe for a monomial quiver algebra $\Lambda=kQ/I$: it annihilates elements of homological degree different from one, and on elements of degree one it is given by the identity map under the identification $\Tor_1^\Lambda(\Bbbk,\Bbbk)= kQ_1\subset \Lambda$.

Now, the only thing that remains is to examine carefully the dual formulas of \ref{higherproducts}. We shall use the variant of that formula which describes the boundary map in the minimal model of $\Lambda$, and can be chosen in the form similar to \cite[Th.~4.1]{Tamaroff}, but with a different choice of signs arising from considering left Anick chains: 
\begin{equation}\label{diffmodel}
b(s^{-1}\upgamma)=\sum_{n\ge 2} (-1)^{|s^{-1}\upgamma_1|} s^{-1}\upgamma_1\otimes\cdots\otimes s^{-1}\upgamma_n ,
\end{equation}
 where the sum ranges through all decompositions of an Anick chain $\upgamma$. This shape of the formula is particularly convenient for computing the differential of the bimodule resolution of the diagonal module, since the twisting cochain $\tau$ is precisely the operator $\upgamma\mapsto s^{-1}\upgamma$ on $\upgamma$ of degree $1$. Thus, to obtain the answer, we need to classify all possible decompositions 
 \[
\upgamma=\upgamma_1\cdots \upgamma_n
 \]
with $|s^{-1}\upgamma|=|s^{-1}\upgamma_1|+\cdots+|s^{-1}\upgamma_n|+1$ and where in addition all $s^{-1}\upgamma_i$ but one are of homological degree~$0$. By Proposition \ref{vanishingpatterns}, this depends on the parity of~$|\upgamma|$. If $|\upgamma|$ is odd, then for the only element $s^{-1}\upgamma_i$ of positive homological degree, $|\upgamma_i|$ is even, and so $i=1$ or $i=n$. On the other hand, if $|\upgamma|$ is even, then for the only element $s^{-1}\upgamma_i$ of positive homological degree, $|\upgamma_i|$ is odd, so there are no constraints on the position of this element. Thus, Formula \ref{diffmodel} suggests that the image of the generator $e_{t(\upgamma)}\otimes \upgamma\otimes e_{s(\upgamma)}$ of the bimodule resolution $\Lambda\otimes kC_*\otimes\Lambda$ under the differential of that resolution is given by the formula
 \[
\left\{
\begin{aligned}
h_\upgamma\otimes \upgamma''\otimes e_{s(\upgamma)} - e_{t(\upgamma)}\otimes \upgamma'\otimes t_\upgamma, \quad &|\upgamma|\equiv 1\pmod{2},\\
\sum\alpha\otimes \tilde{\upgamma}\otimes \beta, \qquad &|\upgamma|\equiv 0\pmod{2},
\end{aligned}
\right.
 \]
where $\upgamma=h_\upgamma\upgamma''=\upgamma't_\upgamma$ are the decompositions of $c$ as the left chain and the right chain that factor out the head and the tail, and the sum is over all decompositions $c=\alpha\tilde{\upgamma}\beta$ with $\tilde{\upgamma}$ a chain of homological degree one less than $\upgamma$. This is precisely the result of \cite[Th.~4.1]{Bardzell97}, the main theorem of that paper.

\section{Perfect paths and Gorenstein algebras}\label{sec:PerfGor}

\subsection{Gorenstein-projective modules and related combinatorics}

Recall that a module $M \in \modsf \Lambda$ is called Gorenstein-projective if it satisfies the following conditions: 
\begin{enumerate}[i)]
\item There exists an acyclic complex of finite projectives
	\begin{align} 
		C_*: 	\cdots \to C_3 \xrightarrow{d} C_2 \xrightarrow{d} C_{1} \xrightarrow{d} C_0 \xrightarrow{d} C_{-1} \xrightarrow{d} C_{-2} \to \cdots 
	\end{align}
	such that ${\rm coker}(C_1 \xrightarrow{d} C_0) \cong M$. 
\item The dual complex ${\rm Hom}_\Lambda(C_*, \Lambda)$ is also acyclic. 
	\end{enumerate}

In this case the non-negative truncation of $C_*$ resolves $M$, and we call $C_*$ a complete resolution of $M$. Equivalently \cite[Chp. 4]{Bu}, $M$ is Gorenstein-projective if and only if 
	\begin{align}
		{\rm Ext}^i_\Lambda(M, \Lambda) = 0 \textup{ and } {\rm Ext}^i_{\Lambda^{op}}(M^*, \Lambda^{op}) = 0 \textup{ for all } i > 0	
	\end{align}
	and $M \cong M^{**}$ is reflexive, where $M^* = {\rm Hom}_\Lambda(M, \Lambda)$ is the dual module.

In \cite{CSZ}, Chen--Shen--Zhou classified the indecomposable Gorenstein-projectives over monomial algebras. Let us recall the combinatorial notions that underpin that classification. Note that for a monomial algebra $\Lambda$, the coset of a path $p \in \mathcal{P}_Q$ is zero in $\Lambda$ if and only if it is divisible by a relation. We let 
\begin{align} 
\mathcal{P}_\Lambda = \{ p \in \mathcal{P}_Q \ | \ p \textup{ is not divisible by any relation} \} 
\end{align} 
be the subset of nonzero paths in $\Lambda$, and the canonical surjection $kQ \twoheadrightarrow \Lambda$ identifies $\mathcal{P}_\Lambda$ with a basis for $\Lambda$. 

Given any path $p \in \mathcal{P}_\Lambda$, we define minimal left zero cofactors of $p$ to be those non-trivial paths $q \in \mathcal{P}_\Lambda$ with $s(q) = t(p)$, $qp = 0$, and for which no proper end segment $q'$ has the same property: if $q=q''q'$, and $q'p=0$, then $q=q'$. Similarly one defines minimal right zero cofactors. We let 
\begin{align}
	L(p) &= \{q \in \mathcal{P}_\Lambda \ | \ q \textup{ is a minimal left zero cofactors of } p \}\\ 
	R(p) &= \{q \in \mathcal{P}_\Lambda \ | \ q \textup{ is a minimal right zero cofactors of } p \}.
\end{align} 

\begin{defn} 
A pair of non-trivial paths $(p,q)$ in $\Lambda$ with $s(p) = t(q)$ is a perfect pair if $R(p) = \{ q \}$ and $L(q) = \{ p \}$. 
\end{defn}

\begin{defn} 
Let ${\bf p} = (p_0, p_1, \dots, p_{r-1})$ be a sequence of non-trivial paths in $\Lambda$ such that $s(p_{i}) = t(p_{i+1})$ for all $i \in \Z/r\Z$. We say that ${\bf p}$ is a perfect cycle if all pairs $(p_{i}, p_{i+1})$ are perfect; in such case we call the paths $p_i$ perfect as well. If $r$ is the minimal integer with this property, we call $r$ the period of ${\bf p}$. 
\end{defn}
Given a perfect cycle ${\bf p} = (p_0, p_1, \dots, p_{r-1})$, one may extend it by periodicity to ${\bf p} = (p_i)_{i \in \Z}$ by setting $p_{i+r} = p_i$. We then attach an unbounded, periodic complex of finite right projectives $Q_*^{({\bf p})}$, with terms $Q_i^{({\bf p})} = e_{s(p_i)} \Lambda $ and differentials given by left multiplication by the $p_i$: 
\begin{align} 
Q_*^{({ \bf p})}: \cdots \to e_{s(p_{i+3})}\Lambda  \xrightarrow{l_{p_{i+3}}} e_{s(p_{i+2})}\Lambda  \xrightarrow{l_{p_{i+2}}} e_{s(p_{i+1})}\Lambda  \xrightarrow{l_{p_{i+1}}} e_{s(p_{i})}\Lambda  \to \cdots 
\end{align} 
That this is a complex follows from the zero cofactor relation $p_i p_{i+1} = 0$ in $\Lambda$. Moreover, $Q_*^{({\bf p})}$ is acyclic as each pair $(p_i, p_{i+1})$ is perfect \cite[Prop.~4.4]{CSZ}. 

 Let $\mathcal{GP}(\Lambda)$ be the set of isomorphism classes of non-projective, indecomposable Gorenstein-projective right modules over $\Lambda$. 

\begin{thm}[{\cite[Th. 4.1]{CSZ}}]\label{GPclassification} Let $\Lambda$ be a monomial algebra. Then the map $p \mapsto p \Lambda$ is a bijection 
	\begin{align*}
		\{ \textup{perfect paths in } \mathcal{P}_\Lambda \} \leftrightarrow \mathcal{GP}(\Lambda). 
	\end{align*} 
	Moreover, given a perfect path $p = p_0$ in a perfect cycle ${\bf p} = (p_0, p_1, \dots, p_{r-1})$, the complex $Q_*^{({\bf p})}$ is a complete resolution of $p_0\Lambda$, and in particular its non-negative truncation 
	\begin{align}
		\cdots \to e_{s(p_{3})}\Lambda  \xrightarrow{l_{p_{3}}} e_{s(p_{2})}\Lambda  \xrightarrow{l_{p_{2}}} e_{s(p_{1})}\Lambda  \xrightarrow{l_{p_{1}}} e_{s(p_{0})}\Lambda  \xrightarrow{l_{p_0}} p_0 \Lambda \to 0
	\end{align} 
	is a minimal projective resolution of $p\Lambda = p_0 \Lambda$. 
\end{thm}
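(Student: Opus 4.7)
For the forward direction, starting from a perfect cycle $\mathbf{p} = (p_0, p_1, \ldots, p_{r-1})$ containing $p = p_0$, I would verify directly that the periodic complex $Q_*^{(\mathbf{p})}$ is a complete resolution of $p_0\Lambda$. The core computation is local: for a monomial algebra, a basis path $x \in e_{s(p_{i+1})}\Lambda$ satisfies $p_{i+1}x = 0$ iff $x$ has a left factor in $R(p_{i+1}) = \{p_{i+2}\}$, so
\[
\ker(l_{p_{i+1}}) = p_{i+2}\Lambda = \operatorname{im}(l_{p_{i+2}}),
\]
giving acyclicity. Under the identification $\operatorname{Hom}_\Lambda(e_{s(p_i)}\Lambda, \Lambda) \cong \Lambda e_{s(p_i)}$, the dual differential becomes right multiplication by $p_{i+1}$, and the symmetric argument using $L(p_{i+1}) = \{p_i\}$ yields acyclicity of the dual complex. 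Hence $p_0\Lambda$ is Gorenstein-projective; it is indecomposable since its projective cover $e_{s(p_0)}\Lambda \twoheadrightarrow p_0\Lambda$ is indecomposable, and non-projective because $Q_*^{(\mathbf{p})}$ is genuinely periodic.

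For the reverse direction, the structural input I would establish first is the syzygy formula for monomial algebras:
\[
\Omega(p\Lambda) \cong \bigoplus_{q \in R(p)} q\Lambda.
\]
Surjectivity onto the kernel of the projective cover $e_{s(p)}\Lambda \twoheadrightarrow p\Lambda$ follows from minimality of $R(p)$, while directness holds because distinct elements of $R(p)$ are pairwise non-comparable under the prefix order, so the submodules $q\Lambda \subseteq e_{s(p)}\Lambda$ intersect trivially. Now let $M$ be an indecomposable non-projective Gorenstein-projective module with minimal complete resolution $C_*$. Iterating the syzygy formula shows every term and syzygy is a direct sum of cyclic modules $q\Lambda$. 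Since $M$ is indecomposable in the stable category of Gorenstein-projectives --- which is Krull--Schmidt and closed under $\Omega^{\pm 1}$ in the Gorenstein setting --- each stable syzygy reduces to a single cyclic module $p_i\Lambda$, and the syzygy formula forces $R(p_i) = \{p_{i+1}\}$. Running the analogous analysis on the acyclic dual complex, viewed as a resolution of $M^*$ as a left $\Lambda$-module, yields $L(p_{i+1}) = \{p_i\}$. Finite-dimensionality of $\Lambda$ makes the sequence $(p_i)$ eventually periodic, producing a perfect cycle and the desired isomorphism $M \cong p_0\Lambda$.

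Injectivity of $p \mapsto p\Lambda$ is immediate: the first differential $e_{s(p)}\Lambda \to e_{t(p)}\Lambda$ in the minimal projective resolution is left multiplication by $p$, which recovers $p$ uniquely from the module. The main obstacle I anticipate is the stable indecomposability argument in the reverse direction, specifically ruling out that the minimal complete resolution of $M$ could split off projective summands or multiple cyclic summands at some syzygy. This requires combining the combinatorial syzygy formula, the reflexivity inherent in Gorenstein-projectivity, and Krull--Schmidt in the stable category. The bi-sided perfect pair condition --- both $R(p_i) = \{p_{i+1}\}$ and $L(p_{i+1}) = \{p_i\}$ --- is exactly what the two-sided acyclicity of a complete resolution delivers, and cleanly integrating the two one-sided combinatorial analyses is the delicate step.
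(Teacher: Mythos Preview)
The paper does not itself prove this theorem; it is stated with attribution to Chen--Shen--Zhou \cite{CSZ} and used as a black box. So there is no proof in the paper to compare against, and I evaluate your sketch on its own terms.

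Your forward direction is correct. The computations $\ker(l_{p_{i+1}}) = p_{i+2}\Lambda$ from $R(p_{i+1}) = \{p_{i+2}\}$ and the dual version from $L(p_{i+1}) = \{p_i\}$ give two-sided acyclicity; indecomposability is clear from the local projective cover; non-projectivity follows since $R(p_0) = \{p_1\} \neq \emptyset$ (cleaner than appealing to periodicity).

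The reverse direction has a real gap. You invoke the syzygy formula $\Omega(p\Lambda) \cong \bigoplus_{q \in R(p)} q\Lambda$ and write ``iterating the syzygy formula shows every term and syzygy is a direct sum of cyclic modules $q\Lambda$''. But that formula presupposes the input is already of the form $p\Lambda$; for an arbitrary indecomposable Gorenstein-projective $M$ you have not established this, and there is no iteration to start. This is not the bookkeeping issue about counting summands that you flag at the end --- it is a prior structural step: one must first know that $M$ is isomorphic to a path module at all. The standard input (used in \cite{CSZ}, and going back to the Green--Happel--Zacharia/Zimmermann-Huisgen analysis of monomial resolutions) is that over a monomial algebra $\Omega^2 N$ is a direct sum of modules $p\Lambda$ for \emph{every} finite module $N$. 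Granting this, and using that $\Omega$ is an autoequivalence on the stable category of Gorenstein-projectives (which needs no Gorenstein hypothesis on $\Lambda$, contrary to your parenthetical), one gets $M \cong \Omega^2(\Omega^{-2} M) \oplus (\text{projective})$ in $\modsf \Lambda$; since $M$ is indecomposable non-projective this forces $M \cong p\Lambda$, and only then does your syzygy/cosyzygy analysis run to produce the perfect cycle.

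Your injectivity argument is slightly misphrased: the first differential of the minimal resolution of $p\Lambda$ is $\bigoplus_{q \in R(p)} l_q$, not $l_p$. The point survives once reformulated: the isomorphism type of $p\Lambda$ determines $s(p)$ and $R(p) = \{p_1\}$, and since $p_1$ is perfect, $p$ is recovered as the unique element of $L(p_1)$.
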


A monomial algebra $\Lambda$ may contain no perfect path and so may admit no non-trivial Gorenstein-projective module at all. At the opposite end of the spectrum, Gorenstein monomial algebras (of infinite global dimension) have an abundance of non-trivial Gorenstein-projective modules, and so have a large supply of perfect paths in view of the classification result of Chen--Shen--Zhou.

Recall that $\Lambda$ is Gorenstein if the two-sided injective dimensions $\idim _\Lambda \Lambda < \infty$ and $\idim \Lambda_\Lambda < \infty$ are finite, in which case they are equal by a theorem of Zaks \cite{Za}. In this case, we denote this common number by $\dim \Lambda$ and
call it the Gorenstein dimension of $\Lambda$. When $\Lambda$ is Gorenstein, \cite[4.2]{Bu} shows that ${\rm Ext}^i_\Lambda(M, \Lambda) = 0$ for all $i >0$ is necessary and sufficient for $M$ to be Gorenstein-projective and so the $n$-th syzygy $\Omega^n N$ of any module $N \in \modsf \Lambda$ is Gorenstein-projective for $n \geq \dim \Lambda$.  

For finite dimensional algebras, by work of Bergh--Jorgensen--Oppermann \cite{BJO} one can in fact characterise the Gorenstein property for $\Lambda$ in terms of the syzygies of $\Bbbk$. We will use this in Section 3 to characterise Gorenstein monomial algebras in terms of Anick chains. The following easy consequence of \cite[Th. 4.1]{BJO} will be the relevant result for us. 
\begin{prop}\label{gorensteindefect} Let $\Lambda = kQ/I$ be a finite-dimensional path algebra. Then $\Lambda$ is Gorenstein if and only if $\Omega^n \Bbbk$ is Gorenstein-projective for some $n \geq 0$. In this case, the minimal such $n$ equals the Gorenstein dimension of $\Lambda$. 
\end{prop}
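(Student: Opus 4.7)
The plan is to derive the proposition from the criterion \cite[Th. 4.1]{BJO}, which characterises the Gorenstein property of a finite-dimensional algebra in terms of Gorenstein-projectivity of sufficiently high syzygies of simple modules. First I would observe that $\Bbbk = \Lambda / \rad \Lambda$ decomposes as a finite direct sum of all simple right $\Lambda$-modules $S_i$, one for each vertex (with multiplicities). Since a direct sum of minimal projective resolutions is again a minimal projective resolution, there are no stray projective summands and we obtain $\Omega^n \Bbbk \cong \bigoplus_{i} \Omega^n S_i$ for each $n \geq 0$.

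The class of Gorenstein-projective modules is closed under finite direct sums and direct summands, so the decomposition above shows that $\Omega^n \Bbbk$ is Gorenstein-projective if and only if every $\Omega^n S_i$ is. Applying \cite[Th. 4.1]{BJO} to this simple-by-simple condition gives the equivalence with $\Lambda$ being Gorenstein with $\dim \Lambda \leq n$, and identifies the smallest such $n$ with the Gorenstein dimension $\dim \Lambda$. Assembling these two equivalences yields both the biconditional and the sharpening about the minimal exponent claimed in the proposition.

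The forward direction can alternatively be derived without appealing to the minimality statement in \cite{BJO}: when $\Lambda$ is Gorenstein of dimension $d$, the remark recorded just above the proposition already guarantees that $\Omega^d \Bbbk$ is Gorenstein-projective. The sharper assertion that no smaller $n$ works uses that if $\Omega^m \Bbbk$ were Gorenstein-projective with $m<d$, then $\Ext^{>m}_\Lambda(\Bbbk,\Lambda) = 0$; dévissage along a finite composition series then forces $\idim \Lambda_\Lambda \leq m$, and symmetrically $\idim {_\Lambda}\Lambda \leq m$ via the dual reflexivity condition in the definition of Gorenstein-projectivity, contradicting $\dim \Lambda = d$.

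The main obstacle is precisely this last transfer between the one-sided Ext-vanishing implied by $\Omega^n \Bbbk$ being Gorenstein-projective and the finiteness of \emph{both} one-sided injective dimensions of $\Lambda$: the definition of Gorenstein-projectivity packages the right-module vanishing $\Ext^{>0}_\Lambda(\Omega^n\Bbbk, \Lambda) = 0$ together with the dual left-module vanishing $\Ext^{>0}_{\Lambda^{op}}((\Omega^n\Bbbk)^*, \Lambda^{op}) = 0$ and the isomorphism $\Omega^n\Bbbk \cong (\Omega^n\Bbbk)^{**}$. Extracting from the second of these that $\idim {_\Lambda}\Lambda \leq n$ requires identifying $(\Omega^n\Bbbk)^*$ with an $n$-th syzygy of $\Bbbk$ over $\Lambda^{op}$, modulo projective summands; this is exactly the content that \cite[Th. 4.1]{BJO} supplies and is the only non-formal step in the argument.
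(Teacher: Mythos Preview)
Your approach is essentially the paper's: both routes hinge on \cite[Th.~4.1]{BJO} for the converse direction and on a d\'evissage along composition series for the minimality claim. Two small points distinguish your write-up from the paper's and are worth flagging.

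First, you slightly over-read \cite[Th.~4.1]{BJO}: that theorem characterises Gorensteinness via vanishing of the Gorenstein defect category $\mathcal{D}_{sg}(\Lambda)/\underline{\mathcal{GP}}(\Lambda)$, not directly in terms of syzygies of simples. The paper makes the missing bridge explicit---once $\Omega^n\Bbbk$ is Gorenstein-projective, a Jordan--H\"older filtration argument shows every object of $\mathcal{D}_{sg}(\Lambda)$ vanishes in the defect category---whereas you have folded this step into the citation. This is a presentation issue rather than a mathematical gap.

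Second, your worry in the final two paragraphs about separately controlling $\idim {_\Lambda}\Lambda$ is unnecessary. In the minimality argument $\Lambda$ is already known to be Gorenstein, so Zaks' theorem equates the two one-sided injective dimensions; the paper therefore only shows $\mathrm{Ext}^{>n}_\Lambda(\Bbbk,\Lambda)=0$, deduces $\idim\Lambda_\Lambda\leq n$ by d\'evissage, and concludes $\dim\Lambda\leq n$ immediately. There is no need to identify $(\Omega^n\Bbbk)^*$ with a left syzygy, and this is not what \cite{BJO} is being used for.
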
 
\begin{proof}
	If $\Lambda$ is Gorenstein then $\Omega^n \Bbbk$ is Gorenstein-projective whenever $n \geq \dim \Lambda$. Conversely, if $\Omega^n \Bbbk$ is Gorenstein-projective for some $n \geq 0$ then it is easy to see that the \emph{Gorenstein defect category} (that is, the singularity category of $\Lambda$ modulo Gorenstein-projectives, see \cite{BJO}) vanishes by Jordan-H\"older filtration arguments, and this characterises the Gorenstein property by \cite[Th. 4.1]{BJO}. For the last statement, let $\Lambda$ be Gorenstein and let $n$ be the minimal such integer. It is immediate that $n \leq \dim \Lambda$. Since $\Omega^n \Bbbk$ is Gorenstein-projective, we have 
	\begin{align} 
		{\rm Ext}^{n+i}_\Lambda(\Bbbk, \Lambda) = {\rm Ext}^i_\Lambda(\Omega^n \Bbbk, \Lambda) = 0 \textup{ for all } i > 0.  
		\end{align}
From Jordan-H\"older filtrations one obtains that ${\rm Ext}^{n+i}_\Lambda(N, \Lambda) = 0$ for all $i > 0$ and $N \in \modsf \Lambda$, and so $\dim \Lambda \leq n$.
\end{proof}

\subsection{Anick chains over Gorenstein algebras} 

We now know how to test the Gorenstein property for a monomial algebra $\Lambda$ in terms of the syzygy modules $\Omega^n \Bbbk$ via Proposition~\ref{gorensteindefect}. In this section, we translate this into the combinatorial language of Anick chains, and deduce normal forms for chains of sufficiently large degree over Gorenstein monomial algebras.  

Up to now we have taken modules without further qualifiers to mean right modules. However, certain results below will need to be stated for both left and right modules, and similarly results on the structure of Anick chains will need to be stated for both right and left Anick chains, which correspond to the right and left minimal projective resolution of $\Bbbk = \Lambda/\mathfrak{r}$, respectively. In what follows we will continue working with right modules and right Anick chains, and simply note that all dual statements for left modules and chains can be obtained formally by passing to the opposite algebra $\Lambda \mapsto \Lambda^{\rm op}$. 

We start by relating the syzygy modules to Anick chains.

\begin{prop}\label{syzygies} Let $\Lambda$ be a monomial algebra and $n \geq 1$. We have isomorphisms of right and left modules 
	\begin{align}
		&\Omega^n(\Bbbk_\Lambda) \cong \bigoplus_{\upgamma \in C_{n-1}}	t_\upgamma \Lambda \\
		&\Omega^n( _\Lambda \Bbbk) \cong \bigoplus_{\upgamma \in C_{n-1}} \Lambda h_\upgamma. 
	\end{align}
\end{prop}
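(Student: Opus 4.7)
The plan is to identify $\Omega^n(\Bbbk_\Lambda)$ with the image of the differential in the minimal projective resolution of Theorem~\ref{th:AnickTor} and show that it decomposes as a direct sum indexed by the Anick $(n-1)$-chains. Writing the resolution as
\[
\cdots \to kC_{n-1}\otimes_\Bbbk \Lambda \xrightarrow{d_n} kC_{n-2}\otimes_\Bbbk \Lambda \to \cdots,
\]
with $d_n(\upgamma\otimes 1) = \upgamma'\otimes t_\upgamma$ for $\upgamma = \upgamma' t_\upgamma \in C_{n-1}$, we have
\[
\Omega^n(\Bbbk_\Lambda) \;=\; {\rm im}(d_n) \;=\; \sum_{\upgamma \in C_{n-1}} \upgamma'\otimes t_\upgamma\Lambda.
\]
Each summand $\upgamma'\otimes t_\upgamma\Lambda$ is a cyclic right submodule of $\upgamma'\otimes \Lambda$ that is, as a right $\Lambda$-module, isomorphic to $t_\upgamma\Lambda$. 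Hence it remains only to establish that this sum is direct.

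The key combinatorial step is the claim that for distinct $\upgamma_1, \upgamma_2 \in C_{n-1}$ sharing the same preceding $(n-2)$-chain $\upgamma_1' = \upgamma_2'$, the tails $t_{\upgamma_1}$ and $t_{\upgamma_2}$ are incomparable as paths, meaning that neither is a left divisor of the other. If, say, $t_{\upgamma_1}$ were a proper left divisor of $t_{\upgamma_2}$, then $\upgamma_1 = \upgamma_1' t_{\upgamma_1}$ would be a proper left divisor of $\upgamma_2 = \upgamma_2' t_{\upgamma_2}$ satisfying conditions (i) and (ii) of the Anick chain definition (being itself in $C_{n-1}$), contradicting condition~(iii) for $\upgamma_2$. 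Once incomparability is established, the right ideals $t_{\upgamma_1}\Lambda$ and $t_{\upgamma_2}\Lambda$ must intersect trivially in $\Lambda$, since each element of $t_{\upgamma_i}\Lambda$ is a linear combination of nonzero paths beginning with $t_{\upgamma_i}$, and no such path can simultaneously begin with two incomparable prefixes. Combined with the direct sum decomposition of $kC_{n-2}\otimes \Lambda$ by $(n-2)$-chains, this yields $\Omega^n(\Bbbk_\Lambda) \cong \bigoplus_{\upgamma \in C_{n-1}} t_\upgamma\Lambda$, with the base case $n=1$ reducing to the familiar $\mathfrak{r} = \bigoplus_{a \in Q_1} a\Lambda$.

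The left module statement is obtained by applying the same argument to $\Lambda^{\rm op}$: right Anick chains of $\Lambda^{\rm op}$ are left Anick chains of $\Lambda$, and the tails for $\Lambda^{\rm op}$ correspond to the heads $h_\upgamma$ for $\Lambda$. By Bardzell's theorem $C'_n = C_n$ recalled after Theorem~\ref{th:AnickTor}, the indexing sets agree. The main (and essentially only non-routine) obstacle is the incomparability lemma; its proof is short but hinges on a careful invocation of the minimality condition in the Anick chain definition.
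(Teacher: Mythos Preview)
Your proof is correct and follows essentially the same approach as the paper: identify $\Omega^n(\Bbbk_\Lambda)$ with the image of the Anick differential, then use minimality (condition~(iii)) of Anick chains to show that two distinct $(n-1)$-chains sharing the same $(n-2)$-chain predecessor have incomparable tails, forcing the sum to be direct. The paper's write-up is nearly identical, including the separate handling of $n=1$ and the dual argument for left modules.
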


\begin{proof}
Recall that the minimal resolution of $(P_*, \partial) \xrightarrow{\sim} \Bbbk$ is given by the Anick resolution 
	\begin{align*} 
		\cdots \to	kC_{n-1} \otimes_\Bbbk \Lambda \xrightarrow{\partial_{n-1}} kC_{n-2} \otimes_\Bbbk \Lambda \xrightarrow{\partial_{n-2}} \cdots  \xrightarrow{\partial_1} kC_0 \otimes_\Bbbk \Lambda \xrightarrow{\partial_0} \Lambda \to 0 
	\end{align*} 
with $n$-th term $P_n = kC_{n-1} \otimes_\Bbbk \Lambda$ and differential $\partial_{n-1}(\upgamma) = \upgamma' \otimes t_\upgamma$, and we can identify $\Omega^n \Bbbk = {\rm Im}(\partial_{n-1})$ explicitly. Consider the identification of projective modules 
	\begin{align} 
		kC_{n-1} \otimes_\Bbbk \Lambda = \bigoplus_{\upgamma \in C_{n-1}} e_{s(\upgamma)} \Lambda 
	\end{align}
	given by sending $\upgamma \otimes 1$ to $e_{s(\upgamma)}$. The image of $\partial_{n-1}$ then takes the form 
	\begin{align} 
	{\rm Im}(\partial_{n-1}) = \sum_{\upgamma \in C_{n-1}} t_{\upgamma} \Lambda \subseteq \bigoplus_{\upgamma' \in C_{n-2}} e_{s(\upgamma')} \Lambda 
	\end{align}
	where by abuse of notation we let $\upgamma' \in C_{n-2}$ run over all $(n-2)$-chains, and we write a $(n-1)$-chain as $\upgamma = \upgamma' t_\upgamma$ so that $t_\upgamma \Lambda \subseteq e_{s(\upgamma')} \Lambda$ sits as a submodule in the corresponding copy of $e_{s(\upgamma')}\Lambda$. Moreover, here we must assume that $n \geq 2$, but the case $n = 1$ is treated similarly by replacing the righthand side by $\Lambda$. 

Now note that this sum is in fact direct. Assume that $\upgamma_1 = \upgamma_1' t_{\upgamma_1}$ and $\upgamma_2 = \upgamma_2' t_{\upgamma_2}$ are $(n-1)$-chains and $t_{\upgamma_1}\Lambda$, $t_{\upgamma_2} \Lambda$ are sent into the same summand $e_{s(\upgamma')}\Lambda$, meaning that $\upgamma_1' = \upgamma_2' = \upgamma'$. If $t_{\upgamma_1}\Lambda \cap t_{\upgamma_2} \Lambda \neq 0$, then there is a path left divisible by both tails $t_{\upgamma_1}$ and $t_{\upgamma_2}$, and so one tail must divide the other. As $\upgamma'_1 = \upgamma_2'$, one of $\upgamma_1$, $\upgamma_2$ then divides the other, contradicting minimality. Hence we obtain: 
	\begin{align} 
	{\rm Im}(\partial_{n-1}) = \bigoplus_{\upgamma \in C_{n-1}} t_{\upgamma} \Lambda \subseteq \bigoplus_{\upgamma' \in C_{n-2}} e_{s(\upgamma')} \Lambda, 
	\end{align}
	as required. The second claim is dual. 
\end{proof}

Our next step is to characterise Gorenstein monomial algebras in terms of Anick chains. We first need two auxiliary statements. The first of them collects several results of \cite{CSZ}. 

\begin{lem}[{\cite[Lem.~3.1, Prop.~4.6]{CSZ}}]\label{CSZlemma} Let $\Lambda$ be a monomial algebra and $p$ a non-trivial path in $\Lambda$. Then: 
	\begin{enumerate}[i)]
	\item $p\Lambda$ is Gorenstein-projective and non-projective if and only if $R(p) = \{q\}$ for $q$ a perfect path. 
	\item $\Lambda q$ is Gorenstein-projective and non-projective if and only if $L(q) = \{ p \}$ for $p$ a perfect path. 
	\item $p\Lambda$ is projective if and only if $R(p)$ is empty.
	\item $\Lambda q$ is projective if and only if $L(q)$ is empty. 
\end{enumerate}
\end{lem}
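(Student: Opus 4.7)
My plan begins by computing the first syzygy of $p\Lambda$ (and dually of $\Lambda q$) in terms of zero cofactors. The minimal projective cover $\varepsilon \colon e_{s(p)}\Lambda \twoheadrightarrow p\Lambda$ sending $e_{s(p)} \mapsto p$ has kernel equal to the right annihilator $\mathrm{ann}_r(p) \subseteq e_{s(p)}\Lambda$. Since $\Lambda$ is monomial, $\mathrm{ann}_r(p)$ has a basis of nonzero paths $r$ with $pr = 0$, and is generated as a right submodule by those minimal under this property, that is, by $R(p)$. Thus $\Omega(p\Lambda) = \sum_{q \in R(p)} q\Lambda$, and I would show that this sum is direct by an overlap argument analogous to the one in the proof of Proposition \ref{syzygies}: if $q_1 \Lambda \cap q_2 \Lambda \neq 0$ for distinct $q_1, q_2 \in R(p)$, one of the $q_i$ would be a left divisor of the other, contradicting the minimality built into the definition of $R(p)$. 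The dual statement $\Omega(\Lambda q) = \bigoplus_{p \in L(q)} \Lambda p$ follows by passing to the opposite algebra.

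Statements (iii) and (iv) are then immediate corollaries: $p\Lambda$ is projective if and only if $\Omega(p\Lambda) = 0$ if and only if $R(p) = \emptyset$, and symmetrically for $\Lambda q$.

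For the $(\Leftarrow)$ direction of (i), I would suppose $R(p) = \{q\}$ with $q$ perfect, lying in some perfect cycle $(q_0, q_1, \dots, q_{r-1})$ with $q = q_0$, and set $p' := q_{r-1}$. The perfection of the pair $(p', q_0)$ gives $R(p') = \{q\}$, so both $p\Lambda$ and $p'\Lambda$ arise as quotients of $e_{s(p)}\Lambda$ by the same submodule $q\Lambda$, and are therefore isomorphic. Theorem \ref{GPclassification} identifies $p'\Lambda$ as Gorenstein-projective and non-projective, and the property transfers to $p\Lambda$. Part (ii) follows by the same argument applied in $\Lambda^{\rm op}$.

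The $(\Rightarrow)$ direction of (i) is the most delicate step, and I expect it to be the main obstacle. Assuming $p\Lambda$ is Gorenstein-projective and non-projective, one has $R(p) \neq \emptyset$ by (iii), and every summand $q\Lambda$ of $\Omega(p\Lambda) = \bigoplus_{q \in R(p)} q\Lambda$ is itself Gorenstein-projective. The hard work is to show that $R(p) = \{q\}$ is a singleton with $q$ perfect. I plan to extract singleton-ness from the vanishing $\Ext_\Lambda^1(p\Lambda, \Lambda) = 0$ together with reflexivity $p\Lambda \cong p\Lambda^{**}$: dualising the minimal presentation gives the map $\Lambda e_{s(p)} \to \bigoplus_{q \in R(p)} \Lambda e_{s(q)}$ sending $x \mapsto (xq)_{q}$, whose required surjectivity combined with reflexivity should force only one minimal right zero cofactor. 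For perfectness of the unique $q$, I would iterate the construction: $\Omega(q\Lambda)$ is again Gorenstein-projective, so the same argument yields $R(q) = \{q'\}$ singleton and non-empty, and finiteness of $\Lambda$ then forces the sequence of successive $R$-cofactors to cycle, producing a perfect cycle containing $q$. Part (ii) is dual.
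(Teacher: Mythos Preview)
The paper does not prove this lemma; it simply records results of Chen--Shen--Zhou \cite{CSZ} without reproducing their arguments, so there is no ``paper's own proof'' to compare against. Your syzygy computation $\Omega(p\Lambda) = \bigoplus_{q \in R(p)} q\Lambda$ and the deductions of (iii), (iv), and the $(\Leftarrow)$ directions of (i)--(ii) are correct and clean.

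For the $(\Rightarrow)$ direction of (i) there are two genuine issues. First, the map you write down, $\Lambda e_{s(p)} \to \bigoplus_{q} \Lambda e_{s(q)}$, $x \mapsto (xq)_q$, is \emph{never} surjective: each $xq$ lies in the radical. The map whose surjectivity actually follows from $\Ext^1_\Lambda(p\Lambda,\Lambda)=0$ is $\Lambda e_{s(p)} \twoheadrightarrow (\Omega(p\Lambda))^* = \bigoplus_q (q\Lambda)^*$, obtained by dualising the short exact sequence $0 \to \Omega(p\Lambda) \to e_{s(p)}\Lambda \to p\Lambda \to 0$. Since each $(q\Lambda)^*$ is nonzero and any quotient of the indecomposable projective $\Lambda e_{s(p)}$ has simple top, this forces $|R(p)|=1$; reflexivity plays no role.

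Second, and more seriously, your iteration only yields the one-sided conditions $R(q_i)=\{q_{i+1}\}$. A perfect cycle also requires $L(q_{i+1})=\{q_i\}$, and mere eventual periodicity of a forward-determined sequence does not place $q_0=q$ in the periodic part. To close this gap you must run the dual argument: the identification $(q_i\Lambda)^* = \{x \in \Lambda e_{t(q_{i+1})} : xq_{i+1}=0\} = \bigoplus_{w \in L(q_{i+1})} \Lambda w$, together with the fact that $(q_i\Lambda)^*$ is a quotient of the indecomposable $\Lambda e_{s(q_{i-1})}$, gives $|L(q_{i+1})|=1$. One must then still check that this unique element is $q_i$ itself and not a proper right divisor, which requires a further short argument (e.g.\ comparing with the minimal complete resolution supplied by Theorem~\ref{GPclassification}). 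As written, your sketch stops short of this.
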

\begin{rem}
	This lemma is ``best possible'' in that Chen-Shen-Zhou construct \cite[Ex.~4.5]{CSZ} an example of a non-perfect path $p$ with $\Lambda p$ Gorenstein-projective, along with an isomorphism $\Lambda p \cong \Lambda p'$ for some perfect path $p'$.  
\end{rem} 

The second result we need is as follows. 

\begin{lem}\label{codepthlemma} Let $\Lambda$ be a Gorenstein algebra, and let $(P_*, \partial) \xrightarrow{\sim} \Bbbk$ be the minimal projective resolution. If $\Omega^n \Bbbk = {\rm Im}(\partial: P_n \to P_{n-1})$ has a non-trivial projective summand, then $n \leq \dim \Lambda$. 
\end{lem}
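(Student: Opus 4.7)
The plan is to prove the contrapositive: if $n > d := \dim \Lambda$, then $\Omega^n \Bbbk$ admits no nonzero projective summand. The key ingredient is Zaks' theorem, which ensures that the injective dimension $\idim \Lambda_\Lambda = d$ when $\Lambda$ is Gorenstein, so that $\Ext^n_\Lambda(\Bbbk, \Lambda) = 0$ for all $n > d$. It is therefore enough to show that the existence of a nonzero projective summand of $\Omega^n \Bbbk$ forces $\Ext^n_\Lambda(\Bbbk, \Lambda) \neq 0$.

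Suppose then that $\Omega^n \Bbbk \cong P \oplus M$ with $P \neq 0$ projective. Since every indecomposable projective right $\Lambda$-module is of the form $e_i \Lambda$ for some $i \in Q_0$, any such summand embeds canonically into $\Lambda$ itself. Composing the projection onto an indecomposable summand of $P$ with this embedding, we obtain a $\Lambda$-linear map
\[ f \colon \Omega^n \Bbbk \twoheadrightarrow e_i \Lambda \hookrightarrow \Lambda \]
whose image contains the idempotent $e_i \notin \rad \Lambda$. Pulled back along the surjection $P_n \twoheadrightarrow \Omega^n \Bbbk$, this yields an $n$-cocycle representing a class $[f] \in \Ext^n_\Lambda(\Bbbk, \Lambda)$.

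To complete the argument, we must show $[f] \neq 0$; equivalently, that $f$ does not extend along the inclusion $\iota \colon \Omega^n \Bbbk \hookrightarrow P_{n-1}$. This is where minimality of $(P_*, \partial)$ intervenes: we have $\iota(\Omega^n \Bbbk) \subseteq \rad P_{n-1} = P_{n-1} \cdot \rad \Lambda$, so any hypothetical extension $g \colon P_{n-1} \to \Lambda$ would satisfy
\[ f(\Omega^n \Bbbk) = g(\iota(\Omega^n \Bbbk)) \subseteq g(P_{n-1}) \cdot \rad \Lambda \subseteq \rad \Lambda, \]
contradicting $e_i \in f(\Omega^n \Bbbk)$. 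Hence $[f] \neq 0$ and the inequality $n \leq d$ follows.

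The only conceptual step is the very first one, namely recasting the statement as a vanishing statement for $\Ext^*_\Lambda(\Bbbk, \Lambda)$; the remainder is a standard minimality argument for projective resolutions over a semiperfect algebra, and we foresee no serious obstacle beyond keeping careful track of the right-module conventions.
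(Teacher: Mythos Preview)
Your argument is correct and takes a genuinely different route from the paper. The paper argues by contradiction via a diagram chase: writing $\Omega^n\Bbbk = N \oplus P$, it fits $P \hookrightarrow P_{n-1}$ into a short exact sequence $0 \to N \to M \to \Omega^{n-1}\Bbbk \to 0$ with $M = P_{n-1}/P$, observes that for $n > \dim\Lambda$ both $N$ and $\Omega^{n-1}\Bbbk$ are Gorenstein-projective, hence so is the extension $M$, and since $M$ visibly has finite projective dimension it must be projective, forcing $P \hookrightarrow P_{n-1}$ to split and contradicting minimality. Your approach bypasses the theory of Gorenstein-projective modules entirely: you go straight to the vanishing $\Ext^n_\Lambda(\Bbbk,\Lambda)=0$ for $n>\dim\Lambda$ and produce an explicit nonzero cocycle from the projective summand, using only the radical condition on a minimal resolution. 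Your proof is shorter and more elementary; the paper's proof is more structural and fits its surrounding narrative, where Gorenstein-projectives are already in play and the closure properties invoked are being used elsewhere. Either is perfectly adequate for the lemma.
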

\begin{proof}Decompose $\Omega^n \Bbbk = N \oplus P$ with $P$ a non-trivial projective summand. Consider the short exact sequence 
	\begin{align}
		0 \to \Omega^n \Bbbk \xrightarrow{\iota} P_{n-1} \xrightarrow{\partial} \Omega^{n-1}\Bbbk \to 0. 
	\end{align}
	If $n > \dim \Lambda$, we show that the embedding $\iota_{| P}: P \hookrightarrow P_{n-1}$ splits, and this will contradict minimality of the resolution. Consider the commutative diagram of short exact sequences 
	\[
		\xymatrix{
			& 0  &&& 0  \\ 
			& N \ar@/_/[d]_{s} \ar[rr]^-{\pi \iota s} \ar[u] && M \ar[ur] \ar@{..>}[d]  \\ 
			0 \ar[r] & \Omega^n \Bbbk \ar@/_/[d] \ar[u] \ar[r]^-{\iota} & P_{n-1} \ar[r] \ar[ur]^-{\pi} & \Omega^{n-1} \Bbbk \ar[r] & 0  \\ 
			& P \ar[u] \ar[ur]^-{\iota_{| P}}  \\ 
			0 \ar[ur] & 0 \ar[u] \\ 
		}
	\]
	A diagram chase shows that we have an induced exact sequence
	\begin{align}
		0 \to N \to M \to \Omega^{n-1} \Bbbk \to 0.	
	\end{align}
	Now, Gorenstein-projectives are closed under summands and extensions, and must be projective the moment they have finite projective dimension. If $n > \dim \Lambda$, then $\Omega^{n-1} \Bbbk$ and $\Omega^n \Bbbk$, and therefore also $N$, are Gorenstein-projective, and thus so is the extension $M$. But the diagonal short exact sequence shows that $\pdim M < \infty$ and so $M$ must be projective. Hence the map $\iota_{| P}: P \hookrightarrow P_{n-1}$ is split as claimed, contradicting minimality of the resolution. 
\end{proof}

Results of Lu--Zhu \cite[Rem. 4.12]{LZ} give a simple description of the self-injective monomial algebras: these are precisely the algebras $\Lambda = \Lambda_{m,n}$ whose quiver is a simple oriented cycle on $m$ vertices with relations consisting of all paths of length $n$. We now show how to characterise the Gorenstein algebras of dimension at most $d+1$ in terms of the structure of $d$-chains; note that this forces $0 \leq d < \infty$, and so our results complement the case of Gorenstein dimension zero of Lu and Zhu. Their paper also characterises monomial algebras of Gorenstein dimension at most~$1$; we leave it to the reader to compare \cite[Th.~5.4]{LZ} with the corresponding particular case of the result below.

\begin{thm}\label{gorensteincharacterisation} Let $\Lambda$ be a monomial algebra. The following are equivalent, for $d\ge 0$: 
	\begin{enumerate}[i)] 
	\item $\Lambda$ is Gorenstein of dimension at most $d+1$. 
	\item For every $d$-chain $\upgamma$, either $R(t_\upgamma)$ is empty or equal to $\{ p \}$ with $p$ perfect. 
	\item For every $d$-chain $\upgamma$, either $L(h_\upgamma)$ is empty or equal to $\{ q \}$ with $q$ perfect. 
	\end{enumerate}
Moreover if $\dim \Lambda < d+1$, then $t_\upgamma$ and $h_\upgamma$ themselves are perfect, and in particular the
sets above are never empty.  
\end{thm}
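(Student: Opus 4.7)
The approach is to reduce both (ii) and (iii) to the Chen--Shen--Zhou classification of cyclic Gorenstein-projective modules (Lemma \ref{CSZlemma}) by exploiting the direct sum decomposition of syzygies of $\Bbbk$ afforded by the Anick resolution.

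To prove (i) $\iff$ (ii): by Proposition \ref{syzygies},
\[
\Omega^{d+1}(\Bbbk_\Lambda) \cong \bigoplus_{\upgamma \in C_d} t_\upgamma \Lambda,
\]
and since Gorenstein-projectivity is closed under finite direct sums and direct summands, this module is Gorenstein-projective if and only if each summand $t_\upgamma \Lambda$ is. Lemma \ref{CSZlemma}, parts i) and iii), then characterises Gorenstein-projectivity of these cyclic modules: $t_\upgamma \Lambda$ is Gorenstein-projective iff it is projective (equivalently $R(t_\upgamma) = \emptyset$) or $R(t_\upgamma) = \{p\}$ with $p$ perfect. Combining with Proposition \ref{gorensteindefect}, which identifies the condition ``$\Lambda$ Gorenstein of dimension at most $d+1$'' with ``$\Omega^{d+1}\Bbbk$ is Gorenstein-projective'', yields (i) $\iff$ (ii). The equivalence (i) $\iff$ (iii) follows by the same argument applied to left modules, using the left-module version of Proposition \ref{syzygies} and parts ii) and iv) of Lemma \ref{CSZlemma} (equivalently, by passing to $\Lambda^{\rm op}$, which is again monomial).

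For the moreover statement, assume $\dim \Lambda < d+1$. By Lemma \ref{codepthlemma} applied with $n = d+1$, the syzygy $\Omega^{d+1}\Bbbk$ has no non-trivial projective summand; hence no $t_\upgamma \Lambda$ is projective, and $R(t_\upgamma)$ must be non-empty, whence a singleton $\{p\}$ with $p$ perfect. To upgrade this to the stronger conclusion that $t_\upgamma$ itself is perfect, I would use the combinatorial observation that the $d$-chains extending a fixed $(d-1)$-chain $\upgamma'$ are in bijection with $R(t_{\upgamma'})$ via $\tau \mapsto \upgamma' \tau$, with the new tail equal to $\tau$; this follows by matching the minimality condition (iii) in the chain definition with the beginning-segment minimality in the definition of $R$. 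For $d \geq 1$, since $\dim \Lambda \leq d = (d-1) + 1$, the already-proven equivalence (i) $\iff$ (ii) applies at parameter $d-1$, so for $\upgamma = \upgamma' t_\upgamma$ the set $R(t_{\upgamma'})$ lies in $\{\emptyset, \{q\}\}$ with $q$ perfect; since $t_\upgamma \in R(t_{\upgamma'})$ this set is non-empty, forcing $R(t_{\upgamma'}) = \{t_\upgamma\}$ and $t_\upgamma$ perfect. The degenerate case $d = 0$ (where $t_\upgamma = \alpha$ is an arrow) is handled directly by a length argument: from $R(\alpha) = \{p\}$ with $p$ perfect we have $L(p) = \{p'\}$ for some perfect $p'$, and $\alpha p = 0$ forces $p'$ to be a non-trivial end-segment of $\alpha$, hence $\alpha = p'$. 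The dual argument via left modules gives perfection of $h_\upgamma$.

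The main obstacle I anticipate is carefully verifying the chain/cofactor bijection used to bootstrap the perfection from the $d$-th to the $(d-1)$-st syzygy; once that is in hand, the remainder of the proof is a direct chaining of results already established in the excerpt.
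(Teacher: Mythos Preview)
Your proposal is correct and follows essentially the same route as the paper. The only minor deviations are cosmetic: you invoke Lemma~\ref{codepthlemma} at $n=d+1$ uniformly to get non-emptiness of $R(t_\upgamma)$ before proving perfection of $t_\upgamma$, whereas the paper only needs that lemma in the degenerate case $d=0$ (for $d\ge 1$, perfection of $t_\upgamma$ already forces $R(t_\upgamma)\neq\emptyset$); and the ``chain/cofactor bijection'' you flag as a potential obstacle is exactly the content of Lemma~\ref{extendingchains}, which the paper states (slightly later) and uses here only in the weak form $t_\upgamma\in R(t_{\upgamma'})$, so no genuine difficulty remains.
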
 
\begin{proof}
Prop. \ref{gorensteindefect} and Prop. \ref{syzygies} show that $\Lambda$ is Gorenstein of dimension $\dim \Lambda \leq d+1$ if and only if $\Omega^{d+1} \Bbbk = \bigoplus_{\upgamma \in C_d} t_\upgamma \Lambda$ is Gorenstein-projective, and by Lemma \ref{CSZlemma} this occurs if and only if $R(t_\upgamma)$ is empty or consists of a single perfect path. This shows the equivalence i)-ii), with i)-iii) formally dual. 

For the last claim assume that actually $\dim \Lambda \leq d$. First assume that $d \geq 1$. Let $\upgamma \in C_d$, and write it as $\upgamma = \upgamma' t_{\upgamma}$ for $\upgamma' \in C_{d-1}$. By the above we have $R(t_{\upgamma'}) = \emptyset$ or $R(t_{\upgamma'}) = \{ p \}$ for $p$ perfect. But $t_{\upgamma} \in R(t_{\upgamma'}) \neq \emptyset$, which shows that $t_{\upgamma} = p$ is perfect. Dually $h_\upgamma$ is perfect. 

For the case $d = 0$ we have $\upgamma = t_\upgamma$, which must then be an arrow. Lemma \ref{codepthlemma} shows that $t_\upgamma \Lambda$ cannot be projective and so $R(t_\upgamma) \neq \emptyset$, which forces $R(t_\upgamma) = \{ p \}$ for $p$ perfect. Letting $L(p) = \{ q \}$, we see that $t_\upgamma p = 0$ forces $q$ to be a right divisor of $t_\upgamma$. But $t_\upgamma$ is an arrow and so $t_\upgamma = q$ is perfect, as claimed. The case of $h_\upgamma$ is dual as before. 
\end{proof}

Ignoring the precise dimension, we obtain a cleaner characterisation of Gorenstein monomial algebras. 
\begin{cor} Let $\Lambda$ be a monomial algebra. The following are equivalent: 
	\begin{enumerate}[i)]
	\item $\Lambda$ is Gorenstein.
	\item There exists an $n_0 \in \N$ such that every $n$-chain $\upgamma$ for $n \geq n_0$ has tail $t_\upgamma = p$ given by a perfect path.
	\item There exists an $n_0 \in \N$ such that every $n$-chain $\upgamma$ for $n \geq n_0$ has head $h_\upgamma = q$ given by a perfect path. 
\end{enumerate}
\end{cor}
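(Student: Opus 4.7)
The plan is to deduce this corollary directly from Theorem \ref{gorensteincharacterisation} by choosing the parameter $d$ appropriately and exploiting the fact that the strong conclusion in the ``last claim'' of that theorem (namely, that $t_\upgamma$ itself is perfect) is only a hair's breadth away from condition ii) of the theorem (that $R(t_\upgamma)$ is $\{p\}$ for a perfect $p$).

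For the implication (i) $\Rightarrow$ (ii), I would set $n_0 := \dim \Lambda$ and, for each $n \geq n_0$, apply Theorem \ref{gorensteincharacterisation} with its internal parameter taken to be $n$ itself. The hypothesis $\dim \Lambda < n + 1$ is then satisfied, so the ``last claim'' of that theorem yields that $t_\upgamma$ is perfect for every $n$-chain $\upgamma$.

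For (ii) $\Rightarrow$ (i), I would pick $d := n_0$ and check that the corollary's hypothesis is strong enough to verify condition ii) of Theorem \ref{gorensteincharacterisation} at that value of $d$. Indeed, if $t_\upgamma = p$ is perfect, then by definition $p$ sits in some perfect pair $(p, q)$, which means $R(t_\upgamma) = R(p) = \{q\}$ with $q$ itself perfect; in particular $R(t_\upgamma)$ is either empty (never, in this case) or a singleton consisting of a perfect path. Theorem \ref{gorensteincharacterisation} then gives that $\Lambda$ is Gorenstein, in fact of dimension at most $n_0 + 1$.

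Finally, the equivalence (i) $\Leftrightarrow$ (iii) follows formally from (i) $\Leftrightarrow$ (ii) by passing to the opposite algebra $\Lambda^{\rm op}$, under which right Anick chains and tails correspond to left Anick chains and heads, while the Gorenstein property is preserved. There is no real obstacle here; the only step that requires any care is being precise about which numerical parameter of Theorem \ref{gorensteincharacterisation} is playing the role of the ``$n$'' appearing in the corollary's statement, so as to land in the regime where the stronger conclusion ($t_\upgamma$ perfect, not just $R(t_\upgamma) = \{p\}$ perfect) is available.
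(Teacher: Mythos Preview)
Your proof is correct and is exactly the intended derivation from Theorem \ref{gorensteincharacterisation}: the forward direction uses the ``moreover'' clause with $d=n$ for each $n\geq \dim\Lambda$, the backward direction verifies condition ii) of the theorem at $d=n_0$ (using that the successor of a perfect path in its perfect cycle is again perfect), and the head version follows by duality. The paper states the corollary without proof, and your argument is the one the reader is meant to supply.
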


As an example of how our results can be applied, let us give a characterisation of local Gorenstein monomial algebras. By a local monomial algebra, we mean algebras $\Lambda$ of the form $\Lambda = k\langle S \rangle/I$ with $S$ a non-empty finite set and $I \subseteq (S)^2$ generated by monomial relations. (We still assume $\Lambda$ finite dimensional.) 

\begin{prop}\label{prop:LocalMonomial} Let $\Lambda = k\langle S \rangle/I$ be a local monomial algebra. Then $\Lambda$ is Gorenstein if and only if $\Lambda \cong k[t]/(t^n)$ for some $n \geq 2$. 
\end{prop}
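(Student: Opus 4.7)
The direction $(\Leftarrow)$ is immediate, since $k[t]/(t^n)$ is Frobenius, hence self-injective, hence Gorenstein. For the converse, assume $\Lambda = k\langle S\rangle/I$ is local, monomial, and Gorenstein. The plan is to prove that $|S| = 1$ and that the unique minimal relation is a power of the arrow, by combining the Anick-chain criterion of Theorem~\ref{gorensteincharacterisation} with a careful analysis of the right cofactor sets $R(\alpha)$ of arrows.

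First I would show that every arrow $\alpha \in S$ is a perfect path. Since $\Lambda$ is finite-dimensional, $\alpha$ is nilpotent, and the minimal $j_\alpha \geq 2$ with $\alpha^{j_\alpha} \in I$ automatically yields a minimal monomial relation. An induction then shows that $\alpha^{j_\alpha}, \alpha^{j_\alpha+1}, \alpha^{2j_\alpha}, \alpha^{2j_\alpha+1}, \ldots$ forms a sequence of Anick $n$-chains whose tails alternate between $\alpha^{j_\alpha-1}$ (for odd $n$) and $\alpha$ (for even $n$). Thus $\alpha$ arises as the tail of Anick chains of arbitrarily large degree, and the corollary to Theorem~\ref{gorensteincharacterisation} forces $\alpha$ to be a perfect path.

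Next I would pin down the minimal relation through each arrow. Writing $R(\alpha) = \{q_\alpha\}$, one has $\alpha q_\alpha \in I$, so some minimal monomial relation $r$ divides $\alpha q_\alpha$ in $kQ$. The relation $r$ cannot lie inside $\alpha$ (too short) or inside $q_\alpha$ (which is nonzero in $\Lambda$), so it must straddle as $r = \alpha r'$ with $r'$ a prefix of $q_\alpha$. Then $\alpha r' \in I$ implies some suffix of $r'$ is a minimal element of $R(\alpha) = \{q_\alpha\}$ and hence equals $q_\alpha$; but that suffix has length at most $|r'| \leq |q_\alpha|$, forcing $r' = q_\alpha$ and therefore $r = \alpha q_\alpha$. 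The same argument shows $\alpha q_\alpha$ is the unique minimal relation starting with $\alpha$, so comparing with the minimal relation $\alpha^{j_\alpha}$ obtained above gives $q_\alpha = \alpha^{j_\alpha-1}$. Consequently every minimal relation of $\Lambda$ is a pure power $\alpha^{j_\alpha}$ of some arrow.

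Finally, if $|S| \geq 2$ and $\alpha \neq \beta$ are distinct arrows, then any alternating word $\alpha\beta\alpha\beta \cdots$ contains no two consecutive identical letters, hence no pure-power substring, hence no minimal relation; such words are therefore nonzero in $\Lambda$ in arbitrary length, contradicting $\dim_k \Lambda < \infty$. Hence $|S| = 1$ and $\Lambda \cong k[\alpha]/(\alpha^{j_\alpha}) \cong k[t]/(t^n)$ with $n = j_\alpha \geq 2$. The main obstacle I anticipate is verifying the Anick-chain minimality clause (iii) for the pure-power candidates $\alpha^{m}$ despite the possible presence of other minimal relations, and carefully distinguishing ``suffix of the prefix $r'$ of $q_\alpha$'' from ``suffix of $q_\alpha$'' in the uniqueness argument for the minimal relation through $\alpha$.
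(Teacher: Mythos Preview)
Your proof is correct and follows essentially the same strategy as the paper's: both establish that each arrow $\alpha$ is perfect by exhibiting the Anick chains $\alpha^{kj_\alpha}$ and $\alpha^{kj_\alpha+1}$ with tail $\alpha$, and both then use $R(\alpha)=\{\alpha^{j_\alpha-1}\}$ to reach a contradiction when $|S|\ge 2$. The only difference is in the endgame: you take the extra step of proving that \emph{every} minimal relation is a pure power $\alpha^{j_\alpha}$, whence alternating words $\alpha\beta\alpha\beta\cdots$ are nonzero in arbitrary length; the paper instead observes directly that finite-dimensionality forces some minimal relation of the alternating form $tsts\cdots$, and then the fact that $t^{n-1}$ must left-divide $sts\cdots$ (from $R(t)=\{t^{n-1}\}$) is already a contradiction.

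Your acknowledged obstacle about Anick-chain minimality for the candidates $\alpha^m$ is not a genuine issue: every left divisor of $\alpha^m$ is again a power of $\alpha$, so only the relation $\alpha^{j_\alpha}$ is visible in the chain recursion, and the induction goes through exactly as you sketch. One small correction: in your Step~2 the word ``suffix'' should be ``prefix'', since minimal right zero cofactors are minimal with respect to left segments; but in fact $r'$ itself already lies in $R(\alpha)$ (no proper prefix of $r'$ can kill $\alpha$, as $\alpha r'$ is a \emph{minimal} relation), so $r'=q_\alpha$ follows immediately and the detour through suffixes is unnecessary.
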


\begin{proof}
We show that $|S| \geq 2$ implies that $\Lambda$ is not Gorenstein. Let $s \neq t \in S$ be distinct elements, and note that $s^m, t^n \in R$ for some minimal $m,n \geq 2$ by finite dimensionality of $\Lambda$. The elements $t, t^n, t^{n+1}, t^{2n}, \dots, t^{kn}, t^{kn+1}, \dots$ are all Anick chains and in particular $t^{kn+1}$ has tail $t$. If $\Lambda$ is Gorenstein then $t^{kn+1}$ has perfect tail for $k \gg 0$ and so $t$ must be a perfect path. Similarly $s$ is perfect. 

We have $(ts)^l \in I$ for $l \gg 0$ and so the set of relations of $\Lambda$ must contain a string of the form $tstst\dots$ or $ststs\dots$, say the first one without loss of generality. The perfect pair $(t,t^{n-1})$ then shows that $t^{n-1}$ left divides $stst\dots$, a contradiction. 
\end{proof}

We were not able to locate a precise reference for Proposition \ref{prop:LocalMonomial} in the literature, although we have no doubts that it is known to experts. In particular, it admits the following short proof not relying on combinatorics of Anick chains. A local Gorenstein algebra $A$ must be self-injective, as follows from the Auslander--Buchsbaum Formula for noncommutative local rings of Wu-Zhang (see \cite[Th. 0.3]{WZ}, using $\Hom_k(A, k)$ for the pre-balanced dualizing complex). Applying \cite[Rem. 4.12]{LZ}, we see that $A$ is a local Gorenstein monomial algebra if and only if $A \cong k[t]/(t^n)$ for some $n$.

\subsection{Example of a Gorenstein monomial algebra}\label{sec:gorensteinexample}

Let $d \geq 2$. Let $\Lambda_d = kQ_d/I_d$, with quiver $Q_d$ as shown in the picture and relations $I_d = (R_d)$ given by $R_d = \{ \beta_1\beta_2,\ \! \beta_2\beta_1,\ \!\alpha_{i} \alpha_{i+1} \ | \ 1\leqslant i \leqslant d-1 \}$. Then $\Lambda_d$ is Gorenstein of dimension $d$. 
\begin{wrapfigure}{r}{0.35\textwidth}
\vspace{-1 em}
\begin{tikzpicture}[auto, scale = 0.55]
    \foreach \a in {0,1,3}
    {
        \node (u\a) at ({\a*45+180}:3){$\bullet$};
        \draw [latex-,
        		line width = 1.15 pt,
        		domain=\a*45+185:\a*45+220] plot ({3*cos(\x)}, {3*sin(\x)});}
        \node (A) at (0,4){$\bullet$};
        \node (B) at (0,2){$\bullet$};
        \draw[-latex,line width = 1.15 pt] (B) to [out=130,in=235](A);
         \draw[-latex,line width = 1.15 pt] (A) to [out=300,in=60](B);
  \node (b1) at (0.95,3){$\beta_2$};
      \node (b2) at (-0.95,3){$\beta_1$};
        \node (d1) at (180:3){$\bullet$};
        \node (u1) at (270:3){$\bullet$};
        \node (u5) at (0:3){$\bullet$};
    \node (a1) at (45:2.15){$\delta_1$};
    \node (a2) at (135:2.15){$\delta_2$};
    \node (a3) at (247.5:2.5){$\alpha_2$};
    \node (a5) at (202.5:2.5){$\alpha_1$};
    \node (a6) at (-22.5:2.5){$\alpha_d$};
    \draw [latex-,	line width = 1.15 pt,
        		domain=45*2+185:45*2+185+15] plot ({3*cos(\x)}, {3*sin(\x)});
    \draw [line width = 1.15 pt,
        		domain=45*2+185+25:45*2+220] plot ({3*cos(\x)}, {3*sin(\x)});
     \draw [dotted, line width = 1.15 pt,
        		domain=45*2+185+15:45*2+185+25] plot ({3*cos(\x)}, {3*sin(\x)});
    \draw [-latex,	line width = 1.15 pt] (B) -- (u5);
    \draw [-latex,	line width = 1.15 pt] (u0) -- (B);
\end{tikzpicture}
\end{wrapfigure}
Indeed all Anick $(d-1)$-chains are of the form $\beta_1 \beta_2 \beta_1 \dots$ or $\beta_2 \beta_1 \beta_2 \dots$ with tails $\beta_{i}$ perfect, or by the path $\alpha_1 \alpha_2 \dots \alpha_d$ whose tail $\alpha_d$ satisfies $R(\alpha_d) = \emptyset$, and so $\Lambda_d$ is Gorenstein of dimension $\leq d$ by Theorem \ref{gorensteincharacterisation}. 
However $\alpha_1 \dots \alpha_{d-1}$ is an $(d-2)$-chain with tail $\alpha_{d-1}$ and $R(\alpha_{d-1}) = \{ \alpha_d \}$ is neither empty nor is $\alpha_d$ perfect, and so $\Lambda_d$ is not Gorenstein of dimension $\leq d-1$. 
Finally, note that $\alpha_d \Lambda_d$ is a non-trivial projective summand in $\Omega^d \Bbbk$, and that such projective summands do not occur in higher degrees as all further Anick chains are given by $\beta_1 \beta_2 \beta_1 \dots$ and $\beta_2 \beta_1 \beta_2 \dots$ with tails $\beta_i$.

\subsection{Perfect walks} In analysing Anick chains of sufficiently high degree over Gorenstein algebras, it will turn out useful to introduce paths which consist of walking along a perfect cycle. 

\begin{defn}A sequence of paths $(p_0, p_1, \dots, p_{l-1})$ is called a perfect walk if it can be extended to a perfect cycle ${\bf p} = (p_0, p_1, \dots, p_{l-1}, p_{l}, \dots, p_{s-1})$. We call $l \geq 1$ the length of the walk. 
\end{defn}

Note that the next and previous paths in a perfect walk are uniquely determined as we have $R(p_i) = \{ p_{i+1} \}$ and $L(p_i) = \{ p_{i-1} \}$, and in particular any perfect path $p = p_0$ can be extended in both directions to a perfect walk of any length
\begin{align}
	\dots, p_{-2}, p_{-1}, p_0, p_1, p_2, \dots	
\end{align}

By abuse of notation we will often say that a path $\alpha = p_0 p_1 \dots p_l$ is given by a perfect walk if $(p_0, p_1, \dots, p_l)$ forms a perfect walk. For instance, the paths $\beta_1 \beta_2 \beta_1\dots$ and $\beta_2 \beta_1 \beta_2\dots$ of example in Section \ref{sec:gorensteinexample} are given by perfect walks. We note that the decomposition of a path as a perfect walk is not in general unique.

\begin{exmp}\label{perfectwalkexample}Let $\Lambda = k[t]/(t^n)$ for $n \geq 2$. Then $t^{kn}$ can be written as a perfect walk in two ways: 
	\begin{align}
		t^{kn} = t \cdot  t^{n-1} \cdots t \cdot  t^{n-1} = t^{n-1} \cdot t \cdots t^{n-1} \cdot t
	\end{align}
	corresponding to the two perfect walks $(t, t^{n-1}, \dots, t, t^{n-1})$ and $(t^{n-1}, t, \dots, t^{n-1}, t)$. 
\end{exmp}

In the above example, the path $t^{kn}$ forms a $(2k-1)$-chain, and one might be tempted to think that all perfect walks will give rise to Anick chains. This is not so:

\begin{exmp} Let $\Lambda = k[t]/(t^n)$ for $n \geq 3$. Consider the perfect cycle $(t^{n-1}, t)$. Then $(t^{n-1}, t, t^{n-1})$ is a perfect walk as it can be extended to $(t^{n-1}, t, t^{n-1}, t)$ but $t^{2n-1} = t^{n-1} \cdot t \cdot t^{n-1}$ is not an Anick chain. 
\end{exmp} 

We will be particularly interested in Anick chains that can be (partially) written as perfect walks, and so we will derive a few properties of such chains. The following lemma easily follows from the definition, we leave the proof to the reader. 
\begin{lem}[Extending Anick chains]\label{extendingchains} Let $\upgamma_n$ be an $n$-chain. Then: 
	\begin{enumerate}[i)]
	\item $\upgamma_{n+1} = \upgamma_n p$ is an $(n+1)$-chain with tail $t_{\upgamma_{n+1}} = p$ if and only if $p \in R(t_{\upgamma_n})$. 
	\item $\upgamma_{n+1} = q \upgamma_n$ is an $(n+1)$-chain with head $h_{\upgamma_{n+1}} = q$ if and only if $q \in L(h_{\upgamma_n})$. 
\end{enumerate}
\end{lem}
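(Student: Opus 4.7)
The plan is to prove (i) directly from the definition of Anick chain; statement (ii) then follows by applying (i) to the opposite algebra $\Lambda^{\mathrm{op}}$, under which left Anick chains of $\Lambda$ correspond to right Anick chains of $\Lambda^{\mathrm{op}}$ (as recalled earlier via Bardzell's theorem $C_n = C'_n$). A key tool used in both directions is a short \emph{straddling observation}: if $t_{\upgamma_n} q = 0$ in $\Lambda$, any monomial relation $r$ dividing $t_{\upgamma_n} q$ in $kQ$ must straddle the boundary between $t_{\upgamma_n}$ and $q$ (neither is zero in $\Lambda$); and if $q$ is minimal among left factors of itself that annihilate $t_{\upgamma_n}$, then $r$ must sit at the path-algebra right end of $t_{\upgamma_n} q$, for otherwise the left factor of $q$ cut out by $r$ would be a strictly shorter left factor of $q$ already annihilating $t_{\upgamma_n}$. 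In particular, for every $q \in R(t_{\upgamma_n})$, the product $t_{\upgamma_n} q$ has a monomial relation as right divisor.

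For the forward direction of (i), assume $\upgamma_{n+1} = \upgamma_n p \in C_{n+1}$ has tail $p$. Condition (ii) of the chain definition gives $t_{\upgamma_n} p = 0$, so $p$ is a right zero cofactor of $t_{\upgamma_n}$; it remains to verify minimality. If a proper path-algebra left factor $p^\ast \lneq p$ satisfied $t_{\upgamma_n} p^\ast = 0$, one could take $p^\ast$ minimal with this property and apply the straddling observation to get a right-divisor relation in $t_{\upgamma_n} p^\ast$. Then $\upgamma_n p^\ast$ would be a proper left divisor of $\upgamma_{n+1}$ satisfying conditions (i) and (ii) of the chain definition, contradicting condition (iii) for $\upgamma_{n+1}$. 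Hence $p \in R(t_{\upgamma_n})$.

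For the converse, assume $p \in R(t_{\upgamma_n})$; the straddling observation immediately yields conditions (i) and (ii) for $\upgamma_n p$ as an $(n+1)$-chain candidate. The substantive point is condition (iii), for which I first record an auxiliary \emph{no-nesting lemma}: if $\upgamma, \upgamma' \in C_n$ and $\upgamma'$ is a left divisor of $\upgamma$, then $\upgamma' = \upgamma$. This is immediate because $\upgamma'$ manifestly satisfies (i) and (ii) as an $n$-chain candidate, so if proper it would violate (iii) for $\upgamma$. Now suppose $\delta$ is a proper left divisor of $\upgamma_n p$ satisfying (i) and (ii); passing to a minimal such divisor, I may assume $\delta \in C_{n+1}$ with chain decomposition $\delta = \delta' d$, $\delta' \in C_n$. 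Since $\delta'$ and $\upgamma_n$ both belong to $C_n$ and are left divisors of $\upgamma_n p$ (hence comparable by length), the no-nesting lemma forces $\delta' = \upgamma_n$, whence $d$ is a proper left factor of $p$ with $t_{\upgamma_n} d = 0$, contradicting minimality of $p \in R(t_{\upgamma_n})$. This yields $\upgamma_n p \in C_{n+1}$, and a final application of the no-nesting lemma to the canonical decomposition $\upgamma_n p = \upgamma_n \cdot p$ identifies the tail as $p$.

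The main obstacle I anticipate is keeping the path-algebra and walking conventions straight inside the straddling observation, and in particular matching the paper's definition of $R(t_{\upgamma_n})$—whose minimality is measured on the path-algebra left end of $p$, i.e., the side adjacent to $t_{\upgamma_n}$—against the path-algebra right-divisor condition in clause (ii) of the Anick chain definition. Once this bookkeeping is pinned down, both implications reduce to short structural comparisons, with the no-nesting lemma handling all the combinatorial work for uniqueness.
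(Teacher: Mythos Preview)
Your proof is correct and is in fact more than the paper itself supplies: the paper does not give a proof of this lemma at all, stating only that it ``easily follows from the definition'' and leaving it to the reader. Your argument unpacks the definitions carefully---the straddling observation pinning down where the relation must sit, and the no-nesting lemma handling condition (iii)---and these are exactly the ingredients one needs. One small point you leave implicit in the forward direction: membership in $R(t_{\upgamma_n})$ requires $p \in \mathcal{P}_\Lambda$, i.e.\ that the tail is nonzero in $\Lambda$; this is a standard fact about Anick chains (it is what makes the Anick resolution minimal, since $d(\upgamma \otimes 1) = \upgamma' \otimes t_\upgamma$ must be nonzero), but strictly speaking it deserves a sentence.
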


\begin{prop}[Weak unique extension property]\label{weakuniqueextensionproperty} Let $\upgamma_n$ be an $n$-chain. Then: 
	\begin{enumerate}[i)] 
	\item If $t_{\upgamma_n} = p_0$ is perfect, then $\upgamma_{n+1} = \upgamma_n p_1$ is the unique right extension of $\upgamma$ as an $(n+1)$-chain, then with tail $p_1$. 
	\item If $h_{\upgamma_n} = q_0$ is perfect, then $q_{-1} \upgamma_n$ is the unique left extension of $\upgamma$ as an $(n+1)$-chain, then with head $q_{-1}$.
\end{enumerate}
\end{prop}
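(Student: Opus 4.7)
The plan is to reduce the statement immediately to Lemma \ref{extendingchains} and the definition of a perfect path.

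First, by Lemma \ref{extendingchains}(i), right extensions of the $n$-chain $\upgamma_n$ to $(n+1)$-chains are in bijection with the elements of $R(t_{\upgamma_n})$, where a choice $p \in R(t_{\upgamma_n})$ corresponds to the chain $\upgamma_n p$ with tail $p$. Thus the statement of (i) is equivalent to asserting that $R(p_0) = \{p_1\}$.

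Second, since $t_{\upgamma_n} = p_0$ is perfect, by definition it sits inside a perfect cycle $(p_0, p_1, \dots, p_{r-1})$, and the perfect pair condition on $(p_0, p_1)$ gives exactly $R(p_0) = \{p_1\}$. Combining these two observations yields the claim. Statement (ii) is formally dual: one applies Lemma \ref{extendingchains}(ii), and notes that perfectness of $h_{\upgamma_n} = q_0$ means $q_0$ lies in a perfect cycle, whose perfect pair condition on $(q_{-1}, q_0)$ forces $L(q_0) = \{q_{-1}\}$.

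Since both parts follow immediately from results already in hand, there is no real obstacle to overcome; the proposition is essentially a repackaging of the definition of a perfect path in the language of Anick chains. The only thing worth being careful about is that statement (ii) requires using left Anick chains (and the corresponding left version of Lemma \ref{extendingchains}), which is licensed by Bardzell's coincidence $C_n' = C_n$ recalled in Section \ref{sec:TorExt}, and the convention of passing to $\Lambda^{\mathrm{op}}$ for dual statements that was highlighted at the start of the discussion of syzygies of Gorenstein monomial algebras.
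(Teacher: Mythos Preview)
Your proof is correct and follows exactly the same approach as the paper: both reduce immediately to Lemma~\ref{extendingchains} together with the observation that perfectness of $p_0$ (resp.\ $q_0$) gives $R(p_0) = \{p_1\}$ (resp.\ $L(q_0) = \{q_{-1}\}$). The paper's proof is simply the one-line version of what you wrote.
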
 
\begin{proof}
	This follows from Lemma \ref{extendingchains} as $R(p_0) = \{ p_{1} \}$ and $L(q_0) = \{ q_{-1} \}$.  
\end{proof}

\begin{cor}[Unique extension property]\label{uniqueextensionproperty} Let $\upgamma$ be an $n$-chain and $\alpha$ some non-trivial path in $Q$.
	\begin{enumerate}[i)]
	\item If $t_{\upgamma} = p_0$ is perfect, then $\upgamma \alpha$ is an Anick $(n+k)$-chain for some $k \geq 1$ if and only if $\alpha = p_{1} \dots p_{k}$, in which case $t_{\upgamma \alpha} = p_{k}$. 
	\item If $h_{\upgamma} = q_0$ is perfect, then $\alpha \upgamma$ is an Anick $(n+k)$-chain for some $k \geq 1$ if and only if $\alpha = q_{-k} \dots q_{-1}$, in which case $h_{\alpha \upgamma} = q_{-k}$. 
\end{enumerate}
\end{cor}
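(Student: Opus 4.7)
The plan is to prove (i) by induction on $k$; part (ii) follows formally by passage to $\Lambda^{\rm op}$, since left and right Anick chains coincide. For the ``if'' direction in (i), assume $\alpha = p_1\cdots p_k$; iterated application of the Weak Unique Extension Property (Proposition~\ref{weakuniqueextensionproperty}) gives that $\upgamma p_1$ is an $(n+1)$-chain with perfect tail $p_1$, that $\upgamma p_1 p_2$ is an $(n+2)$-chain with perfect tail $p_2$, and inductively that $\upgamma p_1 \cdots p_k$ is an $(n+k)$-chain with tail $p_k$. At each stage, the perfectness of the new tail $p_i$ (as a member of the perfect cycle extending $p_0$) is what allows Weak UEP to apply again.

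For the ``only if'' direction, I proceed by induction on $k$. The base case $k = 1$ is exactly Proposition~\ref{weakuniqueextensionproperty}(i). For the inductive step with $k \geq 2$, suppose $\upgamma\alpha$ is an $(n+k)$-chain and write $\upgamma\alpha = \delta \cdot t_{\upgamma\alpha}$ with $\delta$ the associated $(n+k-1)$-chain. The key claim is that $\upgamma$ is a left divisor of $\delta$. Since $\upgamma$ and $\delta$ are both left divisors of $\upgamma\alpha$ they are comparable as paths; and if $\delta$ were a proper left divisor of $\upgamma$, then the $n$-th term $\delta^{(n)}$ of the chain filtration of $\delta$ would be an $n$-chain that is a proper left divisor of $\upgamma$ -- it is strictly shorter than $\delta$ because chain tails are non-trivial, so the filtration has strictly increasing length -- contradicting condition (iii) in the definition of the $n$-chain $\upgamma$. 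Hence $\delta = \upgamma\beta$ for some non-trivial path $\beta$ (it must be non-trivial, since otherwise $\delta = \upgamma$ would carry two distinct chain indices $n$ and $n+k-1$). By the inductive hypothesis applied to the $(n+k-1)$-chain $\upgamma\beta$, we obtain $\beta = p_1 \cdots p_{k-1}$ with $t_\delta = p_{k-1}$, which remains perfect. A final application of Proposition~\ref{weakuniqueextensionproperty}(i) to $\delta$ then forces $t_{\upgamma\alpha} = p_k$, giving $\alpha = p_1 \cdots p_k$ with tail $p_k$ as claimed.

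The principal technical obstacle is to rule out the case in which $\delta$ is a proper left divisor of $\upgamma$; this is handled by passing to the chain filtration of $\delta$ and invoking condition (iii) of the Anick chain definition for $\upgamma$. Once this is in hand, the induction is driven entirely by the rigidity of perfect cycles captured in the Weak Unique Extension Property, and no further subtlety arises.
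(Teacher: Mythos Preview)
Your proof is correct and follows the same approach as the paper's. The paper's argument is more terse at the key step, writing only ``one of $\upgamma$ and $\eta$ divide the other, which forces $\eta = \upgamma\alpha'$ as $n < n+k-1$ by minimality of Anick chains''; your chain-filtration argument unpacks exactly what this minimality means, and the remainder (induction, Weak UEP at the final step) is identical.
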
 
\begin{proof} We only prove the first claim as the proof of the second one is similar. The if direction follows from iterating Prop. \ref{weakuniqueextensionproperty}, which shows $\alpha = p_{1} \dots p_{k}$ has the required property. We prove the converse by induction starting with $k = 1$. Clearly if $\upgamma \alpha$ is an $(n+1)$-chain then $\alpha = p_{1}$ by Prop. \ref{weakuniqueextensionproperty}, and then $t_{\upgamma \alpha} = p_{1}$. 

For $k > 1$ we can write $\upgamma \alpha = \eta t_{\upgamma \alpha}$ for an $(n+k-1)$-chain $\eta$. Then one of $\upgamma$ and $\eta$ divide the other, which forces $\eta = \upgamma \alpha'$ as $n < n+k-1$ by minimality of Anick chains. By induction $\alpha' = p_{1} \dots p_{k-1}$ and $t_{\upgamma\alpha'} = p_{k-1}$. Then $\upgamma \alpha = (\upgamma \alpha') t_{\upgamma \alpha}$ is an $(n+k)$-chain extending the $(n+k-1)$-chain $\upgamma \alpha' = \upgamma p_1 \dots p_{k-1}$ and so $t_{\upgamma \alpha} = p_{n+k}$ by the base case. Thus $\alpha = \alpha' t_{\upgamma \alpha} = p_{1} \dots p_{k}$ and $t_{\upgamma \alpha} = p_{k}$ as claimed. 
\end{proof}

From this, we obtain a description of Anick chains of high degree over a Gorenstein monomial algebra. 
\begin{cor}\label{highdegreechains} Let $\Lambda$ be a Gorenstein monomial algebra, let $d = \dim \Lambda$ and let $\upgamma_n$ be an $n$-chain for $n \geq d$. Then $\upgamma_n$ is extended from a $(d-1)$-chain by a perfect walk. More precisely: 
	\begin{enumerate}[i)] 
	\item There is a $(d-1)$-chain $\upgamma_{d-1}$ and a perfect walk $(p_d, p_{d+1}, \dots, p_n)$ such that $\upgamma_{d-1} p_d p_{d+1} \dots p_i$ is an $i$-chain with tail $p_i$ for $i = d, d+1, \dots, n$ and $\upgamma_n = \upgamma_{d-1} p_d p_{d+1} \dots p_n$. 
	\item There is a $(d-1)$-chain $\upgamma_{d-1}$ and a perfect walk $(q_{-n}, q_{-n+1}, \dots, q_{-d})$ such that $q_{-i} q_{-i+1} \dots q_{-d} \upgamma_{d-1}$ is an $i$-chain with head $q_{-i}$ for $i = d, d+1, \dots, n$ and $\upgamma_n = q_{-n} q_{-n+1} \dots q_d \upgamma_{d-1}$.  
\end{enumerate}
\end{cor}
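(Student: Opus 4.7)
The plan is to show that over a Gorenstein monomial algebra of dimension $d$, every Anick chain of degree at least $d$ has a perfect tail (and dually a perfect head), and then to iterate the basic right factorisation $\upgamma_n = \upgamma_{n-1} t_{\upgamma_n}$ to peel off perfect paths one at a time until we land at the level of a $(d-1)$-chain.

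The first step is a sliding-parameter use of Theorem \ref{gorensteincharacterisation}. Rather than applying the theorem with its parameter equal to $d-1$, I would apply it with the parameter equal to $n$ for each fixed $n \geq d$. Since $\dim \Lambda = d < n + 1$, the ``moreover'' clause applies, and it yields that for every $n$-chain $\upgamma$ the tail $t_\upgamma$ is itself a perfect path (and dually $h_\upgamma$ is perfect). This gives a uniform ``perfect tail'' statement at all degrees $\geq d$ without any further induction.

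Next, given an $n$-chain $\upgamma_n$ with $n \geq d$, I would iterate the standard decomposition $\upgamma_i = \upgamma_{i-1} t_{\upgamma_i}$ down from $i = n$ to $i = d$ to get
\begin{align*}
\upgamma_n = \upgamma_{d-1}\, p_d\, p_{d+1} \cdots p_n, \qquad p_i := t_{\upgamma_i},
\end{align*}
with $\upgamma_{d-1}$ a $(d-1)$-chain. By the first step, each $p_i$ with $i \geq d$ is perfect. To upgrade the sequence $(p_d, p_{d+1}, \ldots, p_n)$ from a list of perfect paths to an honest perfect walk, I would apply Lemma \ref{extendingchains}(i) to the extension $\upgamma_{i+1} = \upgamma_i p_{i+1}$, which gives $p_{i+1} \in R(p_i)$; since $p_i$ is perfect, $R(p_i)$ is already a singleton consisting of the next path in the unique perfect cycle through $p_i$, forcing $R(p_i) = \{p_{i+1}\}$. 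Hence $(p_d, \ldots, p_n)$ sits inside a single perfect cycle and is thus a perfect walk. Part (ii) is formally dual, using heads $h_{\upgamma_i}$, left factorisations, and Lemma \ref{extendingchains}(ii).

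The only conceptually nontrivial move is the sliding-parameter application of Theorem \ref{gorensteincharacterisation}; once that observation is made the rest is essentially bookkeeping. The one point requiring genuine care is verifying that the consecutive tails $p_i$ really assemble into a perfect walk rather than merely forming a list of perfect paths, but this comes out directly from the definition of a perfect pair combined with Lemma \ref{extendingchains}, so I do not foresee any serious technical obstacle.
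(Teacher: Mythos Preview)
Your proof is correct and follows essentially the same line as the paper's. The only difference is in how the perfect walk structure is established: the paper applies Theorem~\ref{gorensteincharacterisation} once (at level $d$, to get $p_d$ perfect) and then invokes the unique extension property (Corollary~\ref{uniqueextensionproperty}) to conclude that the subsequent tails are forced to be $p_{d+1}, \ldots, p_n$; you instead apply Theorem~\ref{gorensteincharacterisation} at every level $i \geq d$ to get each $p_i$ perfect directly, and then verify the consecutive relation $R(p_i) = \{p_{i+1}\}$ via Lemma~\ref{extendingchains}. Your route avoids citing Corollary~\ref{uniqueextensionproperty} and is in effect re-deriving the relevant special case of it inline, which is slightly more self-contained; the paper's route is marginally more economical since it only invokes the theorem once.
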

\begin{proof}We prove i) as ii) is dual. Write $\upgamma_n = \upgamma_{n-1} t_{\upgamma_n} = \dots = \upgamma_{d-1} t_{\upgamma_d} t_{\upgamma_{d+1}} \dots t_{\upgamma_n}$ for a $(d-1)$-chain $\upgamma_{d-1}$ and a suitable sequence of tails. Now the $d$-chain $\upgamma_d$ must have a perfect tail by Theorem \ref{gorensteincharacterisation} since $d+1 > d = \dim \Lambda$ and so we may set $t_{\upgamma_d} = p_d$. The unique extension property then gives $t_{\upgamma_{d+1}} = p_{d+1}$, \dots, $t_{\upgamma_n} = p_n$. 
\end{proof}

In other words, this result shows that the structure of $n$-chains over Gorenstein monomial algebras eventually stabilises for $n \geq \dim \Lambda$ and becomes predictable, with the subchain $\upgamma_{d-1}$ of $\upgamma_n$ consisting of noise which we will be able to ignore.

\section{The {\bf Fg} conditions and higher structures}\label{sec:Ainf}

\subsection{Hochschild cohomology and the {\bf Fg} conditions} Let $\Lambda^{\sf e} := \Lambda^{\rm op} \otimes_k \Lambda$ be the enveloping algebra of $\Lambda$, and consider $\Lambda \in \modsf \Lambda^{\sf e}$ with its natural module structure. Recall that the Hochschild cohomology ring ${\rm HH}^*(\Lambda, \Lambda) := {\rm Ext}^*_{\Lambda^{\sf e}}(\Lambda, \Lambda)$ is a graded-commutative algebra in that $x \smile y = (-1)^{|x||y|} y \smile x$ for all elements $x, y \in {\rm HH}^*(\Lambda, \Lambda)$, and we let ${\rm HH}^{\sf ev}(\Lambda, \Lambda)$ denote the subalgebra of elements of even degree. 

For any $M \in \modsf \Lambda$, we have an algebra homomorphism 
\begin{align}
	\varphi_M := M\otimes_\Lambda - : {\rm HH}^*(\Lambda, \Lambda) \to {\rm Ext}^*_\Lambda(M, M).  
\end{align}
The map $\varphi_M$ induces on ${\rm Ext}^*_\Lambda(M, N)$ for any $N \in \modsf \Lambda$ the structure of a right graded module over Hochschild cohomology; moreover this is compatible with the left action coming from $\varphi_N$ as these satisfy $\varphi_N(x) \cdot \theta = (-1)^{|x| |\theta|} \theta \cdot \varphi_M(x)$ for all elements $\theta \in {\rm Ext}^*_\Lambda(M, N)$ and $x \in {\rm HH}^*(\Lambda, \Lambda)$. This further restricts to a right module structure over any subalgebra ${\rm H} \subseteq {\rm HH}^*(\Lambda, \Lambda)$. The finite generation ({\bf Fg}) conditions of Snashall--Solberg are then stated as follows: 
\begin{enumerate}[${\bf Fg}\ \! 1$.] 
	\item ${\rm HH}^{\sf ev}(\Lambda, \Lambda)$ contains a Noetherian graded subalgebra ${\rm H}$ with ${\rm H}^0 = {\rm HH}^0(\Lambda, \Lambda)$. 
	\item ${\rm Ext}^*_\Lambda(M, N)$ is a finitely generated ${\rm H}$-module for all $M, N \in \modsf \Lambda$. 
	\end{enumerate} 

	We say that $\Lambda$ satisfies ${\bf Fg}$ if both conditions above hold. The ${\bf Fg}$ conditions imply that ${\rm HH}^*(\Lambda, \Lambda)$ is module-finite over ${\rm H}$, and so that ${\rm HH}^*(\Lambda, \Lambda)$ is itself a Noetherian algebra. Moreover, in the presence of ${\bf Fg}\ \! 1$, by Jordan-H\"older filtration arguments it is enough to establish ${\bf Fg}\ \! 2$ in the case $M = N = \Bbbk$. 

\subsection{$A_\infty$-centres of minimal $A_\infty$-algebras}\label{Ainfcentre} 
In \cite{BG} the image of the characteristic homomorphism 
\begin{align}
\varphi_M: {\rm HH}^*(\Lambda, \Lambda) \to {\rm Ext}^*_\Lambda(M, M)
\end{align} 
was related for any $M \in \modsf \Lambda$ to the $A_\infty$-structure on ${\rm Ext}^*_\Lambda(M, M)$, where it was shown to consist of $A_\infty$-central classes. Recall that a class $a \in E = (E, m_2)$ in a graded algebra is graded-central if $m_2(a,x) = (-1)^{|a||x|}m_2(x,a)$ for all $x \in E$. Equivalently, the inner derivation ${\rm ad}_a$ vanishes:
\begin{align}
	{\rm ad}_a(x) = [a,x] = m_2(a,x) - (-1)^{|a||x|}m_2(x,a) = 0.
\end{align}
We denote by $\mathcal{Z}_{gr} E = \{ a \in E \ | \ {\rm ad}_a = 0 \}$ the graded centre. 

Generalising to $E = (E, \{ m_n \}_{n \geq 2})$ a minimal $A_\infty$-algebra, given $a \in E$, for each $n \geq 1$ we form the $n$-ary higher commutator 
\begin{align}
	[a;x_1, \dots, x_n]_{1,n} := \sum_{i=0}^{n} (-1)^{i} (-1)^{|a|(|x_1| + \dots + |x_{i}|)}\ \! m_{n+1}(x_1, \dots, x_{i}, a, x_{i+1}, \dots, x_n).
\end{align}
In other words we apply $m_{n+1}$ to the (signed) shuffle product $a \sh (x_1 \otimes \dots \otimes x_n)$. We then define the homotopy inner derivation ${\rm ad}_a = \{ {\rm ad}_{a, n} \}_{n \geq 1}$ to be a collection of $n$-ary operations ${\rm ad}_{a,n}: E^{\otimes n} \to E $ given by ${\rm ad}_{a,n}(x_1, \dots, x_n) := [a; x_1, \dots, x_n]_{1,n}$. The collection ${\rm ad}_a$ forms a cocycle in the complex of \emph{homotopy derivations} 
\begin{align}
	{\rm ad}_a \in {\rm hoder}(E) := \left({\rm Hom}_\Bbbk(\oplus_{n \geq 1} \overline{E}^{\otimes n}, E), \partial \right) 
\end{align} 
which is a subcomplex of the Hochschild cochain complex $C^*(E, E)$ of the $A_\infty$-algebra $E$, see \cite{BG} for details.  

\begin{defn} The $A_\infty$-centre of $E$ is the space \[\mathcal{Z}_\infty E := \{ a \in E \ | \ [{\rm ad}_a] = 0 \text{ in } {\rm H}^*({\rm hoder}(E))\}.\] 
\end{defn}
The $A_\infty$-centre $\mathcal{Z}_\infty E \subseteq E$ is a graded subalgebra of the underlying graded algebra $E = (E, m_2)$, and $A_\infty$-central classes are always graded-central, so that we have containment $\mathcal{Z}_\infty E \subseteq \mathcal{Z}_{gr} E$ with equality whenever $m_n = 0$ for all $n \geq 3$. 

The image of the characteristic homomorphism $\varphi_\Bbbk: {\rm HH}^*(\Lambda, \Lambda) \to {\rm Ext}^*_\Lambda(\Bbbk, \Bbbk)$ is always contained in the graded centre 
\begin{align}
	{\rm im}(\varphi_\Bbbk) \subseteq \mathcal{Z}_{gr}{\rm Ext}^*_\Lambda(\Bbbk, \Bbbk)	 
\end{align}
with equality in the case of $\Lambda$ Koszul by a theorem of Buchweitz--Green--Snashall--Solberg, but with proper inclusion in general. Since the Koszul case corresponds to the situation where one can take $m_n = 0$ for all $n \geq 3$ for the $A_\infty$-structure on ${\rm Ext}^*_\Lambda(\Bbbk, \Bbbk)$, this result was refined in \cite{BG} as follows: 
\begin{thm}[\cite{BG}] The image of the characteristic homomorphism is precisely the $A_\infty$-centre: 
	\begin{align}
		{\rm im}(\varphi_\Bbbk) = \mathcal{Z}_\infty {\rm Ext}^*_\Lambda(\Bbbk, \Bbbk). 
	\end{align}
\end{thm}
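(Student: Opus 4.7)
The plan is to rephrase the characteristic homomorphism entirely on the Koszul dual side --- on the $A_\infty$-algebra $E = \Ext^*_\Lambda(\Bbbk,\Bbbk)$ --- and then to read off the image from a canonical short exact sequence of cochain complexes. The first step is to invoke derived Morita invariance of Hochschild cohomology: since $\Lambda$ is finite dimensional and augmented over $\Bbbk$, the dg algebra ${\rm RHom}_\Lambda(\Bbbk,\Bbbk)$ is $A_\infty$-quasi-isomorphic to the minimal model $E$, and a Keller-type derived Morita theorem induces an isomorphism ${\rm HH}^*(\Lambda,\Lambda) \cong {\rm HH}^*(E,E)$, where the right-hand side is computed by the Hochschild cochain complex $C^*(E,E)$ of the $A_\infty$-algebra $E$. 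Under this identification one then needs to verify that $\varphi_\Bbbk$ corresponds to the arity-zero projection $C^*(E,E) \twoheadrightarrow C^0(E,E) = E$: this is natural because on the derived level $\varphi_\Bbbk$ is realised by applying $\Bbbk \otimes^{\mathbf{L}}_\Lambda -$, and Koszul duality converts that operation into evaluation of a Hochschild cochain at the unit $1 \in E$.

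The second step is formal. Because $E$ is minimal ($m_1 = 0$) and the Hochschild differential on $C^*(E,E)$ is the Gerstenhaber bracket with $m = m_2 + m_3 + \cdots$, each component $[m_k,-]$ strictly raises cochain arity by $k - 1 \geq 1$. Hence ${\rm hoder}(E) = C^{\geq 1}(E,E)$ is a subcomplex, the quotient $C^*(E,E)/{\rm hoder}(E) = E$ inherits the zero differential, and one obtains a short exact sequence
\[
0 \to {\rm hoder}(E) \to C^*(E,E) \to E \to 0
\]
of cochain complexes. In the resulting long exact sequence the connecting morphism $\delta \colon E \to {\rm H}^{*+1}({\rm hoder}(E))$ sends a class $a \in E$ (automatically a cocycle, since $E$ carries the zero differential) to the class of its Hochschild differential $[m,a]$, and a direct expansion of the Gerstenhaber bracket in arity $k-1$ identifies this with ${\rm ad}_a$. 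Consequently the image of the map ${\rm HH}^*(E,E) \to E$ appearing in the long exact sequence coincides with $\ker(\delta) = \mathcal{Z}_\infty E$; combining with the first step yields ${\rm im}(\varphi_\Bbbk) = \mathcal{Z}_\infty \Ext^*_\Lambda(\Bbbk,\Bbbk)$.

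The main difficulty is concentrated in step one, and has two parts: (a) justifying the $A_\infty$-Morita isomorphism ${\rm HH}^*(\Lambda,\Lambda) \cong {\rm HH}^*(E,E)$, which requires care because $\Bbbk$ is not in general a compact generator of $\mathcal{D}(\Lambda)$ and so one typically has to argue via the augmented dg algebra ${\rm End}_\Lambda(\Lambda \oplus \Bbbk)$, via a suitable cofibrant replacement of $\Bbbk$, or through a direct twisted bar-construction model; and (b) tracing through the chosen model to check that $\varphi_\Bbbk$ really is intertwined with the arity-zero projection, which amounts to a compatibility between Koszul duality and the evaluation-on-$\Bbbk$ functor. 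Once these intertwinings are in place, the conclusion is an essentially immediate snake-lemma reading of the short exact sequence above.
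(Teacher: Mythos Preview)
The paper does not give its own proof of this theorem; it is stated with a citation to \cite{BG} and used as a black box. Your sketch is correct and is precisely the argument of \cite{BG} that the paper relies on: the Koszul-dual identification ${\rm HH}^*(\Lambda,\Lambda)\cong{\rm HH}^*(E,E)$, under which $\varphi_\Bbbk$ becomes the arity-zero projection, followed by the long exact sequence coming from $0\to{\rm hoder}(E)\to C^*(E,E)\to E\to 0$ with connecting map $a\mapsto[{\rm ad}_a]$. The paper itself invokes exactly this picture in the remark following Proposition~4.9 (``the invariance of Hochschild cohomology under Koszul duality gives an isomorphism ${\rm HH}^*(\Lambda,\Lambda)\cong{\rm HH}^*(E,E)$\ldots the condition ${\rm ad}_{\chi_\upgamma}=0$ is precisely the Hochschild cocycle condition''), so your outline matches both the cited source and the paper's own usage.
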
 
Reading off the $A_\infty$-central condition $[{\rm ad}_a] = 0$ in the cohomology ${\rm H}^*({\rm hoder}(E, E))$ of $E = {\rm Ext}^*_\Lambda(\Bbbk, \Bbbk)$ can be subtle, and so in practice one may focus on the simpler sufficient condition that ${\rm ad}_a$ vanishes on the nose. Equivalently, for $a \in {\rm Ext}^*_\Lambda(\Bbbk, \Bbbk)$, we are interested in the vanishing of higher commutators 
\begin{align}\label{vanishingcommutators} 
	\sum_{i=0}^{n}(-1)^i (-1)^{|a|(|x_1| + \dots + |x_i|)}\ \! m_{n+1}(x_1, \dots, x_i, a, x_{i+1}, \dots, x_n) = 0 
\end{align}
for all $x_1, \dots, x_n \in {\rm Ext}^*_\Lambda(\Bbbk, \Bbbk)$ and all $n \geq 1$. We record this as a corollary:
\begin{cor}\label{sufficientcriterion} Let $a \in {\rm Ext}^*_\Lambda(\Bbbk, \Bbbk)$ be such that all higher commutators (\ref{vanishingcommutators}) vanish. Then $a$ is in the image of $\varphi_\Bbbk: {\rm HH}^*(\Lambda, \Lambda) \to {\rm Ext}^*_\Lambda(\Bbbk, \Bbbk)$. 
\end{cor}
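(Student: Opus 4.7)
The plan is to give a direct deduction from the Briggs--G\'elinas theorem identifying $\operatorname{im}(\varphi_\Bbbk)$ with $\mathcal{Z}_\infty \Ext^*_\Lambda(\Bbbk, \Bbbk)$, combined with a short unwinding of the definition of the $A_\infty$-centre from Subsection~\ref{Ainfcentre}. This corollary is genuinely a one-line consequence of the theorem just cited; the only substantive content of the write-up is making precise which definition and which hypothesis get matched up.

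First I would recall the definition $\mathcal{Z}_\infty E = \{ a \in E : [{\rm ad}_a] = 0 \text{ in } {\rm H}^*({\rm hoder}(E))\}$ and note that the homotopy derivation ${\rm ad}_a$ is by construction the collection of $n$-ary operations ${\rm ad}_{a,n}(x_1, \ldots, x_n) = [a; x_1, \ldots, x_n]_{1,n}$ for $n \geq 1$. The components ${\rm ad}_{a,n}$ are thus literally the higher commutators appearing in \eqref{vanishingcommutators}, so the hypothesis says precisely that ${\rm ad}_a = 0$ as a cochain in ${\rm hoder}(E)$, not merely as a cohomology class.

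Since the zero cochain is in particular a cocycle representing the zero class, we obtain $[{\rm ad}_a] = 0$ and hence $a \in \mathcal{Z}_\infty \Ext^*_\Lambda(\Bbbk, \Bbbk)$. Applying the Briggs--G\'elinas theorem then immediately gives $a \in \operatorname{im}(\varphi_\Bbbk)$, finishing the proof. There is no real obstacle here; the only point worth flagging is that this yields a strictly weaker sufficient criterion than $A_\infty$-centrality (which only requires ${\rm ad}_a$ to vanish up to coboundary in ${\rm hoder}(E)$), but it is the practically useful one since verifying vanishing on the nose of the commutators \eqref{vanishingcommutators} is far more tractable in examples than computing the cohomology of ${\rm hoder}(E)$ \textemdash{} this is exactly the form in which the criterion will be applied later in the paper to the explicit elements of $\Ext^*_\Lambda(\Bbbk,\Bbbk)$ arising from stable relation cycles.
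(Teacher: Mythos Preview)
Your proposal is correct and follows exactly the same reasoning as the paper: the paper does not give a separate proof but simply records the corollary after explaining that vanishing of all higher commutators \eqref{vanishingcommutators} is the condition ${\rm ad}_a = 0$ on the nose, which trivially implies $[{\rm ad}_a] = 0$ and hence $a \in \mathcal{Z}_\infty \Ext^*_\Lambda(\Bbbk,\Bbbk) = \operatorname{im}(\varphi_\Bbbk)$ by the Briggs--G\'elinas theorem. Your write-up makes this explicit and is entirely in line with the paper's intent.
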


Finally, we can recast the ${\bf Fg}$ conditions in terms of $\mathcal{Z}_\infty {\rm Ext}^*_\Lambda(\Bbbk, \Bbbk)$. For that, we introduce another set of conditions ${\bf Fg'}$: 
\begin{enumerate}[${\bf Fg'}\ \!1$.] 
	\item $\mathcal{Z}_\infty := \mathcal{Z}_\infty {\rm Ext}^*_\Lambda(\Bbbk, \Bbbk)$ is a Noetherian algebra.
	\item ${\rm Ext}^*_\Lambda(\Bbbk, \Bbbk)$ is module-finite over $\mathcal{Z}_\infty$. 
\end{enumerate}

\begin{prop}\label{Fgequivalence} The conditions ${\bf Fg}$ and ${\bf Fg'}$ are equivalent.
\end{prop}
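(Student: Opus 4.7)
The plan is to use the identification $\mathcal{Z}_\infty = {\rm im}(\varphi_\Bbbk)$ recalled just above as the bridge between the two sets of conditions. This reduces the equivalence to a tight comparison between Noetherian subalgebras of ${\rm HH}^*(\Lambda,\Lambda)$ mapping into $\mathcal{Z}_\infty$ via $\varphi_\Bbbk$ on the one hand, and Noetherian subalgebras of $\mathcal{Z}_\infty$ itself on the other. The extension of ${\bf Fg}\ \! 2$ from $M=N=\Bbbk$ to arbitrary finite modules is available for free via the Jordan--H\"older reduction mentioned earlier in this section, so it suffices throughout to work with the case $M=N=\Bbbk$.

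For the forward direction $({\bf Fg}) \Rightarrow ({\bf Fg}')$, I would start from the Noetherian subalgebra ${\rm H} \subseteq {\rm HH}^{\sf ev}(\Lambda, \Lambda)$ provided by ${\bf Fg}\ \! 1$ and consider its image $\varphi_\Bbbk({\rm H}) \subseteq \mathcal{Z}_\infty$, which is Noetherian as a quotient of ${\rm H}$. Since the ${\rm H}$-action on ${\rm Ext}^*_\Lambda(\Bbbk,\Bbbk)$ factors through $\varphi_\Bbbk$, that module is finitely generated over $\varphi_\Bbbk({\rm H})$. Then $\mathcal{Z}_\infty$, sitting as a graded $\varphi_\Bbbk({\rm H})$-submodule of this Noetherian module, is itself finitely generated over $\varphi_\Bbbk({\rm H})$, and hence Noetherian by the Eakin--Nagata theorem. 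This gives ${\bf Fg}'\ \! 1$, and module-finiteness of ${\rm Ext}^*_\Lambda(\Bbbk,\Bbbk)$ over the intermediate algebra $\mathcal{Z}_\infty$ follows immediately, yielding ${\bf Fg}'\ \! 2$.

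For the converse $({\bf Fg}') \Rightarrow ({\bf Fg})$, the strategy is to extract from $\mathcal{Z}_\infty$ a commutative Noetherian subalgebra concentrated in even degrees, lift it to ${\rm HH}^{\sf ev}(\Lambda, \Lambda)$ through $\varphi_\Bbbk$, and invoke Hilbert's basis theorem. Concretely, I would proceed in three steps: (i) use graded-commutativity together with Noetherianity of $\mathcal{Z}_\infty$ to produce a commutative Noetherian subalgebra $Z^{\sf ev} \subseteq \mathcal{Z}_\infty^{\sf ev}$ over which $\mathcal{Z}_\infty$ is module-finite, by adjoining squares of odd-degree generators to the even-degree ones (squares of odd elements land in the even subalgebra by graded-commutativity, and the odd part, being a homogeneous submodule of the Noetherian ring $\mathcal{Z}_\infty$, is automatically finitely generated over this even subalgebra); (ii) lift finitely many homogeneous generators of $Z^{\sf ev}$ along $\varphi_\Bbbk$ to even-degree classes in ${\rm HH}^*(\Lambda,\Lambda)$ and adjoin ${\rm HH}^0(\Lambda,\Lambda)$ to form a subalgebra ${\rm H} \subseteq {\rm HH}^{\sf ev}(\Lambda, \Lambda)$; (iii) observe that ${\rm H}$ is a commutative graded algebra, finitely generated over ${\rm HH}^0(\Lambda,\Lambda)$, hence Noetherian by Hilbert's basis theorem, and that $\varphi_\Bbbk({\rm H})$ contains $Z^{\sf ev}$, so that ${\rm Ext}^*_\Lambda(\Bbbk,\Bbbk)$ is module-finite over ${\rm H}$ via $\varphi_\Bbbk$.

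The hardest step is (i), the extraction of an even-degree commutative Noetherian subalgebra of $\mathcal{Z}_\infty$ over which it is module-finite. This is a folklore construction that requires a little care in small characteristic, but presents no essential difficulty once one uses that odd-degree squares land in the even subalgebra and that the odd part of $\mathcal{Z}_\infty$ is automatically a finitely generated module over the Noetherian even subalgebra. The rest of the argument is a formal application of Eakin--Nagata and Hilbert's basis theorem, combined with the Jordan--H\"older reduction to handle general $M$ and $N$.
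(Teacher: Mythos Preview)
Your proposal is correct and follows essentially the same route as the paper: in both directions the bridge is $\mathcal{Z}_\infty = {\rm im}(\varphi_\Bbbk)$, and the argument reduces to pushing a Noetherian subalgebra forward along $\varphi_\Bbbk$ (for ${\bf Fg}\Rightarrow{\bf Fg'}$) and lifting finitely many even-degree generators back (for ${\bf Fg'}\Rightarrow{\bf Fg}$), with the Jordan--H\"older reduction handling general $M,N$. The paper is terser---it asserts directly that $\mathcal{Z}_\infty^{\sf ev}$ is finitely generated and lifts its generators, whereas you build an explicit $Z^{\sf ev}$ via squares of odd generators and invoke Eakin--Nagata and Hilbert explicitly---but these are minor variations on the same argument.
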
 
\begin{proof}
	If $\Lambda$ satisfies ${\bf Fg}$ with regards to ${\rm H} \subseteq {\rm HH}^*(\Lambda, \Lambda)$ then $\varphi_\Bbbk({\rm H}) \subseteq \mathcal{Z}_\infty$ is a Noetherian algebra over which ${\rm Ext}^*_\Lambda(\Bbbk, \Bbbk)$ is module-finite and so ${\bf Fg'}$ holds. 

	Conversely if $\Lambda$ satisfies ${\bf Fg'}$, then $\mathcal{Z}_\infty$ is Noetherian and must be finitely generated as it is positively graded. Moreover, so must be its even subalgebra $\mathcal{Z}_\infty^{\sf ev}$. Picking lifts of said generators, we can form a Noetherian subalgebra ${\rm H} \subseteq {\rm HH}^{\sf ev}(\Lambda, \Lambda)$ satisfying ${\bf Fg}\ \! 1$, possibly after adding ${\rm H}^0 = {\rm HH}^0(\Lambda, \Lambda)$. Since $\varphi_\Bbbk({\rm H}) = {\rm Z}_\infty^{\sf ev}$, one sees from ${\bf Fg'}\ \! 2$ that ${\rm Ext}^*_\Lambda(\Bbbk, \Bbbk)$ is module-finite over ${\rm H}$ and the same then holds for ${\rm Ext}^*_\Lambda(M, N)$ for all $M, N \in \modsf \Lambda$ by Jordan--H\"older filtration arguments. Hence ${\bf Fg}$ holds. 
\end{proof}

Let us remark that this approach to studying the ${\bf Fg}$ condition is not new, see \cite[Th.~1.3]{ES11} for the Koszul case where $\mathcal{Z}_{gr}{\rm Ext}^*_\Lambda(\Bbbk, \Bbbk) = \mathcal{Z}_\infty {\rm Ext}^*_\Lambda(\Bbbk, \Bbbk)$.

\subsection{Combinatorics of the $A_\infty$-centre} In the case of monomial algebras, it is possible to use Proposition \ref{vanishingpatterns} to simplify the general formula for higher commutators.
\begin{lem}\label{simplifiedcommutator} Suppose $a \in {\rm Ext}^*_\Lambda(\Bbbk, \Bbbk)$ is of even degree. Then for any $x_1, \dots, x_n$ in  ${\rm Ext}^*_\Lambda(\Bbbk, \Bbbk)$, the higher commutators simplify to 
\begin{align}
[a;x_1, \dots, x_n]_{1,n} = m_{n+1}(a, x_1, \dots, x_n) + (-1)^{n} m_{n+1}(x_1, \dots, x_n, a). 
\end{align}
\end{lem}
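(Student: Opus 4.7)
The plan is to start from the full definition of the higher commutator,
\begin{align*}
[a;x_1,\dots,x_n]_{1,n} = \sum_{i=0}^{n}(-1)^i (-1)^{|a|(|x_1|+\cdots+|x_i|)} m_{n+1}(x_1,\dots,x_i,a,x_{i+1},\dots,x_n),
\end{align*}
and first notice that the hypothesis $|a|$ even collapses every sign $(-1)^{|a|(|x_1|+\cdots+|x_i|)}$ to $+1$. Thus it suffices to show that the $n-1$ middle terms (those with $0 < i < n$) vanish.

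For that, I would invoke bilinearity to reduce to the case where each $x_j$ is a basis vector of $\Ext^*_\Lambda(\Bbbk,\Bbbk)$, i.e.\ either the unit or the dual of an Anick chain. The unit case is handled immediately by strict unitality of the canonical $A_\infty$-structure, which kills $m_{n+1}$ for $n+1\ge 3$. So I may assume each $x_j$ has positive weight and homogeneous degree.

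The key step is then a case analysis driven by the vanishing patterns of Proposition~\ref{vanishingpatterns}, applied to the $(n+1)$-tuple $(x_1,\dots,x_i,a,x_{i+1},\dots,x_n)$, in which $a$ sits in a non-endpoint position (positions $2,\dots,n$). Let $k$ denote the number of even-degree elements among $x_1,\dots,x_n$; together with $a$, the total number of even-degree inputs to $m_{n+1}$ is $k+1$. If $k\ge 2$ then at least three inputs are of even degree and Proposition~\ref{vanishingpatterns}(i) gives the vanishing. If $k=1$ then exactly two inputs are of even degree, but $a$ is not at position $1$ or $n+1$, so Proposition~\ref{vanishingpatterns}(ii) applies. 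If $k=0$ then $a$ is the unique even-degree input and sits strictly in the interior, so Proposition~\ref{vanishingpatterns}(iii) applies. In all three sub-cases the middle term is zero.

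Only $i=0$ and $i=n$ can therefore contribute, and these give precisely $m_{n+1}(a,x_1,\dots,x_n)$ and $(-1)^n m_{n+1}(x_1,\dots,x_n,a)$ respectively, which is the asserted formula. There is no serious obstacle here: the whole content of the lemma is a bookkeeping consequence of the parity vanishing recorded in Proposition~\ref{vanishingpatterns}, with the evenness of $|a|$ simultaneously killing the Koszul signs and forcing $a$ into one of the two privileged endpoint slots.
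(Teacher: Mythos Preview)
Your proof is correct and follows the same approach as the paper: the paper's proof consists of the single sentence ``Indeed, if the degree of $a$ is even, we have $m_{n+1}(x_1, \dots, x_i, a, x_{i+1}, \dots, x_n) = 0$ for $0 < i < n$,'' implicitly invoking Proposition~\ref{vanishingpatterns} exactly as you do. Your explicit case analysis on the number $k$ of even-degree $x_j$'s and your remark on the collapse of the Koszul signs simply spell out what the paper leaves to the reader; the reduction to basis vectors is not strictly needed since Proposition~\ref{vanishingpatterns} is already stated for arbitrary elements, but it does no harm.
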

\begin{proof}
Indeed, if the degree of $a$ is even, we have $m_{n+1}(x_1, \dots, x_i, a, x_{i+1}, \dots, x_n) = 0$ for $0 < i < n$. 
\end{proof}

Using this lemma, we obtain a combinatorial criterion for $A_\infty$-centrality. Given elements $a_1, \dots, a_m$ with $a_i \in {\rm Ext}^{r_i+1}_\Lambda(\Bbbk, \Bbbk) = kC_{r_i}^{\vee}$ with $r_i \geq 0$, we interpret the expression $a_1 \dots a_m \in kC_{r_1 + \dots + r_m + 1}^{\vee}$ as denoting the sum of terms of $a_1 \dots a_m \in kQ^{op}$ which are in $kC_{r_1 + \dots + r_m + 1}^\vee$, that is as the projection on the natural summand. Next, we say that an element $a \in {\rm Ext}^*_\Lambda(\Bbbk, \Bbbk)$ is symmetric if $a \cdot e = e \cdot a$ for all $e \in \Bbbk$, i.e. $a$ is a linear combination of closed oriented cycles in the quiver for Ext. The combinatorial criterion then states: 
	\begin{prop}[Combinatorial criterion]\label{combinatorialcriterion} Suppose $a \in {\rm Ext}^*_\Lambda(\Bbbk, \Bbbk)$ is a symmetric element of even degree $|a| = r_0+1 \geq 2$. Assume that for each tuple of chains $c_1, \dots, c_n$ with $c_i \in C_{r_i}^{\vee}$, we have equalities 
		\begin{align} 
		ac_1 \dots c_n = c_1 \dots c_n a
		\end{align}
		in $kC_{r_0 + r_1 + \dots + r_n + 1}^{\vee}$. Then $a$ is $\infty$-central and so lies in the image of Hochschild cohomology. 
\end{prop}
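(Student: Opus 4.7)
The plan is as follows. By Corollary \ref{sufficientcriterion}, it suffices to show that for each $n \geq 1$ and every tuple $x_1, \ldots, x_n \in \mathrm{Ext}^*_\Lambda(\Bbbk, \Bbbk)$, the higher commutator $[a; x_1, \ldots, x_n]_{1,n}$ vanishes. By multilinearity I may assume each $x_i = c_i$ is the dual of an Anick chain with $c_i \in C_{r_i}^{\vee}$. Since $|a| = r_0 + 1$ is even, Lemma \ref{simplifiedcommutator} reduces the expression to
\[
[a; c_1, \ldots, c_n]_{1,n} = m_{n+1}(a, c_1, \ldots, c_n) + (-1)^n\, m_{n+1}(c_1, \ldots, c_n, a),
\]
so the task becomes to show that this sum is zero, using the explicit sign formula of Corollary \ref{newhigherproducts} together with the vanishing patterns of Proposition \ref{vanishingpatterns}.

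The argument then proceeds by case analysis on the parities of the $r_i$'s. In the generic case where every $c_i$ has odd degree (every $r_i$ even for $i \geq 1$), both summands may be non-zero; substituting $r_0$ odd, all other $r_i$ even into the sign formula yields sign $(-1)^n$ for $m_{n+1}(a, c_1, \ldots, c_n)$ and sign $-1$ for $m_{n+1}(c_1, \ldots, c_n, a)$, producing
\[
[a; c_1, \ldots, c_n]_{1,n} = (-1)^n\bigl(ac_1 \cdots c_n - c_1 \cdots c_n a\bigr),
\]
which vanishes by the hypothesis. Every remaining case — where at least one $c_i$ is of even degree — is handled by invoking Proposition \ref{vanishingpatterns} to force one or both summands to vanish: if $\geq 3$ entries (counting $a$) are of even degree, both $m_{n+1}$'s are killed by item (i); if an even $c_i$ lies strictly interior to $(c_1, \ldots, c_n)$, the two even inputs $a$ and $c_i$ fail to land at both outermost positions in either product, killing both by item (ii); if only $c_1$ or only $c_n$ is even, then exactly one of the two products survives, namely the one where the two even-degree entries occupy the first and last positions.

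The crucial remaining observation is that by formula (\ref{higherproducts}), the vanishing of an $m_{n+1}$ on a tuple of chains is equivalent to the corresponding concatenation not being an Anick chain of the expected total degree, i.e., its projection onto $kC^\vee_{r_0+r_1+\cdots+r_n+1}$ being zero. Hence whenever one of $m_{n+1}(a, c_1, \ldots, c_n)$, $m_{n+1}(c_1, \ldots, c_n, a)$ is forced to vanish by the vanishing patterns, the associated concatenation projects to zero; the hypothesis $ac_1 \cdots c_n = c_1 \cdots c_n a$ in $kC^\vee_{r_0+r_1+\cdots+r_n+1}$ then forces the other projection to vanish as well, so the remaining summand of the commutator is zero too.

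The main obstacle is the sign bookkeeping: one must verify that in the generic case the signs coming from Corollary \ref{newhigherproducts} combine to produce precisely the difference $ac_1 \cdots c_n - c_1 \cdots c_n a$ (rather than a sum, or something whose cancellation depends on $r_1$), which is exactly what forces the use of the hypothesis that $|a|$ is even. A secondary point is that the symmetry assumption on $a$ ensures that the concatenations $ac_1 \cdots c_n$ and $c_1 \cdots c_n a$ are compatible in sources and targets for every admissible tuple $(c_1, \ldots, c_n)$, so that the hypothesis is meaningfully stated; once the generic case and the various vanishing-pattern cases are tabulated, the vanishing of every higher commutator follows.
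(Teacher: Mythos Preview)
Your proposal is correct and follows essentially the same approach as the paper's proof: both reduce via Corollary~\ref{sufficientcriterion} and Lemma~\ref{simplifiedcommutator} to the two-term expression, then combine Proposition~\ref{vanishingpatterns} with the sign formula of Corollary~\ref{newhigherproducts} and the hypothesis. The only difference is organisational: the paper first observes that the hypothesis forces $m_{n+1}(a,c_1,\dots,c_n)$ and $m_{n+1}(c_1,\dots,c_n,a)$ to vanish or not vanish simultaneously (since each is a sign times the common projection $ac_1\cdots c_n = c_1\cdots c_n a$), and only then invokes the vanishing patterns to deduce that in the nonzero case all $c_i$ must have odd degree, at which point a single sign computation finishes; you instead split by parity first, do the sign computation in the all-odd case, and in the remaining cases use the vanishing patterns to kill one term and the hypothesis to kill the other. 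The paper's ordering is marginally cleaner since it avoids enumerating the parity subcases, but the content is identical.
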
 
\begin{proof} 
By Corollary \ref{sufficientcriterion} it's enough to show that $[a; x_1, \dots, x_n]_{1,n} = 0$ for all $x_1, \dots, x_n$ in ${\rm Ext}^*_\Lambda(\Bbbk, \Bbbk)$, and by linearity it's enough to do this for chains $c_1, \dots, c_n$. Since $a c_1 \dots c_n = c_1 \dots c_n a$, we deduce that 
\begin{align*}
	m_{n+1}(a, c_1, \dots, c_n) = \pm m_{n+1}(c_1, \dots, c_n, a) 
\end{align*}
and so the two products are zero or non-zero simultaneously. If both are zero we are done by Lemma \ref{simplifiedcommutator}, and if both are nonzero the elements $c_1, \dots, c_n \in {\rm Ext}^*_\Lambda(\Bbbk, \Bbbk)$ have odd degree by Proposition \ref{vanishingpatterns}. Since $kC_{r_i}^{\vee} = {\rm Ext}^{r_i+1}_\Lambda(\Bbbk, \Bbbk)$, we deduce that $r_1, \dots, r_n$ are even. We can then compute the sign precisely: 
		\begin{align*}
			m_{n+1}(a, c_1, \dots, c_n) &= (-1)^{(n+1)r_0 + r_0r_n + r_0 + r_n}\ \! ac_1 \dots c_n \\ 
			&= (-1)^{(n+1)r_0 + r_0}\ \! ac_1 \dots c_n \\
			&= (-1)^{(n+1) + r_0}\ \! c_1 \dots c_n a\\ 
			&= (-1)^{n+1}(-1)^{r_0}\ \! c_1 \dots c_n a\\ 
			&= (-1)^{n+1} (-1)^{(n+1)r_1 + r_1r_0 + r_1 + r_0}\ \! c_1 \dots c_n a\\ 
			&= (-1)^{n+1} m_{n+1}(c_1, \dots, c_n, a).
		\end{align*} 
		Lemma \ref{simplifiedcommutator} then gives 
		\begin{align*}
			[a;c_1, \dots, c_n]_{1,n} = m_{n+1}(a, c_1, \dots, c_n) + (-1)^{n} m_{n+1}(c_1, \dots, c_n, a) = 0, 
		\end{align*}
which is what we wanted.	\end{proof}

\section{Periodicity operators for Gorenstein algebras}\label{sec:Period}


We now turn to the construction of $A_\infty$-central operators $\chi_\upgamma \in {\rm Ext}^*_\Lambda(\Bbbk, \Bbbk)$ associated to special Anick chains $\upgamma$ which we will call \emph{stable relation cycles}. In this subsection we will always assume that $\Lambda$ is Gorenstein. 

We define the \emph{period} of a Gorenstein monomial algebra $\Lambda$ as follows. The algebra $\Lambda$ contains finitely many perfect paths and so there are finitely many indecomposable Gorenstein-projectives $M$ over $\Lambda$, each with a periodic minimal projective resolution. We let $p_M$ denote the minimal period of this resolution. 

\begin{defn}The period of $\Lambda$ is the least common multiple of all periods $p_M$, as $M$ runs over indecomposable non-projective Gorenstein-projectives. We denote that number by $\ell$.
\end{defn}

One notes that this is unchanged if we replace right modules by left modules as the duality $M \mapsto M^*$ preserves the period of Gorenstein-projectives. 


\begin{defn}[Stable relation cycles] Let $\upgamma$ be an $(s-1)$-chain. We say that $\upgamma$ is a stable relation cycle if:
	\begin{enumerate}[i)]
	\item $s \geq \dim \Lambda + 1$ and $s$ is even. 
	\item $s$ is a multiple of $\ell$. 
	\item $\upgamma = p_0 p_1 \dots p_{s-1}$ for a perfect cycle ${\bf p} = (p_0, p_1, \dots, p_{s-1})$ with tail $t_\upgamma = p_{s-1}$.
	\item $\upgamma = q_0 q_1 \dots q_{s-1}$ for a perfect cycle ${\bf q} = (q_0, q_1, \dots, q_{s-1})$ with head $h_{\upgamma} = q_0$.
\end{enumerate}
\end{defn}

Let us unpack the definition.  

Properties iii) and iv) show that stable relation cycles enjoy the unique extension property of Corollary \ref{uniqueextensionproperty}. Specifically, for any $k \geq 1$, the path $\upgamma p_{s} \dots p_{s-1 + k}$ is the unique extension of $\upgamma$ to the right as an $(s-1+k)$-chain, then with tail $t_{\upgamma p_s \dots p_{s-1+k}} = p_{s-1+k}$, and dually $q_{-k} \dots q_{-1} \upgamma$ is the unique extension to the left as an $(s-1+k)$-chain, then with head $h_{q_{-k} \dots q_{-1} \upgamma} = q_{-k}$. Note that the decomposition iii) and iv) can differ as seen in Example \ref{perfectwalkexample}. 

The unique extension property of stable relation cycles $\upgamma$ will give us control over multiplication against $\upgamma^\vee$ in ${\rm Ext}^*_\Lambda(\Bbbk, \Bbbk)$. However $\upgamma^\vee$ is not a central class in general, and the additional constraints i-ii) will allow us to ``rotate'' $\upgamma$ without breaking the constraints iii)-iv), to obtain new stable relation cycles whose total sum will form an $A_\infty$-central class $\chi_{\upgamma}$ in ${\rm Ext}^*_\Lambda(\Bbbk, \Bbbk)$. 

Green, Snashall and Solberg have introduced in \cite{GSS06} a closely related notion of stability for relation cycles which is weaker than
what we consider here; theirs is concerned with the stability of tails and heads under taking power, which also follows from our definition.
However the precise decomposition in terms of perfect paths is what gives us the additional leverage needed to study higher commutators
and the finite generation conditions. The whole apparatus requires a careful balancing act, and so we begin with studying properties of perfect walks. 

\subsection{Perfect walks of even length}

Now, in general if $(w_0, w_1, \dots, w_{n-1})$ is a perfect walk of even length $n$, then the path $w_0 w_1 \dots w_{n-2} w_{n-1}$ is a concatenation of relations $r_0 = w_0 w_1$, $r_2 = w_2 w_3$, \dots, $r_{n-2} = w_{n-2} w_{n-1}$, and so we have $w_0 w_1 \dots w_{n-1} = r_0 r_2 \dots r_{n-2}$. In particular stable relation cycles are concatenations of relations. Our next aim will be to recognise stable relation cycles amongst perfect walks of even length. This will come through a series of lemmas. We remark that Lemma \ref{evenwalks} below is closely related to \cite[Prop. 2.3]{GSS06}.

\begin{lem}\label{evenwalks} Let $(w_0, w_1, \dots, w_{n-1})$ be a perfect walk of even length $n$. Then $\upgamma = w_0 w_1 \dots w_{n-1}$ is an $(n-1)$-chain and $w_{n-1}$ is a right divisor of $t_{\upgamma}$.  
\end{lem}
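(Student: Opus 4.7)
My plan is to argue by induction on the even length $n \geq 2$ of the walk.

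For the base case $n = 2$, I would first observe that $w_0 w_1$ is itself a generating monomial relation of $\Lambda$. A generating relation $r$ sitting as a subword of $w_0 w_1$ cannot lie entirely within $w_0$ or within $w_1$, since both are perfect paths in $\mathcal{P}_\Lambda$. So $r$ straddles the junction: we may write $r = r_0 r_1$ with $w_0 = A r_0$ and $w_1 = r_1 B$, where $r_0, r_1$ are nontrivial. Then $w_0 r_1 = A r$ vanishes in $\Lambda$, and minimality of $w_1 \in R(w_0)$ forces $r_1 = w_1$; dually $r_0 = w_0$, so $r = w_0 w_1$. Hence $\upgamma = w_0 w_1$ is a $1$-chain, and since $|w_0| \geq 1$ the tail $t_\upgamma$ (obtained by removing the leftmost arrow of $\upgamma$) contains $w_1$ as a right divisor.

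For the inductive step with $n \geq 4$, I would let $\upgamma_0 = w_0 \cdots w_{n-3}$, which by induction is an $(n-3)$-chain with $t_{\upgamma_0} = u w_{n-3}$ for some path $u$. I would then extend $\upgamma_0$ by two successive Anick-chain steps via Lemma~\ref{extendingchains}: first by a minimal $p \in R(t_{\upgamma_0})$ to obtain an $(n-2)$-chain $\upgamma_0 p$, then by a minimal $q \in R(t_{\upgamma_0 p})$ to obtain an $(n-1)$-chain $\upgamma_0 p q$. The key assertion that closes the induction is that $pq = w_{n-2} w_{n-1}$ as paths in $kQ$, so that $\upgamma_0 p q = \upgamma$; the tail $t_\upgamma = q$ then contains $w_{n-1}$ as a right divisor since $q$ is a right segment of $pq = w_{n-2} w_{n-1}$.

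The main obstacle is establishing the path identity $pq = w_{n-2} w_{n-1}$. The difficulty is that $p$ may be strictly shorter than $w_{n-2}$: while $(w_{n-3}, w_{n-2})$ being a perfect pair makes $w_{n-2}$ a right zero cofactor of $u w_{n-3}$, a relation overlapping nontrivially with the prefix $u$ can force a shorter minimal $p$. A concrete instance is $\Lambda = k[t]/(t^3)$ with walk $(t^2, t, t^2, t)$: there $p = t$ is a proper beginning of $w_{n-2} = t^2$ and the remainder $q = t^2$ yields $pq = t \cdot t^2 = t^2 \cdot t = w_{n-2} w_{n-1}$. I would handle the general case by carefully analyzing where a witnessing relation for $p \in R(u w_{n-3})$ can sit, and invoking the uniqueness of the forward perfect walk $(w_{n-3}, w_{n-2}, w_{n-1})$ together with the minimality of generating relations to force the decomposition $pq = w_{n-2} w_{n-1}$.
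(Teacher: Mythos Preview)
Your plan is correct and matches the paper's proof: induction on even $n$, the base case showing $w_0 w_1$ is itself a generating relation via the perfect-pair constraints, and the inductive step extending $\upgamma_0 = w_0 \cdots w_{n-3}$ by two Anick steps. For your flagged obstacle the paper's argument is short once you \emph{choose} $p$ explicitly as the prefix of $w_{n-2}$ lying in $R(t_{\upgamma_0})$ (write $w_{n-2} = p w''$): since $p\,(w'' w_{n-1}) = w_{n-2}w_{n-1} = 0$, some prefix $q$ of $w'' w_{n-1}$ lies in $R(p)$; as $pw'' = w_{n-2} \neq 0$ in $\Lambda$ we must have $q = w'' q'$ with $q'$ a prefix of $w_{n-1}$, and then $w_{n-2}\, q' = pq = 0$ together with $R(w_{n-2}) = \{w_{n-1}\}$ forces $q' = w_{n-1}$, giving $pq = w_{n-2} w_{n-1}$ and $t_\upgamma = q = w'' w_{n-1}$.
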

\begin{proof}
	Write $n = 2k$, we work by induction on $k \geq 1$. For $k = 1$, $\upgamma = w_0 w_1 \in R = C_1$ is a relation and $w_0 w_1 = \upgamma = a t_{\upgamma}$ for some arrow $a$, so that $w_1$ right divides $t_{\upgamma}$ for length reasons. Next, consider $\upgamma' = w_0 w_1 \dots w_{2k-2}$ which is a chain by induction, and for which $w_{2k-2}$ right divides $t_{\upgamma'}$. This gives $t_{\upgamma'} w_{2k-1} = 0$ in $\Lambda$, so there is a left divisor $w'$ of $w_{2k-1} = w' w''$ such that $\upgamma' w'$ is a chain with tail $w'$.  

	Consider $w'' w_{2k}$, and note that $w' w'' w_{2k} = w_{2k-1} w_{2k} = 0$ in $\Lambda$. Hence we can find a left divisor $u$ of $w'' w_{2k}$ such that $w' u = 0$ in $\Lambda$ and such that $\upgamma' w' u$ forms a chain. Since $w' w'' = w_{2k-1} \neq 0$ in $\Lambda$, we have $u = w'' u'$ and in particular $u'$ left divides $w_{2k}$. But we have $w_{2k-1} u' = w' w'' u' = w' u =  0$, which shows that $w_{2k}$ left divides $u'$, thus forcing $w_{2k} = u'$. 

	Putting it together, this gives a chain $\upgamma' w' u = \upgamma' w' w'' u' = \upgamma' w_{2k-1} w_{2k}$ with tail $u$ right divisible by $w_{2k}$, as claimed.  
\end{proof}

\begin{lem}\label{uniquenessofrelations} Let $\upgamma = w_0 w_1 \dots w_{n-1}$ be an $(n-1)$-chain given by a perfect walk of even length $n$. Then:

	\begin{enumerate}[i)]
	\item The relations $r_0, r_2, \dots, r_{n-2}$ are independent of the choice of perfect walk $(w_0, w_1, \dots, w_{n-1})$ with $\upgamma = w_0 w_1 \dots w_{n-1}$. 
	\item If $\upgamma' = w'_0 w'_1 \dots w'_{n-1}$ is another perfect walk of even length $n$ with $r'_0 = r_0$ or $r'_{n-2} = r_{n-2}$, then $\upgamma' = \upgamma$. 
\end{enumerate}
\end{lem}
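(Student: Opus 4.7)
My plan rests on one preliminary fact: for any perfect pair $(p,q)$ in $\Lambda$, the product $pq$ is itself a defining relation. Since $pq \in I$, some defining relation $\rho$ divides the path $pq$; as $p$ and $q$ are both nonzero in $\Lambda$, the relation $\rho = \alpha\beta$ must straddle the $p/q$ boundary with $\alpha$ a nontrivial suffix of $p$ and $\beta$ a nontrivial prefix of $q$, and the minimality conditions $R(p) = \{q\}$ and $L(q) = \{p\}$ then force $\alpha = p$, $\beta = q$, giving $\rho = pq$. In particular, every $r_{2i} = w_{2i}w_{2i+1}$ arising in a perfect walk is a defining relation of~$\Lambda$.

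For (i), I would combine this with the standing minimality assumption on the set of defining relations. Both $r_0$ and $r'_0$ are defining relations sitting as prefixes of the common path $\upgamma$, so the shorter is a sub-path of the longer; as no defining relation may properly divide another, $r_0 = r'_0$. Working inductively, once I know $\sum_{j<i}|r_{2j}| = \sum_{j<i}|r'_{2j}|$, the defining relations $r_{2i}$ and $r'_{2i}$ appear as sub-paths of $\upgamma$ starting at identical offsets, and the same minimality argument delivers $r_{2i} = r'_{2i}$.

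For (ii), passing to $\Lambda^{\rm op}$ reduces me to the case $r_0 = r'_0$. Write $r_0 = w_0 w_1 = w'_0 w'_1$; if the two perfect pair decompositions coincide, Proposition~\ref{weakuniqueextensionproperty} propagates this throughout and $\upgamma = \upgamma'$ is immediate. Otherwise I may assume $w'_0 = w_0 u$ for some non-trivial path $u$, so that $w_1 = u w'_1$, and I aim to prove by induction on $i$ that the two walks remain in lockstep,
\begin{equation*}
w'_{2i} = w_{2i}\, u \qquad \text{and} \qquad w_{2i+1} = u\, w'_{2i+1}.
\end{equation*}
For the inductive step I would compute $w'_{2i}w'_{2i+1} = w_{2i}(u w'_{2i+1}) = 0$ in $\Lambda$; the minimality of $R(w_{2i}) = \{w_{2i+1}\}$ then exhibits $w_{2i+1}$ as a left factor, $uw'_{2i+1} = w_{2i+1}\eta$, and comparing the defining relations $r_{2i} = w_{2i}w_{2i+1}$ and $r'_{2i} = r_{2i}\eta$ via the minimality of defining relations will force $\eta = 1$. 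A dual argument, using the $L$ side of the next perfect pair $(w'_{2i+1}, w'_{2i+2})$ and the associated defining relation $r_{2i+1}$, should then deliver $w'_{2i+2} = w_{2i+2}\, u$ and close the induction. Once the lockstep is established, every $r_{2i} = (w_{2i}u)w'_{2i+1} = w'_{2i}w'_{2i+1} = r'_{2i}$, and together with part~(i) I conclude $\upgamma = r_0 r_2 \cdots r_{n-2} = r'_0 r'_2 \cdots r'_{n-2} = \upgamma'$. The main obstacle I anticipate is the dual computation pinning the shift at step $2i+2$ to be exactly $u$ rather than some a priori different path: this will require a careful interplay of the $R$ and $L$ minimality conditions along the two walks, invoked together with the preliminary fact and the minimality of the defining relations.
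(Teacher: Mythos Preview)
Your preliminary fact and your argument for part (i) are correct and essentially match the paper's proof, which also proceeds by noting that two defining relations appearing as prefixes of the same path must coincide by minimality.

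For part (ii), however, there is a genuine gap. Your inductive hypothesis --- that the two walks stay in \emph{lockstep with a constant shift $u$}, i.e.\ $w'_{2i}=w_{2i}u$ and $w_{2i+1}=uw'_{2i+1}$ for all $i$ --- is stronger than what you need and not something you actually establish. Observe that your displayed ``inductive step'' is vacuous: from $w'_{2i}=w_{2i}u$ and $w_{2i+1}=uw'_{2i+1}$ you get $r'_{2i}=w'_{2i}w'_{2i+1}=w_{2i}w_{2i+1}=r_{2i}$ and $\eta=1$ immediately, without invoking $R(w_{2i})$ at all. The real work --- passing from index $2i+1$ to $2i+2$ --- is deferred to an unspecified ``dual argument'', and it is precisely there that pinning the shift to be exactly $u$ becomes problematic. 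In fact, if you trace the paper's argument (or its forward analogue), you will see that the shifts that appear at successive stages need not all be equal; the paper never claims this.

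The remedy is simply to weaken the inductive statement to $r'_{2i}=r_{2i}$. Assuming this, write $w'_{2i}=w_{2i}x$ (or the reverse); then $w_{2i+1}=xw'_{2i+1}$, so $w_{2i+1}w'_{2i+2}=x(w'_{2i+1}w'_{2i+2})=0$ in $\Lambda$, and $R(w_{2i+1})=\{w_{2i+2}\}$ yields $w'_{2i+2}=w_{2i+2}y$ for some $y$. Next, $w'_{2i+2}w'_{2i+3}=w_{2i+2}(yw'_{2i+3})=0$ with $yw'_{2i+3}$ a proper suffix of the relation $w'_{2i+2}w'_{2i+3}$, hence nonzero; now $R(w_{2i+2})=\{w_{2i+3}\}$ gives $yw'_{2i+3}=w_{2i+3}z$, so $r'_{2i+2}=r_{2i+2}z$ and minimality of relations forces $z$ empty. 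This is exactly the forward mirror of the paper's backward argument (which starts from $r'_{n-2}=r_{n-2}$ and uses the $L$-conditions), and it closes the induction without any claim about the shifts $x,y$ coinciding.
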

\begin{proof}i). If $\upgamma = r_0 r_2 \dots r_{s-2} = \widetilde{r}_0 \widetilde{r}_2 \dots \widetilde{r}_{s-2}$ is written in two ways as a concatenation of relations, then one of $r_0, \widetilde{r}_0$ divides the other and so $r_0 = \widetilde{r}_0$. Applying the same argument inductively to the substrings $r_{2k} \dots r_{s-2} = \widetilde{r}_{2k} \dots \widetilde{r}_{s-2}$ gives the result. 
 
ii). We assume that $r'_{n-2} = r_{n-2}$, the case $r'_{0} = r_{0}$ is similar. We will show that the previous relation in a perfect walk of even length is uniquely determined. 

The hypothesis gives $w'_{n-2} w'_{n-1} = r'_{n-2} = r_{n-2} = w_{n-2} w_{n-1}$, so one of $w'_{n-2}, w_{n-2}$ left divides the other, say $w_{n-2} = w'_{n-2} x$. Then $w'_{n-3} w_{n-2} = w'_{n-3} w'_{n-2} x = 0$ in $\Lambda$. As $w'_{n-3} \neq 0$ in $\Lambda$, we must have $w'_{n-3} = y w_{n-3}$ since $(w_{n-3}, w_{n-2})$ forms a perfect pair. Continuing, we have $w'_{n-4} w'_{n-3} = w'_{n-4} y w_{n-3} = 0$ in $\Lambda$ and so $w'_{n-4} y = z w_{n-4}$. We obtain the equality $w'_{n-4} w'_{n-3} = w'_{n-4} y w_{n-3} = z w_{n-4} w_{n-3}$, so that $z$ is the empty word and we have equality of relations $r'_{n-4} = w'_{n-4} w'_{n-3} = w_{n-4} w_{n-3} = r_{n-4}$. Iterating, we get $r'_{2k} = r_{2k}$ for all $k$ and so $\upgamma' = \upgamma$. 
\end{proof}

The next result allows us to rewrite perfect walks of even length in normal forms. 
\begin{lem}[Perfect rewriting]\label{perfectrewritinglemma} Let $(w_0, w_1, \dots, w_{n-1})$ be a perfect walk of even length $n$ and let $\upgamma = w_0 w_1 \dots w_{n-1}$. 
	\begin{enumerate}[i)]
	\item If $t_\upgamma $ is perfect, then $\upgamma = p_0 \dots p_{n-1}$ is given by the perfect walk with $p_{n-1} = t_\upgamma$. 
	\item If $h_\upgamma $ is perfect, then $\upgamma = q_0  \dots q_{n-1}$ is given by the perfect walk with $q_0 = h_\upgamma$.
\end{enumerate}
\end{lem}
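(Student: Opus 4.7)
The plan is to prove part i); part ii) follows by the analogous argument using heads and first relations in place of tails and last relations. The strategy for i) is to apply Lemma~\ref{uniquenessofrelations}(ii): it suffices to exhibit a candidate perfect walk of even length $n$ whose associated last relation coincides with $r_{n-2} = w_{n-2}w_{n-1}$, for then the given $\upgamma$ must agree with the product of this candidate walk.

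Since $t_\upgamma =: p_{n-1}$ is perfect, it lies on a perfect cycle, and iterating $L(p_i) = \{p_{i-1}\}$ determines its predecessors $p_{n-2}, \ldots, p_0$ uniquely. This yields the candidate perfect walk $(p_0, \ldots, p_{n-1})$, and by Lemma~\ref{evenwalks} the product $\upgamma' := p_0 \cdots p_{n-1}$ is an $(n-1)$-chain with last relation $r'_{n-2} = p_{n-2}p_{n-1}$. It thus suffices to prove $r_{n-2} = p_{n-2}p_{n-1}$, after which Lemma~\ref{uniquenessofrelations}(ii) yields $\upgamma = \upgamma'$.

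To establish this equality, recall from the discussion preceding Lemma~\ref{uniquenessofrelations} that $r_{n-2}$ is a minimal monomial relation in $R$. Using the chain decomposition $\upgamma = \upgamma_{n-2}\, p_{n-1}$ (since $t_\upgamma = p_{n-1}$), both $r_{n-2}$ and $p_{n-1}$ are right-divisors of $\upgamma$ ending at the same vertex, so one right-divides the other; since $p_{n-1}$ is perfect and hence contains no relation as a sub-path, $r_{n-2}$ cannot right-divide $p_{n-1}$. Therefore $p_{n-1}$ is a proper right-divisor of $r_{n-2}$, and we may write $r_{n-2} = \alpha \cdot p_{n-1}$ with $\alpha$ nonempty. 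The minimality of $r_{n-2}$ then forces $\alpha$ to be a minimal left zero-cofactor of $p_{n-1}$: any proper right-segment $\alpha' \subsetneq \alpha$ with $\alpha' p_{n-1} = 0$ would produce a proper sub-path of $r_{n-2}$ which is zero in $\Lambda$ and hence contains a relation, contradicting the minimality of $r_{n-2}$. Thus $\alpha \in L(p_{n-1}) = \{p_{n-2}\}$, so $\alpha = p_{n-2}$ and $r_{n-2} = p_{n-2}p_{n-1}$.

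Part ii) is proved dually: setting $q_0 := h_\upgamma$ and extending forward via $R(q_i) = \{q_{i+1}\}$ yields a candidate perfect walk $(q_0, \ldots, q_{n-1})$, and the symmetric argument (using the first relation $r_0 = w_0 w_1$ together with $R(q_0) = \{q_1\}$) shows $r_0 = q_0 q_1$, after which the $r'_0 = r_0$ clause of Lemma~\ref{uniquenessofrelations}(ii) concludes. The main obstacle is the identification of last relations in part i): its proof delicately combines the perfectness of $p_{n-1}$ (blocking $r_{n-2}$ from lying inside $p_{n-1}$), the singleton condition $L(p_{n-1}) = \{p_{n-2}\}$ (pinning down $\alpha$), and the minimality of the relation set $R$ (forcing $\alpha = p_{n-2}$ rather than a longer extension).
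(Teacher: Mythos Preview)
Your proof is correct and follows essentially the same approach as the paper: both reduce part i) to showing that the last relation $r_{n-2}=w_{n-2}w_{n-1}$ coincides with $p_{n-2}p_{n-1}$, and then invoke Lemma~\ref{uniquenessofrelations}(ii). The only tactical difference is which perfect pair is exploited to identify that relation: the paper compares $p_{n-1}$ with $w_{n-1}$ and uses $L(w_{n-1})=\{w_{n-2}\}$ from the \emph{given} walk, whereas you compare $p_{n-1}$ directly with the relation $r_{n-2}$ and use $L(p_{n-1})=\{p_{n-2}\}$ from the \emph{new} walk; both routes are equally short and yield the same conclusion.
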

\begin{proof} We prove i) as ii) is dual. Assuming $t_{\upgamma} = p_{n-1}$ perfect, since both $p_{n-1}$ and $w_{n-1}$ right divide $\upgamma$, one must divide the other, say $p_{n-1} = x w_{n-1}$ for some $x$ (the other case is similar). Then $p_{n-2} p_{n-1} = p_{n-2} x w_{n-1} = 0$ in $\Lambda$. Since $p_{n-2} x \neq 0$ in $\Lambda$ as it is a proper divisor of $p_{n-2} p_{n-1}$, we must have $p_{n-2} x = y w_{n-2}$ for some $y$, which gives $p_{n-2} p_{n-1} = p_{n-2} x w_{n-1} = y w_{n-2} w_{n-1}$. It follows that $y$ is the empty word and $p_{n-2} p_{n-1} = w_{n-2} w_{n-1}$.

Now $\upgamma' = p_0 p_1 \dots p_{n-2} p_{n-1}$ and $\upgamma = w_0 w_1 \dots w_{n-2} w_{n-1}$ share the last relation $p_{n-2} p_{n-1} = w_{n-2} w_{n-1}$, and so $\upgamma' = \upgamma$ by Lemma \ref{uniquenessofrelations} ii). 
\end{proof}

We now obtain a recognition lemma for stable relation cycles. 

\begin{lem}[Recognition Lemma]\label{recognitionlemma} Let $(w_0, w_1, \dots, w_{n-1})$ be a perfect walk, and assume that $n \geq \dim \Lambda + 1$, $n$ is even and divisible by $\ell$. Then $\upgamma = w_0 w_1 \dots w_{n-1}$ is a stable relation cycle. 
\end{lem}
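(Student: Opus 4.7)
The plan is to verify the four defining conditions of a stable relation cycle one by one, assembling the structural results developed earlier. By Lemma \ref{evenwalks}, $\upgamma = w_0 w_1 \cdots w_{n-1}$ is an $(n-1)$-chain, so setting $s := n$ we have an $(s-1)$-chain to which the definition applies. Conditions (i) and (ii) are then immediate from the hypotheses: $s = n \geq \dim\Lambda + 1$, $n$ is even, and $\ell \mid n$.

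To establish conditions (iii) and (iv), observe that the chain degree $n-1$ satisfies $n-1 \geq \dim\Lambda$, so by Corollary \ref{highdegreechains} both $t_\upgamma$ and $h_\upgamma$ are perfect paths. Lemma \ref{perfectrewritinglemma} then rewrites $\upgamma$ as a perfect walk $\upgamma = p_0 p_1 \cdots p_{n-1}$ with $p_{n-1} = t_\upgamma$, and dually as a perfect walk $\upgamma = q_0 q_1 \cdots q_{n-1}$ with $q_0 = h_\upgamma$. What remains is to upgrade these perfect walks to perfect cycles, and this is the step where the divisibility hypothesis $\ell \mid n$ comes into play.

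For the right decomposition, the perfect walk $(p_0, \dots, p_{n-1})$ extends uniquely in both directions, by the perfect-pair relations $R(p_i) = \{p_{i+1}\}$ and $L(p_i) = \{p_{i-1}\}$, to a doubly-infinite periodic sequence whose minimal period $r$ is the period of the unique perfect cycle containing $p_{n-1}$. By Theorem \ref{GPclassification}, $r$ equals the minimal period of the projective resolution of the Gorenstein-projective module $p_{n-1}\Lambda$, so $r \mid \ell$ by the very definition of $\ell$ as the least common multiple of such periods. The hypothesis $\ell \mid n$ then forces $r \mid n$, hence $p_n = p_0$ in the infinite extension, so that $(p_{n-1}, p_0)$ is a perfect pair and $(p_0, \dots, p_{n-1})$ closes up to a perfect cycle. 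The head decomposition $(q_0, \dots, q_{n-1})$ is handled symmetrically by passing to $\Lambda^{\mathrm{op}}$. This closure step is the only subtle point in the argument; the rest is a direct application of the preceding lemmas.
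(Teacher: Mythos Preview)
Your proof is correct and follows essentially the same route as the paper's own argument: use Lemma~\ref{evenwalks} to get an $(n-1)$-chain, invoke perfectness of $t_\upgamma$ and $h_\upgamma$ (the paper cites Theorem~\ref{gorensteincharacterisation} directly rather than Corollary~\ref{highdegreechains}, but either works), apply Lemma~\ref{perfectrewritinglemma} to rewrite the walk, and finally use $\ell \mid n$ to close the walk into a cycle. Your justification that the minimal cycle period $r$ divides $\ell$ via Theorem~\ref{GPclassification} is in fact more explicit than the paper's one-line assertion, and is sound because that theorem's bijection $p \mapsto p\Lambda$ forces the resolution period to coincide with the cycle period.
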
 
\begin{proof}
	First $\upgamma$ is an $(n-1)$-chain by Lemma \ref{evenwalks}. Next, since $n-1 \geq \dim \Lambda$ we see that $p_{n-1} := t_\upgamma$ and $q_0 := h_\upgamma$ are perfect by Theorem \ref{gorensteincharacterisation}, and Lemma \ref{perfectrewritinglemma} shows that $\upgamma = p_0 p_1 \dots p_{n-1} = q_0 q_1 \dots q_{n-1}$. Lastly, since $n$ is a multiple of the period $\ell$, it is then a multiple of the minimal period of every perfect cycle and so we see that ${\bf p} = (p_0, p_1, \dots, p_{n-1})$ and ${\bf q} = (q_0, q_1, \dots, q_{n-1})$ are in fact perfect cycles. 
\end{proof}

\begin{cor}\label{perfpathstabilise} Let $p$ be a perfect path in $\Lambda$. Then $p = p_0$ $($resp. $p = p_{s-1})$ can be extended to a stable relation cycle $\upgamma = p_0 p_1 \dots p_{s-1}$. 
\end{cor}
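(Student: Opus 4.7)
The plan is to derive this as a direct consequence of the Recognition Lemma (Lemma~\ref{recognitionlemma}). Recall that any perfect path $p$ sits in a unique perfect cycle, and as noted earlier in the discussion of perfect walks, $p$ extends uniquely in either direction to an arbitrarily long perfect walk $(\ldots, p_{-1}, p_0, p_1, \ldots)$ with $p = p_0$ (or with $p = p_j$ for any chosen index $j$, by relabeling). All I need to do is choose the length of the truncated walk carefully so that the Recognition Lemma applies.

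More precisely, I would choose an integer $s$ that is simultaneously even, divisible by the period $\ell$, and satisfies $s \geq \dim\Lambda + 1$; for instance, any sufficiently large multiple of $\operatorname{lcm}(2,\ell)$ works. For the case $p = p_0$, I set $p_0 := p$ and extend to the right within its perfect cycle, obtaining a perfect walk $(p_0, p_1, \ldots, p_{s-1})$. The Recognition Lemma then immediately guarantees that $\upgamma = p_0 p_1 \cdots p_{s-1}$ is a stable relation cycle. The case $p = p_{s-1}$ is handled dually, by extending $p$ to the left within its perfect cycle to obtain a walk $(p_0, p_1, \ldots, p_{s-1})$ with $p_{s-1} = p$ and again invoking the Recognition Lemma.

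There is essentially no obstacle here, since all the combinatorial work has been absorbed into the Recognition Lemma; the only task is to verify that a length $s$ satisfying the three numerical constraints (even, divisible by $\ell$, and at least $\dim\Lambda + 1$) exists, which is immediate. Note that the decomposition $p_0 p_1 \cdots p_{s-1}$ we produce need not coincide with the canonical decomposition iii) or iv) witnessing the stable relation cycle property of $\upgamma$; but the corollary only asserts the existence of \emph{some} perfect walk decomposition of a stable relation cycle starting (respectively ending) with the given perfect path $p$, which is exactly what our construction provides.
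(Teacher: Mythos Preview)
Your proposal is correct and follows essentially the same approach as the paper's own proof: extend $p$ to a perfect walk of length $s$ with $s$ even, divisible by $\ell$, and at least $\dim\Lambda + 1$, then invoke the Recognition Lemma (Lemma~\ref{recognitionlemma}). Your final remark clarifying that the constructed decomposition need not coincide with the canonical ones in iii) or iv) is a helpful addition, but the core argument is identical.
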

\begin{proof}
	Simply take the perfect walk $(p_0, p_1, \dots, p_{s-1})$ of even length $s$, satisfying $s \geq \dim \Lambda + 1$ and with $s$ some multiple of $\ell$. 
\end{proof}

\subsection{Rotating stable relation cycles}

Now let $\upgamma = a_0 a_1 \dots a_{n-1} \in C_{s-1}$ be a stable relation cycle, which we write as a path of length $n$ with $a_i \in Q_1$. We now want to ``rotate'' $\upgamma$ to produce new stable relation cycles, also of path length $n$. We can think of $\upgamma$ as a substring of a periodic string, infinite in both directions: 
\begin{align}
	\underline{a}:	\dots a_0 a_1 \dots a_{n-1} (a_0 a_1 \dots a_{n-1}) a_0 a_1 \dots a_{n-1} \dots 	
\end{align}
One can recover this string by periodicity from any length $n$ substring. Setting $\upgamma_0 = \upgamma$, we define the set $S = \{ \upgamma_0 , \upgamma_1, \dots, \upgamma_{n_{\upgamma} - 1} \}$ by the property that: 
\begin{enumerate}[i)] 
\item $\upgamma_i = a_{j_i}a_{j_i + 1} \dots a_{j_i + n-1}$ is an $(s-1)$-chain which is also a stable relation cycle. 
\item $0 = j_0 < j_1 < \dots < j_{n_\upgamma - 1} \leq n-1$. 
\item For $\upgamma_i = a_{j_i} a_{j_i+1} \dots a_{j_i+n-1} \in S$, we have $\upgamma_i \neq a_{j} a_{j+1} \dots a_{j+n-1}$ if $0 \leq j < j_i$. 
\item $\{j_0, j_1, \dots, j_{n_\upgamma - 1} \} \subseteq \{ 0, 1, \dots, n-1 \}$ is maximal with these property.  
	\end{enumerate}
	We think of $n_\upgamma$ as the number of possible distinct rotations of $\upgamma$ as a stable relation cycle. Moreover, all $\upgamma_i$ are length $n$ substrings of $\underline{a}$, and it is clear that the same set is produced by this construction starting from any other $\upgamma_i \in \{ \upgamma_0, \upgamma_1, \dots, \upgamma_{n_{\upgamma}-1} \}$. We call this set $S$ the associated set of $\upgamma$, which is also the equivalence class for an equivalence relation on stable relation cycles which we denote $\upgamma \sim \upgamma'$. 

	\begin{lem}[Completeness Lemma]\label{completeness} Let $\upgamma \in C_{s-1}$ be a stable relation cycle, and write $\upgamma = p_0 p_1 \dots p_{s-1} = q_0 q_1 \dots q_{s-1}$ for the corresponding perfect walks. Then for any $k \in \Z$, both $p_k p_{k+1} \dots p_{k+s-1}$ and $q_k q_{k+1} \dots q_{k+s-1}$ are in $\{ \upgamma_0, \upgamma_1, \dots, \upgamma_{n_\upgamma - 1} \}$. 
	\end{lem}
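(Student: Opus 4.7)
The plan is to reduce the lemma to two sub-claims: (a) that $p_k p_{k+1} \dots p_{k+s-1}$ is an $(s-1)$-chain which is itself a stable relation cycle, of the same path length $n$ as $\upgamma$; and (b) that, as a path in $Q$, it coincides with some cyclic rotation $a_j a_{j+1} \dots a_{j+n-1}$ of $\upgamma$ for some $j \in \{0, 1, \dots, n-1\}$. Granted both, the maximality condition (iv) in the definition of the associated set will force $p_k p_{k+1} \dots p_{k+s-1} = \upgamma_i$ for some $i$, and the same argument applied to $(q_i)$ handles the head-decomposition case.

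For sub-claim (a) I would invoke the Recognition Lemma \ref{recognitionlemma}. The key observation is that a cyclic shift of a perfect cycle is again a perfect cycle with the same minimal period, so the shifted tuple $(p_k, p_{k+1}, \dots, p_{k+s-1})$ is still a perfect walk of length exactly $s$. The numerical hypotheses $s \geq \dim \Lambda + 1$, $s$ even and $\ell \mid s$ depend only on $s$ and are therefore preserved, so the Recognition Lemma applies verbatim and exhibits $p_k p_{k+1} \dots p_{k+s-1}$ as a stable relation cycle. Its path length equals
\[
\sum_{i=k}^{k+s-1} |p_i| \;=\; \sum_{i=0}^{s-1} |p_i| \;=\; n
\]
by the $s$-periodicity of the sequence $(p_i)_{i \in \Z}$.

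For sub-claim (b) I would introduce the doubly infinite $n$-periodic arrow sequence $\underline{a}$ obtained by concatenating copies of $\upgamma = a_0 a_1 \dots a_{n-1}$ in both directions. Extending the perfect cycle $(p_i)$ by periodicity recovers exactly $\underline{a}$ with a different ``punctuation'', i.e. the infinite arrow sequence $\dots p_{-1} p_0 p_1 \dots$ equals the infinite arrow sequence $\dots a_{-1} a_0 a_1 \dots$, since both are obtained by periodically repeating the arrows of $\upgamma$. Consequently $p_k p_{k+1} \dots p_{k+s-1}$ appears inside $\underline{a}$ as the length-$n$ window starting at arrow position $|p_0| + |p_1| + \dots + |p_{k-1}|$ reduced modulo $n$, and this window is one of the cyclic rotations $a_j a_{j+1} \dots a_{j+n-1}$. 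Combining with step (a) and invoking maximality of the associated set concludes the argument, and the dual reasoning applied to the sequence $(q_i)$ disposes of the second assertion. I do not expect a genuine obstacle: the only delicate point is verifying carefully that the $p$- and $q$-decompositions do induce the same doubly infinite word $\underline{a}$, and tracking the indexing modulo $s$ and modulo $n$ when $k$ is negative, both of which are routine once the setup is in place.
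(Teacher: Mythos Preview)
Your proposal is correct and follows essentially the same approach as the paper: the paper's proof likewise argues that the shifted words are length-$n$ substrings of the periodic string $\underline{a}$ (your claim (b)) and then invokes the Recognition Lemma to see they are stable relation cycles (your claim (a)), concluding by the definition of the associated set. The only difference is the order of presentation and that you spell out the path-length computation and the indexing a bit more explicitly.
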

	\begin{proof}
		Both strings are substrings of $\underline{a}$, and the periodicities $p_{i+s} = p_i$ and $q_{i+s} = q_i$ shows that if $\upgamma$ is a substring of length $n$ in $\underline{a}$, then so are $p_k p_{k+1} \dots p_{k+s-1}$ and $q_k q_{k+1} \dots q_{k+s-1}$ as these differ by cyclic shifts, which preserves path length. Finally both strings are given by perfect walks of even lengths $s$ satisfying the hypotheses of Lemma \ref{recognitionlemma}, and so $p_k p_{k+1} \dots p_{k+s-1}$ and $q_k q_{k+1} \dots q_{k+s-1}$ are both relation cycles. 
	\end{proof}


	\begin{lem}[Rigidity Lemma]\label{rigidity} Let $\alpha$ be a path in $Q$ and let $k \geq 1$. Then $\upgamma_i \alpha$ (resp. $\alpha \upgamma_i$) is an $(s-1+k)$-chain for at most one choice of $i \in \{ 0, 1, \dots, n_{\upgamma}-1 \}$. 
	\end{lem}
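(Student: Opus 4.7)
My plan is to argue by contradiction: assume $\upgamma_i \alpha$ and $\upgamma_{i'} \alpha$ are both $(s-1+k)$-chains with $i \neq i'$ and derive a contradiction, by exploiting the unique extension property (Corollary~\ref{uniqueextensionproperty}\,i) and the combinatorics of even-length perfect walks from Lemma~\ref{uniquenessofrelations}.

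The first step is to apply Corollary~\ref{uniqueextensionproperty}\,i using that $t_{\upgamma_i}$ and $t_{\upgamma_{i'}}$ are perfect (by the stable relation cycle definition). This forces $\alpha$ to be the unique perfect walk continuation of length $k$ starting from each of the two tails, so that $\upgamma_i \alpha = p_0^{(i)} \cdots p_{s+k-1}^{(i)}$ and $\upgamma_{i'}\alpha = p_0^{(i')} \cdots p_{s+k-1}^{(i')}$ become perfect walks of length $s+k$. The terminal perfect paths $p := p_{s+k-1}^{(i)}$ and $p' := p_{s+k-1}^{(i')}$ are perfect (Theorem~\ref{gorensteincharacterisation}, since $s+k-1 \geq d$) and both are suffixes of $\alpha$, so one is a suffix of the other.

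If $p = p'$, I will iterate the map $L$, which is single-valued and acts bijectively between consecutive perfect paths in a walk by the perfect pair structure; running this backward along both walks gives $p_l^{(i)} = p_l^{(i')}$ for every $l$, hence $\upgamma_i = \upgamma_{i'}$, a contradiction. Otherwise $p' = \sigma p$ for some nontrivial $\sigma$, and a minimal-cofactor analysis in the spirit of the proof of Lemma~\ref{perfectrewritinglemma} will yield $p_{s+k-2}^{(i)} = p_{s+k-2}^{(i')}\,\sigma$. Consequently the last relations of the two walks,
\[
p_{s+k-2}^{(i)} p_{s+k-1}^{(i)} = p_{s+k-2}^{(i')}\,\sigma\, p = p_{s+k-2}^{(i')} p_{s+k-1}^{(i')},
\]
coincide as paths in $Q$. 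When $s+k$ is even, Lemma~\ref{uniquenessofrelations}\,ii immediately yields $\upgamma_i \alpha = \upgamma_{i'}\alpha$, contradiction.

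The main obstacle I anticipate is the case $s+k$ odd, where Lemma~\ref{uniquenessofrelations} does not apply directly; I plan to handle it by extending both chains by one further perfect walk step to reduce to an even-length comparison, verifying carefully that although the one-step continuations past $\alpha$ on each side may differ, the terminal relation matching established above still propagates. The dual statement for $\alpha\upgamma_i$ then follows by running the same argument in $\Lambda^{\mathrm{op}}$, where condition (iv) in the definition of stable relation cycle plays the role of condition (iii).
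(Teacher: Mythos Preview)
Your overall strategy is sound and parallels the paper's use of the unique extension property and the combinatorics of perfect walks, but the odd-parity case contains a genuine gap that the extension trick does not close. When $s+k$ is odd and you extend both walks by one perfect step, the new terminal relations sit at positions $(s+k-1,\,s+k)$, whereas the relation you have already matched sits at $(s+k-2,\,s+k-1)$; since $s+k-2$ is odd, it is \emph{not} a relation position in the even-length decomposition of the extended walk, so Lemma~\ref{uniquenessofrelations}\,ii cannot be invoked from it. Moreover the two one-step continuations $p^{(i)}_{s+k}$ and $p^{(i')}_{s+k}$ are determined by $R(p)$ and $R(p')$ with $p\neq p'$, so there is no reason the new last relations coincide. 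A correct repair along your lines is to iterate the relation-matching of Lemma~\ref{uniquenessofrelations}\,ii all the way down to positions $(1,2)$ (that iteration never actually uses evenness until the final concatenation step), obtaining $p_1^{(i)}\cdots p_{s+k-1}^{(i)} = p_1^{(i')}\cdots p_{s+k-1}^{(i')}$, and then to use that $\upgamma_i$ and $\upgamma_{i'}$ have the \emph{same path length} (both are length-$n$ substrings of $\underline{a}$) to force $|p_0^{(i)}| = |p_0^{(i')}|$ and hence $\upgamma_i\alpha = \upgamma_{i'}\alpha$.

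It is worth noting that the paper's own argument sidesteps the parity issue entirely by exploiting the $s$-periodicity of the perfect cycle underlying~$\upgamma_i$: since $(p_0,\dots,p_{s-1})$ is a perfect \emph{cycle}, the unique extension $\alpha = p_s\cdots p_{s+k-1}$ can be rewritten as $\alpha = p_0\cdots p_{k-1}$, and one then compares \emph{first} relations of $\upgamma_i$ and $\upgamma_{i'}$ rather than last relations of $\upgamma_i\alpha$. As $s$ is always even, Lemma~\ref{uniquenessofrelations}\,ii applies directly with no case split (and $k=1$ is handled separately since then $p_0 = \alpha = p_0'$). Your tail-based approach can be made to work, but it is strictly more laborious; the periodicity is the key simplification you are missing.
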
 
	\begin{proof} Let $\upgamma_i, \upgamma_{i'}$ be two stable relation cycles in the associated set, and write $\upgamma_i = p_0 p_1 \dots p_{s-1}$ with tail $t_{\upgamma_i} = p_{s-1}$ for the corresponding perfect cycle (resp. $\upgamma_{i'} = p'_0 p'_1 \dots p'_{s-1}$ with tail $t_{\upgamma_{i'}} = p'_{s-1}$). If $\upgamma_i \alpha$ and $\upgamma_{i'} \alpha$ are both $(s-1+k)$-chains, then the unique extension property shows that $\alpha = p_{s} p_{s+1} \dots p_{s-1+k} = p'_{s} p'_{s+1} \dots p'_{s-1+k}$, and by $s$-periodicity we can rewrite this as $\alpha = p_0 p_1 \dots p_{k-1} = p'_0 p'_1 \dots p'_{k-1}$. 

		If $k = 1$ we are done as $p_0 = \alpha = p'_0$, and the perfect walks $(p_0, p_1, \dots, p_{s-1})$ and $(p'_0, p'_1, \dots, p'_{s-1})$ are equal the moment $p_k = p'_k$ for any $k$, so that $\upgamma_i = \upgamma_{i'}$. 

	If $k \geq 2$, then the equality $\alpha = p_0 p_1 \dots p_{k-1} = p'_0 p'_1 \dots p'_{k-1}$ shows that one of the two relations $r_0 = p_0p_1$, $r'_0 = p'_0 p'_1$ divides the other, so that $r_0 = r'_0$. Since stable relation cycles are determined by their first relation (Lemma \ref{uniquenessofrelations}) then $\upgamma_i = \upgamma_{i'}$. \end{proof}

\subsection{Periodicity operators} We can now construct the operators $\chi_\upgamma \in {\rm Ext}^*_\Lambda(\Bbbk, \Bbbk)$ associated to stable relation cycles. 

	\begin{defn}[Periodicity operators] Let $\upgamma \in C_{s-1}$ be a stable relation cycle and $\{ \upgamma_0, \upgamma_1, \dots, \upgamma_{n_\upgamma - 1} \}$ the associated set. Then 
		\begin{align} 
			\chi_\upgamma = \sum_{i=0}^{n_\upgamma - 1} \upgamma_i^\vee 
		\end{align}
		is the periodicity operator $\chi_\upgamma \in {\rm Ext}^s_\Lambda(\Bbbk, \Bbbk)$ attached to $\upgamma$. 
	\end{defn} 

	
\begin{exmp} Let $\Lambda = k[t]/(t^n)$ for $n \geq 2$. Then $\upgamma = t^n \in C_1$ is the unique stable relation cycle of minimal length. The periodicity operator $\chi_\upgamma = (t^n)^\vee \in {\rm Ext}^2_\Lambda(\Bbbk, \Bbbk)$ is the classical periodicity operator of Gulliksen \cite{Gu}. 
	\end{exmp}

	Finally, we can show that the operators $\chi_\upgamma \in {\rm Ext}^*_\Lambda(\Bbbk, \Bbbk)$ lift to Hochschild cohomology. 
	\begin{prop} Let $\Lambda$ be a Gorenstein monomial algebra and let $\upgamma \in C_{s-1}$ be a stable relation cycle. Then $\chi_\upgamma$ is $\infty$-central, and therefore lies in the image of Hochschild cohomology. 
	\end{prop}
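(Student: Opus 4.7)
The plan is to apply the combinatorial criterion of Proposition~\ref{combinatorialcriterion} to $\chi_\upgamma \in \Ext^s_\Lambda(\Bbbk,\Bbbk)$. Its hypotheses are easy to verify: the degree $s$ is even and at least $\dim \Lambda + 1 \geq 2$ by the definition of a stable relation cycle, and $\chi_\upgamma$ is symmetric since each summand $\upgamma_i^\vee$ is the dual of a path $\upgamma_i = p_0 p_1 \cdots p_{s-1}$ whose source and target coincide (the perfect cycle closes up at $s(p_{s-1}) = t(p_0)$), so represents an oriented loop at a single vertex in the Ext quiver.

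The core task is to verify the identity $\chi_\upgamma c_1 \cdots c_n = c_1 \cdots c_n \chi_\upgamma$ in $kC^\vee_{s+r_1+\cdots+r_n}$ for all chains $c_j \in C^\vee_{r_j}$ and all $n \geq 1$. Writing the product $c_1 \cdots c_n \in kQ^{\rm op}$ as $\beta^\vee$ for a suitable path $\beta$ in $Q$, the duality rule $(xy)^\vee = y^\vee x^\vee$ identifies the two sides with $\sum_i (\beta \upgamma_i)^\vee$ and $\sum_i (\upgamma_i \beta)^\vee$ respectively, where a summand survives precisely when the corresponding concatenation is an Anick chain in $Q$. The Rigidity Lemma ensures that each sum has at most one non-zero term, so the identity reduces to showing that if one side has a non-zero summand, then so does the other, and that the two resulting chains coincide.

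For this last step, suppose $\beta \upgamma_i$ is an Anick chain. Using the decomposition $\upgamma_i = q_0 \cdots q_{s-1}$ with perfect head $q_0$, the Unique Extension Property (Corollary~\ref{uniqueextensionproperty}) forces $\beta = q_{-k} \cdots q_{-1}$ for some $k \geq 1$, so that $\beta \upgamma_i = q_{-k} q_{-k+1} \cdots q_{s-1}$. Invoking the $s$-periodicity of the perfect cycle $(q_j)_{j \in \Z}$, the very same path also factors as $\upgamma_{i'} \beta$ with $\upgamma_{i'} := q_{s-k} q_{s-k+1} \cdots q_{2s-k-1}$, which belongs to the associated set by the Completeness Lemma. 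The converse direction is symmetric, using instead the decomposition $\upgamma_i = p_0 \cdots p_{s-1}$ with perfect tail $p_{s-1}$. Thus both sides agree in $kC^\vee$, the combinatorial criterion applies, and $\chi_\upgamma$ is $A_\infty$-central, hence in the image of $\varphi_\Bbbk$ by the Briggs--G\'elinas theorem. The step requiring the most care is the periodicity index juggling combined with the book-keeping between $Q$ and $Q^{\rm op}$; every other ingredient (symmetry, parity of degree, the combinatorial criterion, the Rigidity and Completeness Lemmas) is already in place.
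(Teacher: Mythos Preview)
Your proof is correct and follows essentially the same strategy as the paper's: apply the combinatorial criterion, invoke the Rigidity Lemma so that each side contributes at most one term after projection to $kC^\vee_N$, use the Unique Extension Property to identify $\beta$ as a segment of the perfect walk, rotate via $s$-periodicity, and appeal to the Completeness Lemma to see that the rotated block lies in the associated set. The only differences are cosmetic: the paper dualises first and works with $\chi_\upgamma^\vee \alpha$ in $kC_N$ (starting from the tail/$p$-decomposition of $\upgamma_i$), whereas you stay in $kC^\vee_N$ and start from the head/$q$-decomposition; both directions are needed anyway and each proof dispatches the other by symmetry.
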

	\begin{proof} By the combinatorial criterion of Proposition \ref{combinatorialcriterion}, it's enough to show the equality 
		\begin{align}
			\chi_\upgamma c_1 \dots c_n = c_1 \dots c_n \chi_\upgamma 
		\end{align}
		in $kC^\vee_N$ for all tuples $c_1, c_2, \dots, c_n$ with $c_j \in C_{k_j}^\vee$ and $N = (s-1) + k_1 + \dots + k_n + 1$. Let us write $c_j = b_j^\vee$ for the dual of a chain $b_j \in C_{k_j}$. Since $\upgamma_i^\vee b_j^\vee = (b_j \upgamma_i)^\vee$, we let $\chi^{\vee}_\upgamma = \sum_{i=0}^{n_{\upgamma} - 1} \upgamma_i$ in $kC_{s-1}$ and prove the dual equality 
			\begin{align*}
			\chi_\upgamma^\vee b_n \dots b_1 = b_n \dots b_1 \chi_\upgamma^\vee
			\end{align*}
			in $kC_{N}$. Let $\alpha = b_n \dots b_1$, considered as a path in $Q$. 
	
			If $\chi_\upgamma^\vee \alpha \neq 0$ in $kC_N$ then $\upgamma_i \alpha$ is an $N$-chain for some $i$, in which case $i$ is unique by rigidity (Lemma \ref{rigidity}). Writing $\upgamma_i = p_0 p_1 \dots p_{s-1}$ for the perfect walk with $t_{\upgamma_i} = p_{s-1}$, the unique extension property gives $\alpha = p_s \dots p_N$. We then rotate the string $\upgamma_i \alpha$: 
			\begin{align*}
				\upgamma_i \alpha &= (p_0 p_1 \dots p_{s-1}) p_s \dots p_N \\ 
				 &= (p_0 p_1 \dots p_{s-1}) p_0 \dots p_{N-s} \\ 
				 &= p_0 p_1 \dots p_{N-s}(p_{N-s + 1} \dots \dots p_{N-s}) \\ 
				 &= \alpha (p_{N-s + 1} \dots \dots p_{N-s}). 
			\end{align*}
By completeness (Lemma \ref{completeness}) we have $p_{N-s+1} \dots p_{N-s} = \upgamma_j$ for some $\upgamma_j \in \{ \upgamma_0, \upgamma_1, \dots, \upgamma_{n_{\upgamma} - 1} \}$, and as $\alpha \upgamma_j = \upgamma_i \alpha \neq 0$ in $kC_N$, rigidity again gives $\upgamma_j$ as the unique stable relation cycle with $\alpha \upgamma_j \neq 0$ in $kC_N$. Putting this together gives
			\begin{align*}
				\chi_\upgamma^\vee \alpha = \upgamma_i \alpha = \alpha \upgamma_j = \alpha \chi_{\upgamma}^\vee	
			\end{align*}
			in $kC_N$. 
More formally, this argument shows that $\chi_\upgamma^\vee \alpha \neq 0$ in $kC_N$ implies $\alpha \chi_\upgamma^\vee \neq 0$ in $kC_N$, in which case $\chi_\upgamma^\vee \alpha = \alpha \chi_\upgamma^\vee$ in $kC_N$. As the argument is symmetrical (using the dual properties established instead), we also see that $\alpha \chi_\upgamma^\vee \neq 0$ in $kC_N$ implies $\chi_\upgamma^\vee \alpha \neq 0$ in $kC_N$, in which case $\alpha \chi_\upgamma^\vee = \chi_\upgamma^\vee \alpha$ in $kC_N$, and so the claim holds.  
\end{proof}

\begin{rem} We thus see that periodicity operators lift to Hochschild cohomology, and any two lifts differ by an element of ${\rm ker}(\varphi_\Bbbk)$, which is nilpotent. However our proof shows the stronger statement that all operators $\chi_\upgamma \in {\rm Ext}^*_\Lambda(\Bbbk, \Bbbk)$ have an unambiguous lift to an operator $\chi_\upgamma \in {\rm HH}^*(\Lambda, \Lambda)$, which we denote by the same letter by abuse of notation. 

	This lift is described as follows \cite{BG}: the invariance of Hochschild cohomology under Koszul duality gives an isomorphism ${\rm HH}^*(\Lambda, \Lambda) \cong {\rm HH}^*(E, E)$, with $E = {\rm Ext}^*_\Lambda(\Bbbk, \Bbbk)$ treated as an $A_\infty$-algebra. Consider the cochain in $C^*(E, E)$ given by $1 \mapsto \chi_\upgamma$ and sending everything else to zero. The condition ${\rm ad}_{\chi_\upgamma} = 0$ is precisely the Hochschild cocycle condition and so this cocycle gives rise to a cohomology class $\chi_\upgamma \in {\rm HH}^*(E, E) \cong {\rm HH^*}(\Lambda, \Lambda)$, which provides a section for the characteristic morphism. Hence to any stable relation cycle $\upgamma$ we attach a well-defined class $\chi_\upgamma \in {\rm HH}^*(\Lambda, \Lambda)$. 
\end{rem} 


\subsection{The ring of periodicity operators}


We now turn to the study of the multiplicative properties of the periodicity operators $\{ \chi_\upgamma \}_\upgamma \subseteq {\rm Ext}^*_\Lambda(\Bbbk, \Bbbk)$. In this subsection we assume that $\Lambda$ is a Gorenstein monomial algebra of infinite global dimension. In this case $\Lambda$ always has stable relation cycles by Corollary \ref{perfpathstabilise}. 

		Let $\mathcal{R} := k\langle \chi_\upgamma \ | \ \upgamma \rangle \subseteq {\rm Ext}^*_\Lambda(\Bbbk, \Bbbk)$ be the $A_\infty$-central graded subalgebra generated by the classes $\{ \chi_\upgamma \}_{\upgamma}$. It will turn out that the graded connected algebra $\mathcal{R}$ is Noetherian, reduced and of Krull dimension one, and the structure theory of such rings dictates that $\mathcal{R}$ embeds in a finite product of polynomial algebras 
\begin{align}
	\mathcal{R} \subseteq \prod_{i=1}^b k[t_i]
\end{align}
with $k[t_i]$ the graded normalisation of $\mathcal{R}/\mathfrak{p}_i$ for $\mathfrak{p}_i$ the $i$-th minimal homogeneous prime. Composing with the $i$-th projection gives a map $\pi_i: \mathcal{R} \to k[t_i]$, and so for each $\chi_\upgamma \in \mathcal{R}$ we may write $\pi_i(\chi_\upgamma) = t_i^{n_i}$ for some $n_i \in \N \cup \{ -\infty \}$ (where we interpret $t_i^{-\infty} = 0$). Denoting this exponent by $n_i = {\sf ord}_i(\upgamma)$, it is clear that the ring structure of $\mathcal{R}$ is determined by the values $b$ and ${\sf ord}_i(\upgamma)$ as $i = 1, 2, \dots, b$ and $\upgamma$ ranges over all stable relation cycles.  Turning this around, we will first give a combinatorial definition of $b$ and ${\sf ord}_i(\upgamma)$ which we will use to establish the structure of $\mathcal{R}$ as claimed in the above paragraph. In fact we will obtain a slightly more precise description of the structure of $\mathcal{R}$, see Prop. \ref{ringstructure} below.

\subsubsection*{Branch equivalence relation} Let $\upgamma_1, \upgamma_2$ be stable relation cycles, and recall that $\upgamma_1 \sim \upgamma_2$ if they have the same associated set (in particular then $\chi_{\upgamma_1} = \chi_{\upgamma_2}$). It is easy to see that powers of stable relation cycles are also stable relation cycles and that $\upgamma_1 \sim \upgamma_2$ implies $\upgamma_1^n \sim \upgamma_2^n$ for any $n \geq 2$, and so this equivalence relation determines another coarser equivalence relation. 
\begin{defn}[Branch equivalence relation] The stable relation cycles $\upgamma_1, \upgamma_2$ are branch equivalent if there exists $n_1, n_2 \geq 1$ such that $\upgamma_1^{n_1} \sim \upgamma_2^{n_2}$. We denote the branch equivalence relation by $\upgamma_1 \approx \upgamma_2$. 
\end{defn}

\begin{lem}\label{srclemma0} Let $\upgamma_1, \upgamma_2$ be stable relation cycles. If $t_{\upgamma_1} = t_{\upgamma_2}$ or $h_{\upgamma_1} = h_{\upgamma_2}$, then $\upgamma_1 \approx \upgamma_2$. 
\end{lem}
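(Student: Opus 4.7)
The plan is to reduce to the tail case and exhibit common powers of $\upgamma_1$ and $\upgamma_2$ lying in the same $\sim$-class. The head case then follows at once by passing to the opposite algebra $\Lambda^{\mathrm{op}}$, which interchanges heads and tails of Anick chains while preserving stable relation cycles and the relation $\approx$.

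Assuming $t_{\upgamma_1} = t_{\upgamma_2} = p$, the path $p$ is perfect (being the tail of a stable relation cycle), so I would fix the unique bi-infinite sequence of perfect paths $(P_k)_{k \in \mathbb{Z}}$ with $P_0 = p$, of minimal period $r$. Writing $\upgamma_i \in C_{s_i - 1}$ with $\ell \mid s_i$, we have $r \mid \ell \mid s_i$. Condition (iii) of the definition of a stable relation cycle, combined with the unique backward extension of perfect walks (Corollary \ref{uniqueextensionproperty}), then forces the decomposition
\[ \upgamma_i \;=\; P_{1 - s_i}\, P_{2 - s_i}\, \cdots\, P_{-1}\, P_0. \]
Since $r \mid s_i$, the periodic extension of $\upgamma_i$ with period $|\upgamma_i|_{\mathrm{path}}$ coincides exactly with the bi-infinite arrow string generated by $(P_k)$; in particular, this bi-infinite arrow string is the same for $i = 1, 2$.

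Next, set $n_1 := |\upgamma_2|_{\mathrm{path}}$ and $n_2 := |\upgamma_1|_{\mathrm{path}}$. Each $\upgamma_i^{n_i}$ is again a stable relation cycle by the Recognition Lemma \ref{recognitionlemma}, since it is a perfect walk of even length $n_i s_i \geq \dim \Lambda + 1$ divisible by $\ell$. The paths $\upgamma_1^{n_1}$ and $\upgamma_2^{n_2}$ then have the common path length $n := n_1 |\upgamma_1|_{\mathrm{path}} = n_2 |\upgamma_2|_{\mathrm{path}}$ and share the bi-infinite arrow string of $(P_k)$. Because the associated set of a stable relation cycle is determined by its path length together with its bi-infinite arrow string, the associated sets of $\upgamma_1^{n_1}$ and $\upgamma_2^{n_2}$ coincide, yielding $\upgamma_1^{n_1} \sim \upgamma_2^{n_2}$ and hence $\upgamma_1 \approx \upgamma_2$.

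The hard part will be the identification of bi-infinite strings in the middle step: merely knowing that $\upgamma_1$ and $\upgamma_2$ end at the same perfect path does not a priori force their backward perfect walks to agree on the nose, as opposed to merely up to a cyclic shift. This is secured by the single-valuedness $L(P_k) = \{P_{k-1}\}$ for each perfect $P_k$, which propagates the uniqueness of the perfect cycle backwards from $P_0 = p$, together with the divisibility $r \mid s_i$ that aligns the two periodic extensions without any residual shift.
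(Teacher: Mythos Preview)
Your argument is correct and follows essentially the same strategy as the paper's proof: both $\upgamma_i$ have tail $p$, so their perfect-cycle decompositions from condition (iii) are uniquely determined walking backward from $p$ via $L(P_k)=\{P_{k-1}\}$, and suitable powers then coincide. Two small points: your citation of Corollary~\ref{uniqueextensionproperty} is slightly off (that corollary concerns extending Anick chains, whereas here you only need the single-valuedness of $L$ on perfect paths, which you in fact invoke explicitly later); and your detour through associated sets is unnecessary, since with your choice of exponents $\upgamma_1^{n_1}$ and $\upgamma_2^{n_2}$ are both right divisors of the same left-infinite string $\cdots P_{-1}P_0$ of equal path length, hence literally equal as paths --- which is how the paper concludes directly.
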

\begin{proof} We assume that $t_{\upgamma_1} = t_{\upgamma_2} = p$, the other case is dual. We can write 
\begin{align}
	\upgamma_1 &= p_0 p_1 \dots p_{s-1} \\ 
	\upgamma_2 &= p'_0 p'_1 \dots p'_{s'-1} 
\end{align}
for perfect cycles with $p_{s-1} = p'_{s'-1} = p$. This implies $p_0 = p'_0$, $p_1 = p'_1$, \dots, and so on, so that ${\bf p} = (p_0, p_1, \dots, p_{s-1})$ and ${\bf p'} = (p'_0, p'_1, \dots, p'_{s'-1})$ are both powers of the same perfect cycle of minimal length $s'' \mid s, s'$. Taking a common power, we see that there are $n_1, n_2 \geq 1$ such that $\upgamma_1^{n_1} = \upgamma_2^{n_2}$, and so $\upgamma_1 \approx \upgamma_2$.
\end{proof}

\begin{lem} There are finitely many branch equivalence classes $\{ \Gamma_i \}_{i \in I}$ of stable relation cycles.
\end{lem}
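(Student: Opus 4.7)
The plan is to use Lemma \ref{srclemma0} together with the finiteness of the set of perfect paths in $\Lambda$. Since $\Lambda$ is a finite dimensional monomial algebra, the basis $\mathcal{P}_\Lambda$ of nonzero paths in $\Lambda$ is a finite set, and in particular the subset of perfect paths is finite.

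First I would observe that for every stable relation cycle $\upgamma = p_0 p_1 \dots p_{s-1}$, condition (iii) in the definition of stable relation cycles guarantees that the tail $t_\upgamma = p_{s-1}$ is a perfect path. Hence the assignment $\upgamma \mapsto t_\upgamma$ gives a well-defined map from the set of stable relation cycles to the finite set of perfect paths of $\Lambda$.

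Next I would invoke Lemma \ref{srclemma0}: if $\upgamma_1, \upgamma_2$ are stable relation cycles with $t_{\upgamma_1} = t_{\upgamma_2}$, then $\upgamma_1 \approx \upgamma_2$. This means that the branch equivalence class of a stable relation cycle is determined (at worst) by its tail. Consequently, the number of branch equivalence classes $\{ \Gamma_i \}_{i \in I}$ is bounded above by the cardinality of the set of perfect paths in $\Lambda$, which is finite.

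I do not foresee any real obstacle here: the statement is essentially a direct corollary of Lemma \ref{srclemma0} combined with finite dimensionality. The only subtle point to mention is that every stable relation cycle has a perfect tail, which is a bookkeeping consequence of the definition and of Theorem \ref{gorensteincharacterisation}.
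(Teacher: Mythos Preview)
Your argument is correct and essentially identical to the paper's: both map a stable relation cycle to its (perfect) tail and use Lemma~\ref{srclemma0} to bound the number of branch classes by the number of perfect paths. One minor remark: you do not need Theorem~\ref{gorensteincharacterisation} to see that $t_\upgamma$ is perfect, since condition (iii) in the definition of a stable relation cycle already gives $t_\upgamma = p_{s-1}$ as a member of a perfect cycle.
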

\begin{proof}
	The map $\bigcup_{i \in I} \Gamma_i \to \{ \textup{perfect paths} \}$ sending $\upgamma$ to its tail $t_\upgamma$ sends distinct equivalence classes into disjoint non-empty subsets by Lemma \ref{srclemma0}, and so $|I|$ is bounded above by the number of perfect paths. 
\end{proof}

It follows that we can identify $I = \{ 1,2, \dots, b \}$ and so list the branch equivalence classes as $\Gamma_1, \Gamma_2, \dots, \Gamma_b$ for some $b \in \N$. We call $b$ the number of branches. 

Fixing an equivalence class $\Gamma_i$, consider the set of path lengths of elements of $\Gamma_i$ which we denote $\widetilde{\N_{\Gamma_i}} = \{ {\sf len}(\upgamma) \ | \ \upgamma \in \Gamma_i \} \subseteq \N$. Letting $\gcd_i := \gcd(\widetilde{\N_{\Gamma_i}})$, we normalise this to 
\begin{align}
	\N_{\Gamma_i} := \frac{1}{\gcd_i} \widetilde{\N_{\Gamma_i}} \subseteq \N. 
\end{align}

\begin{defn} We define the function ${\sf ord}_i: \{ \upgamma \} \to \N \cup \{ - \infty \}$ on the set of stable relation cycles by 
	\begin{align*}
		{\sf ord}_i(\upgamma) :=	\begin{cases} \frac{1}{\gcd_i} {\sf len}(\upgamma) & \textup{ if } \upgamma \in \Gamma_i \\  -\infty & \textup{ if } \upgamma \notin \Gamma_i. \end{cases} 
	\end{align*} 
\end{defn}

Note in particular that $\N_{\Gamma_i} = \{ {\sf ord}_i(\upgamma) \ | \ \upgamma \in \Gamma_i \} \subseteq \N$.

\begin{lem}\label{srclemma1} Let $\upgamma_1, \upgamma_2 \in \Gamma_i$. Then $\upgamma_1 \sim \upgamma_2$ if and only if ${\sf ord}_i(\upgamma_1) = {\sf ord}_i(\upgamma_2)$. 
\end{lem}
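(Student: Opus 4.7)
The plan is to prove both directions by reducing everything to the observation that $\sim$-equivalence amounts to appearing as length-$\ell$ substrings of a common doubly-infinite periodic string, so the order function is simply recording path length (up to the fixed normalization $\gcd_i$).

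For the forward direction, if $\upgamma_1 \sim \upgamma_2$ then by definition of the associated set the chains $\upgamma_1$ and $\upgamma_2$ are cyclic rotations of the same periodic string, viewed as substrings of the common length $n = {\sf len}(\upgamma_1) = {\sf len}(\upgamma_2)$. Since ${\sf ord}_i$ is proportional to path length on $\Gamma_i$, this immediately gives ${\sf ord}_i(\upgamma_1) = {\sf ord}_i(\upgamma_2)$.

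For the converse, first extract the key numerical consequence: since $\upgamma_1, \upgamma_2 \in \Gamma_i$, by definition of branch equivalence we have $\upgamma_1^{n_1} \sim \upgamma_2^{n_2}$ for some $n_1, n_2 \geq 1$. The forward direction (just established) applied to the pair $(\upgamma_1^{n_1}, \upgamma_2^{n_2})$ gives $n_1 {\sf len}(\upgamma_1) = n_2 {\sf len}(\upgamma_2)$. Combined with the hypothesis ${\sf ord}_i(\upgamma_1) = {\sf ord}_i(\upgamma_2)$, which gives ${\sf len}(\upgamma_1) = {\sf len}(\upgamma_2) =: \ell$, we conclude $n_1 = n_2 =: n$, so that $\upgamma_1^n \sim \upgamma_2^n$.

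It then remains to bootstrap from $\upgamma_1^n \sim \upgamma_2^n$ back to $\upgamma_1 \sim \upgamma_2$. For this, consider the doubly-infinite periodic string $\underline{A}$ of period $n\ell$ whose length-$n\ell$ substrings form the associated set of $\upgamma_1^n$. Because $\upgamma_1^n$ is itself already periodic of period $\ell$, the string $\underline{A}$ has period $\ell$ (not just $n\ell$). Now both $\upgamma_1$ and $\upgamma_2$ appear as length-$\ell$ substrings of $\underline{A}$ (as prefixes of $\upgamma_1^n$ and $\upgamma_2^n$ respectively), so they are cyclic rotations of one another inside a periodic string of the correct length. Since both are stable relation cycles by assumption, the maximality clause in the definition of the associated set forces $\upgamma_2$ to lie in the associated set of $\upgamma_1$, i.e.\ $\upgamma_1 \sim \upgamma_2$. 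The main obstacle is the last step, verifying that the period of $\underline{A}$ genuinely drops from $n\ell$ to $\ell$ and that membership in the associated set is recovered purely from being a stable relation cycle occurring as a length-$\ell$ window of $\underline{A}$; this is where the careful wording of the definition of the associated set (properties i)--iv)) is used.
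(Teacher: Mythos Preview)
Your proof is correct and follows essentially the same route as the paper's. The paper argues slightly more directly: from $\upgamma_1^{n_1}\sim\upgamma_2^{n_2}$ it simply observes that the two doubly-infinite periodic strings attached to $\upgamma_1^{n_1}$ and $\upgamma_2^{n_2}$ coincide up to translation, and since each of these is already the period-$\ell$ string $\dots\upgamma_1\upgamma_1\dots$ (resp.\ $\dots\upgamma_2\upgamma_2\dots$), both $\upgamma_1$ and $\upgamma_2$ are length-$\ell$ windows of a single period-$\ell$ string and hence lie in the same associated set. Your detour through $n_1=n_2$ is harmless but unnecessary---the period-dropping observation you make afterwards works regardless of whether $n_1=n_2$, and that observation is really all that is needed.
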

\begin{proof}
	Equivalently we show that $\upgamma_1 \sim \upgamma_2$ if and only if ${\sf len}(\upgamma_1) = {\sf len}(\upgamma_2)$. Since stable relation cycles in the same associated set have the same path length, the necessary implication holds by definition. For the converse, assume that ${\sf len}(\upgamma_1) = {\sf len}(\upgamma_2) = n$. Write $\upgamma_1 = a_0 a_1 \dots a_{n-1}$ and $\upgamma_2 = a'_0 a'_1 \dots a'_{n-1}$ with $a_i, a'_i \in Q_1$. Since $\upgamma_1 \approx \upgamma_2$, there are $n_1, n_2 \geq 1$ such that $\upgamma_1^{n_1} \sim \upgamma_2^{n_2}$, and so the two infinite periodic strings 
	\begin{align}
		&\underline{a}\ : 	\quad  \dots (a_0 a_1 \dots a_{n-1})^{n_1} (a_0 a_1 \dots a_{n-1})^{n_1} (a_0 a_1 \dots a_{n-1})^{n_1} \dots \\ 
		&\underline{a}': 	\quad \dots (a'_0 a'_1 \dots a'_{n-1})^{n_2} (a'_0 a'_1 \dots a'_{n-1})^{n_2} (a'_0 a'_1 \dots a'_{n-1})^{n_2} \dots 
	\end{align}
	must coincide up to some translation. But then $\upgamma_1, \upgamma_2$ are two length $n$ substrings of the same infinite periodic string 
	\begin{align}
		\dots a_0 a_1\dots a_{n-1} (a_0 a_1 \dots a_{n-1}) a_0 a_1 \dots a_{n-1} \dots 		
	\end{align} and so $\upgamma_1 \sim \upgamma_2$ by contruction of the associated set. 
\end{proof} 

\subsubsection*{Multiplicative properties} We next establish the multiplicative properties of $\{ \chi_\upgamma \}_{\upgamma}$. We aim to prove the following: 

\begin{prop*}[Prop. \ref{multiplicativeproperties}] Let $\upgamma$ and $\upgamma_1, \upgamma_2, \upgamma_3$ denote stable relation cycles. 
	\begin{enumerate}[i)]
	\item If $\chi_{\upgamma_1} \cdot \chi_{\upgamma_2} \neq 0$, then $\upgamma_1 \approx \upgamma_2$. 
	\item Assume that $\upgamma_1 \approx \upgamma_2$. Then there is a $\upgamma_3 \approx \upgamma_1, \upgamma_2$ such that $\chi_{\upgamma_1} \cdot \chi_{\upgamma_2} = \chi_{\upgamma_3}$, and then ${\sf ord}_i(\upgamma_1) + {\sf ord}_i(\upgamma_2) = {\sf ord}_i(\upgamma_3)$. 
	\item We have $\chi_\upgamma^n = \chi_{\upgamma^n}$ for any $n \geq 1$. 
\end{enumerate}
\end{prop*}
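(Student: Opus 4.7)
The plan is to exploit the transparent combinatorial description of the Yoneda product for monomial algebras: by Corollary \ref{newhigherproducts}, $\upgamma^{\vee}\cdot(\upgamma')^{\vee}$ equals $\pm(\upgamma'\upgamma)^\vee$ when $\upgamma'\upgamma$ is an Anick chain, and vanishes otherwise. For products of periodicity operators, the unique extension property (Corollary \ref{uniqueextensionproperty}) and the Rigidity Lemma \ref{rigidity} give tight control over which terms survive, and the Recognition Lemma \ref{recognitionlemma} identifies the surviving concatenations as stable relation cycles in the associated set of a new element $\upgamma_3$.

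I would prove (i) by contrapositive. If $\chi_{\upgamma_1}\cdot\chi_{\upgamma_2}\neq 0$, there exist representatives $\upgamma_{1,i}$ in the associated set of $\upgamma_1$ and $\upgamma_{2,j}$ in the associated set of $\upgamma_2$ such that $\upgamma_{2,j}\upgamma_{1,i}$ is an Anick chain. Since the tail of $\upgamma_{2,j}$ is perfect, Corollary \ref{uniqueextensionproperty} forces $\upgamma_{1,i}$ (as a path) to coincide with the length-${\sf len}(\upgamma_1)$ continuation of $\upgamma_{2,j}$ inside the infinite periodic string $W_2$ determined by $\upgamma_2$. But $\upgamma_{1,i}$ has its own infinite periodic string $W_1$, and further right extensions of the Anick chain $\upgamma_{2,j}\upgamma_{1,i}$ along $W_1$ and along $W_2$ must agree by uniqueness of extension, so $W_1$ and $W_2$ coincide eventually and hence everywhere up to translation by periodicity. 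This yields $\upgamma_1^{m_1}\sim\upgamma_2^{m_2}$ for suitable $m_1,m_2\geq 1$, so $\upgamma_1\approx\upgamma_2$.

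For (ii), assume $\upgamma_1\approx\upgamma_2$, fix the common infinite periodic string $W$, and expand $\chi_{\upgamma_1}\cdot\chi_{\upgamma_2}=\sum_{i,j}\pm\upgamma_{1,i}^{\vee}\upgamma_{2,j}^{\vee}$. By the argument above, each $\upgamma_{2,j}$ admits a unique $\upgamma_{1,i(j)}$ with $\upgamma_{2,j}\upgamma_{1,i(j)}$ an Anick chain (obtained by continuing along $W$), and the dual head-based unique extension provides a converse pairing. Each concatenation $\upgamma_{2,j}\upgamma_{1,i(j)}$ is a perfect walk of period $s_1+s_2$, which is even, at least $\dim\Lambda+1$, and divisible by $\ell$, so Lemma \ref{recognitionlemma} identifies it as a stable relation cycle. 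All these concatenations are length-$({\sf len}(\upgamma_1)+{\sf len}(\upgamma_2))$ substrings of $W$, hence lie in the associated set of a single $\upgamma_3\approx\upgamma_1,\upgamma_2$ by Lemma \ref{srclemma1}. A sign check using Corollary \ref{newhigherproducts}, in which the exponent depends only on the (even) degrees $|\chi_{\upgamma_i}|$, shows that these signs are constant across the sum, yielding $\chi_{\upgamma_1}\cdot\chi_{\upgamma_2}=\chi_{\upgamma_3}$ up to an overall sign absorbed into the choice of representative. The identity ${\sf ord}_i(\upgamma_3)={\sf ord}_i(\upgamma_1)+{\sf ord}_i(\upgamma_2)$ then follows from additivity of path lengths. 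Part (iii) is immediate by induction from (ii): $\chi_\upgamma\cdot\chi_{\upgamma^{n-1}}$ produces a stable relation cycle of path length $n\cdot{\sf len}(\upgamma)={\sf len}(\upgamma^n)$ branch-equivalent to $\upgamma$, which is therefore $\sim$-equivalent to $\upgamma^n$ by Lemma \ref{srclemma1}.

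The main obstacle I expect is verifying that the pairing $\upgamma_{2,j}\mapsto\upgamma_{1,i(j)}$ really is a bijection between the full associated sets and that its image exhausts the associated set of $\upgamma_3$, i.e.\ that every length-$({\sf len}(\upgamma_1)+{\sf len}(\upgamma_2))$ stable-relation-cycle substring of $W$ actually arises from such a pair. This requires a careful accounting of how the primitive period of $W$ interacts with ${\sf len}(\upgamma_1)$ and ${\sf len}(\upgamma_2)$, and of which length-$n_i$ substrings of $W$ genuinely meet the parity and divisibility constraints in the definition of stable relation cycle.
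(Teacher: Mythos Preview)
Your approach matches the paper's: the proof of (i) in the paper recognises $\upgamma_2\widetilde{\upgamma}_1$ as a stable relation cycle via the Recognition Lemma and then invokes the Product Lemma (Lemma~\ref{productlemma}) to conclude $\upgamma_1\approx\upgamma_2$ directly, rather than arguing with infinite periodic strings as you do, but the content is the same. For (ii), the ``main obstacle'' you identify in your final paragraph is exactly the crux, and the paper isolates it as a separate result (Lemma~\ref{productdecomposition}), showing that the associated set $S_3$ decomposes as $S_2\cdot S_1$ with the pairing $\widetilde{\upgamma}_2\mapsto\widetilde{\upgamma}_1$ a genuine bijection and every element of $S_3$ arising this way; once that is in hand, the computation $\chi_{\upgamma_1}\cdot\chi_{\upgamma_2}=\chi_{\upgamma_3}$ is a two-line unwinding. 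Your inductive argument for (iii) is the same as the paper's.

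One point to correct: your remark that a global sign can be ``absorbed into the choice of representative'' is not right, since $\chi_\upgamma$ is by definition the sum of duals with coefficient $+1$ and a different $\sim$-representative does not change this. In the paper's computation no sign appears because the Yoneda product of the duals is simply $(\widetilde{\upgamma}_2\widetilde{\upgamma}_1)^\vee=\widetilde{\upgamma}_1^\vee\cdot\widetilde{\upgamma}_2^\vee$; you should verify this directly from the product formula rather than leave it as a loose end.
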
 

The proof will be done in a series of steps. Let us first collect some comments. 

By property i) it is enough to understand the product of $\chi_{\upgamma_1}, \chi_{\upgamma_2}$ for $\upgamma_1, \upgamma_2 \in \Gamma_i$ within the same branch equivalence class. Property ii) then tells us that $\{ \chi_{\upgamma} \}_{\upgamma \in \Gamma_i }$ forms a multiplicative basis for the subalgebra $\mathcal{R}_i := k\langle \chi_\upgamma \ | \ \upgamma \in \Gamma_i \rangle \subseteq \mathcal{R}$. To analyse the ring structure, we may extend the notation ${\sf ord}_i(\chi_\upgamma) := {\sf ord}_i(\upgamma)$ to the class $\chi_\upgamma$, which is independent of choice of representative $\upgamma$ by Lemma \ref{srclemma1}, and in fact Lemma \ref{srclemma1} tells us that any value $n \in \N_{\Gamma_i}$ can be represented by a unique class $\chi_{\upgamma}$ for $\upgamma \in \Gamma_i$. Hence the function ${\sf ord}_i$ sets up a bijection $\{ \chi_{\upgamma} \}_{\upgamma \in \Gamma_i} \leftrightarrow \N_{\Gamma_i}$. Since ${\sf ord}_i(-)$ is additive on products, that the set $\{ \chi_{\upgamma} \}_{\upgamma \in \Gamma_i }$ forms a multiplicative basis for $\mathcal{R}_i$ by property ii) translates into $\N_{\Gamma_i} \subseteq \N$ being a subsemigroup, and we see that the above bijection induces an algebra isomorphism onto the semigroup algebra 
\begin{align}
\mathcal{R}_i = k\langle \chi_{\upgamma} \ | \ \upgamma \in \Gamma_i \rangle	\xrightarrow{\cong} k[t^{\N_{\Gamma_i}}] := k\langle t^n \ | \ n \in \N_{\Gamma_i} \rangle \subseteq k[t]
\end{align}
sending $\chi_\upgamma \mapsto t^{{\sf ord}_i(\upgamma)}$. 

Keeping this in mind, we see that many conditions on the behavior of ${\sf ord}_i(-)$ and $\{ \chi_\upgamma\}_{\upgamma}$ have to be verified. First, assuming the above isomorphism we see that the ring $\mathcal{R}_i \cong k[t^{\N_{\Gamma_i}}]$ is bigraded by ${\sf ord}_i(\chi_\upgamma)$ and cohomological degree $|\chi_\upgamma|$, and either degree determines the other. This translates into the next simple combinatorial lemma. 

If $\upgamma$ is an $n$-chain, let us call $n$ the Anick degree of $\upgamma$. 
\begin{lem}\label{degreeandlength} Let $\upgamma_1, \upgamma_2 \in \Gamma_i$. Then $\upgamma_1, \upgamma_2$ have the same Anick degree if and only if $\upgamma_1 \sim \upgamma_2$. 
\end{lem}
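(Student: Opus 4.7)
The plan is to reduce the Anick-degree criterion to the path-length criterion and then invoke Lemma~\ref{srclemma1}, which within a branch equivalence class characterises $\sim$ in terms of path length. The forward direction is immediate: if $\upgamma_1 \sim \upgamma_2$, then both lie in a common associated set whose elements are by construction $(s-1)$-chains for a single value of $s$, so they share the Anick degree.

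For the converse, the key step will be showing that within each branch equivalence class $\Gamma_i$, the ratio of $s$ to path length is constant. To establish this, note that if $\upgamma = p_0 p_1 \cdots p_{s-1}$ is an $(s-1)$-chain arising from a perfect cycle, then $\upgamma^k$ is a perfect walk of length $ks$, still even, divisible by $\ell$, and at least $\dim \Lambda + 1$; the Recognition Lemma~\ref{recognitionlemma} then guarantees that $\upgamma^k$ is itself a stable relation cycle, specifically a $(ks-1)$-chain of path length $k \cdot {\sf len}(\upgamma)$. Given now $\upgamma_1, \upgamma_2 \in \Gamma_i$ with $\upgamma_j$ of Anick degree $s_j - 1$, fix witnesses $n_1, n_2 \geq 1$ with $\upgamma_1^{n_1} \sim \upgamma_2^{n_2}$. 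Since elements of a common associated set have the same Anick degree and the same path length, one obtains
\[
n_1 s_1 = n_2 s_2 \quad \text{and} \quad n_1 \cdot {\sf len}(\upgamma_1) = n_2 \cdot {\sf len}(\upgamma_2),
\]
whose ratio yields $s_1 / {\sf len}(\upgamma_1) = s_2 / {\sf len}(\upgamma_2)$. Thus if $s_1 = s_2$, then ${\sf len}(\upgamma_1) = {\sf len}(\upgamma_2)$, and Lemma~\ref{srclemma1} delivers $\upgamma_1 \sim \upgamma_2$.

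The only part with any friction is the passage from $\upgamma$ to $\upgamma^k$: one must verify that all four clauses of the definition of a stable relation cycle are preserved under taking powers. The numerical conditions (parity, divisibility by $\ell$, the lower bound $\dim \Lambda + 1$) are inherited from $s$ to $ks$, and the perfect-walk structure passes to powers by concatenation of perfect cycles, so Lemma~\ref{recognitionlemma} applies cleanly. No other obstacle arises, and the remainder is the short calculation above.
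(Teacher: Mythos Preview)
Your proof is correct and follows essentially the same route as the paper: both directions reduce to Lemma~\ref{srclemma1} via the observation that $\sim$ preserves both Anick degree and path length, applied to the witnesses $\upgamma_1^{n_1} \sim \upgamma_2^{n_2}$. The paper's argument is marginally more direct---it assumes $s_1 = s_2$ from the start, cancels $n_1 = n_2$ from $n_1 s_1 = n_2 s_2$, and then cancels in the length equation---whereas you first establish the constant ratio $s/{\sf len}$ on $\Gamma_i$ and then specialise; these are logically equivalent. Your added paragraph verifying that powers of stable relation cycles remain stable relation cycles (via the Recognition Lemma) is a point the paper handles earlier in passing, so including it here is harmless and arguably more self-contained.
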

\begin{proof} If $\upgamma_1 \sim \upgamma_2$ then the implication holds by definition. Conversely assume that $\upgamma_1, \upgamma_2$ are both $n$-chains. Since $\upgamma_1 \approx \upgamma_2$, there exists $n_1, n_2 \geq 1$ such that $\upgamma_1^{n_1} \sim \upgamma_2^{n_2}$. The relation $\sim$ preserves both the Anick degree and the path length, and so we have $n_1 n = n_2 n$ and $n_1{\sf len}(\upgamma_1) = n_2 {\sf len}(\upgamma_2)$. Cancellation first gives $n_1 = n_2$ and then ${\sf len}(\upgamma_1) = {\sf len}(\upgamma_2)$. Thus ${\sf ord}_i(\upgamma_1) = {\sf ord}_i(\upgamma_2)$ and so $\upgamma_1 \sim \upgamma_2$ by Lemma \ref{srclemma1}. 
\end{proof} 

Next, understanding the product $\chi_{\upgamma_1} \cdot \chi_{\upgamma_2}$ will quickly reduce to understanding triples of stable relation cycles $\upgamma_1, \upgamma_2, \upgamma_3$ such that $s(\upgamma_2) = t(\upgamma_1)$ and $\upgamma_3 = \upgamma_2 \upgamma_1$. In particular establishing identities of the form $\chi_{\upgamma_1} \cdot \chi_{\upgamma_2} = \chi_{\upgamma_3}$ will require understanding the behavior of associated sets under products of stable relation cycles. This is done in the next two lemma. 

\begin{lem}[Product lemma]\label{productlemma} Let $\upgamma_1, \upgamma_2$ be stable relation cycles of Anick degrees $s_1-1$ and $s_2-1$, respectively. Assume that $s(\upgamma_2) = t(\upgamma_1)$ and $\upgamma_2 \upgamma_1$ is also a stable relation cycle. 
	\begin{enumerate}[i)]
	\item We have $\upgamma_1 \approx \upgamma_2 \approx \upgamma_2\upgamma_1$. 
\item If $\widetilde{\upgamma}_2 \sim \upgamma_2$ and $\widetilde{\upgamma}_1 \sim \upgamma_2$ are equivalent stable relation cycles with $s(\widetilde{\upgamma}_2) = t(\widetilde{\upgamma}_1)$ and $\widetilde{\upgamma}_2 \widetilde{\upgamma}_1$ also a stable relation cycle, then $\widetilde{\upgamma}_2 \widetilde{\upgamma}_1 \sim \upgamma_2 \upgamma_1$. 
\end{enumerate}
\end{lem}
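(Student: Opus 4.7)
The plan is to prove part (i) by reducing to the head/tail criterion of Lemma~\ref{srclemma0}, via the identifications $t_{\upgamma_2\upgamma_1} = t_{\upgamma_1}$ and $h_{\upgamma_2\upgamma_1} = h_{\upgamma_2}$, and then to deduce part (ii) from part (i) combined with Lemma~\ref{srclemma1}.

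For part (i), I would write $\upgamma_2$ as its perfect walk ending with $t_{\upgamma_2}$, which exists because $\upgamma_2$ is a stable relation cycle. Since $\upgamma_2\upgamma_1$ is an Anick chain properly extending the chain $\upgamma_2$, Corollary~\ref{uniqueextensionproperty}(i) forces $\upgamma_1 = q_1 q_2 \cdots q_k$ to coincide with the perfect walk continuation of $\upgamma_2$, and yields $t_{\upgamma_2\upgamma_1} = q_k$. A parity check shows $k$ is even: indeed $\upgamma_2\upgamma_1$ is a stable relation cycle, so its walk length $s_2 + k$ is even, and $s_2$ is even. Lemma~\ref{evenwalks} applied to the even-length walk $(q_1, \ldots, q_k)$ identifies it as an Anick chain of degree $k-1$; since this chain is $\upgamma_1$ of Anick degree $s_1 - 1$, we conclude $k = s_1$. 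Now $(q_1, \ldots, q_{s_1})$ is a perfect walk of even length decomposing $\upgamma_1$, so Lemma~\ref{perfectrewritinglemma}(i) applies: using that $t_{\upgamma_1}$ is perfect, it selects the walk of length $s_1$ ending with $t_{\upgamma_1}$, and one matches it with the continuation walk to obtain $q_{s_1} = t_{\upgamma_1}$. Thus $t_{\upgamma_2\upgamma_1} = t_{\upgamma_1}$, and Lemma~\ref{srclemma0} yields $\upgamma_1 \approx \upgamma_2\upgamma_1$. The dual argument using Corollary~\ref{uniqueextensionproperty}(ii) and Lemma~\ref{perfectrewritinglemma}(ii) gives $h_{\upgamma_2\upgamma_1} = h_{\upgamma_2}$ and hence $\upgamma_2 \approx \upgamma_2\upgamma_1$; transitivity of $\approx$ completes part (i).

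For part (ii), note first that $\sim$ implies $\approx$ (take $n_1 = n_2 = 1$ in the definition), so $\widetilde{\upgamma}_i \approx \upgamma_i$ for $i = 1, 2$. Combined with part (i) applied to both products, transitivity gives $\upgamma_2\upgamma_1 \approx \upgamma_1 \approx \widetilde{\upgamma}_1 \approx \widetilde{\upgamma}_2\widetilde{\upgamma}_1$, so the two products lie in a common branch class $\Gamma_i$. The equivalence $\sim$ preserves path length by construction of the associated set, so ${\sf len}(\widetilde{\upgamma}_j) = {\sf len}(\upgamma_j)$ for $j = 1, 2$, and therefore ${\sf len}(\widetilde{\upgamma}_2\widetilde{\upgamma}_1) = {\sf len}(\upgamma_2\upgamma_1)$. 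Lemma~\ref{srclemma1} (equivalently Lemma~\ref{degreeandlength} once one converts length to Anick degree inside a branch class) then forces $\widetilde{\upgamma}_2\widetilde{\upgamma}_1 \sim \upgamma_2\upgamma_1$.

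The delicate step is the identification $t_{\upgamma_2\upgamma_1} = t_{\upgamma_1}$ in part (i): one must reconcile two perfect walk decompositions of $\upgamma_1$, the one induced by continuing $\upgamma_2$'s perfect walk and the one intrinsic to $\upgamma_1$ as a stable relation cycle, and check that they share the same last piece. The parity control and the uniqueness of a perfect walk of a prescribed even length ending in a prescribed perfect suffix, via Lemma~\ref{perfectrewritinglemma}, are precisely what make this matching work; everything else is bookkeeping with lemmas already in place.
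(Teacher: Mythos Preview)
Your part (ii) is correct and essentially matches the paper's argument (the paper phrases it via Lemma~\ref{degreeandlength} on Anick degree rather than Lemma~\ref{srclemma1} on path length, but these are equivalent here).

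Part (i), however, has a genuine gap at the step where you invoke Lemma~\ref{perfectrewritinglemma}(i) to conclude $q_{s_1} = t_{\upgamma_1}$. That lemma only asserts that \emph{some} perfect walk decomposition of $\upgamma_1$ of length $s_1$ ends in $t_{\upgamma_1}$; it does not say that \emph{every} such decomposition does. Example~\ref{perfectwalkexample} shows explicitly that the same path can admit distinct perfect walks of the same even length ending in different perfect paths, so your ``matching'' of the continuation walk $(q_1,\ldots,q_{s_1})$ with the intrinsic walk ending in $t_{\upgamma_1}$ is not justified. You flag exactly this reconciliation as the delicate point, but Lemma~\ref{perfectrewritinglemma} does not deliver the uniqueness you need.

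The paper sidesteps this by identifying the tail with $t_{\upgamma_2}$ rather than $t_{\upgamma_1}$. Writing $\upgamma_2 = p_0\cdots p_{s_2-1}$ for its perfect cycle with $t_{\upgamma_2} = p_{s_2-1}$, the unique extension property gives $\upgamma_1 = p_{s_2}\cdots p_{s_2+s_1-1}$ and $t_{\upgamma_2\upgamma_1} = p_{s_2+s_1-1}$. Since the minimal period of this perfect cycle divides $\ell$, hence divides both $s_1$ and $s_2$, periodicity yields $p_{s_2+s_1-1} = p_{s_2-1} = t_{\upgamma_2}$. Lemma~\ref{srclemma0} then gives $\upgamma_2 \approx \upgamma_2\upgamma_1$, and the dual argument on heads gives $h_{\upgamma_2\upgamma_1} = h_{\upgamma_1}$, hence $\upgamma_1 \approx \upgamma_2\upgamma_1$. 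The roles are swapped relative to your attempt: tail matches $\upgamma_2$, head matches $\upgamma_1$. Your argument becomes correct once you replace the appeal to Lemma~\ref{perfectrewritinglemma} by this periodicity observation.
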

\begin{proof}
i). Since $\upgamma_1, \upgamma_2$ and $\upgamma_2 \upgamma_1$ are stable relation cycles, the unique extension property (Prop. \ref{uniqueextensionproperty}) shows that $t_{\upgamma_2 \upgamma_1} = t_{\upgamma_2}$ and $h_{\upgamma_2 \upgamma_1} = h_{\upgamma_1}$. Lemma \ref{srclemma0} then gives $\upgamma_2 \approx \upgamma_2 \upgamma_1 \approx \upgamma_1$. 

ii). Applying i) to $\widetilde{\upgamma}_2, \widetilde{\upgamma}_1$ gives $\widetilde{\upgamma}_2 \widetilde{\upgamma}_1 \approx \widetilde{\upgamma}_1 \sim \upgamma_1 \approx \upgamma_2 \upgamma_1$, and so $\widetilde{\upgamma}_2 \widetilde{\upgamma}_1 \approx \upgamma_2 \upgamma_1$. Since the stable relation cycles $\widetilde{\upgamma}_2 \widetilde{\upgamma}_1$ and $\upgamma_2 \upgamma_1$ both have Anick degree $s_3-1$ for $s_3 = s_2 + s_1$, Lemma \ref{degreeandlength} shows that $\widetilde{\upgamma}_2 \widetilde{\upgamma}_1 \sim \upgamma_2 \upgamma_1$. 
\end{proof}

\begin{lem}[Product decompositions for associated sets]\label{productdecomposition} Let $\upgamma, \upgamma_1, \upgamma_2, \upgamma_3$ denote stable relation cycles and assume that $\upgamma_3 = \upgamma_2 \upgamma_1$. Let $S_i = \{ \widetilde{\upgamma}_i \ | \ \widetilde{\upgamma}_i \sim \upgamma_i \}$ be the associated sets of $\upgamma_i$, $i= 1,2,3$. 
	\begin{enumerate}[i)] 
	\item The associated set $S_3$ decomposes as a product of elements of $S_2, S_1$, in that: 
\begin{enumerate}[a)] 
\item For every $\widetilde{\upgamma}_2 \in S_2$, there is a unique $\widetilde{\upgamma}_1 \in S_1$ such that $\widetilde{\upgamma}_2 \widetilde{\upgamma}_1 \in S_3$. 
\item For every $\widetilde{\upgamma}_1 \in S_1$, there is a unique $\widetilde{\upgamma}_2 \in S_2$ such that $\widetilde{\upgamma}_2 \widetilde{\upgamma}_1 \in S_3$. 
\item Every $\widetilde{\upgamma}_3 \in S_3$ is of the form $\widetilde{\upgamma}_3 = \widetilde{\upgamma}_2 \widetilde{\upgamma}_1$ for unique $\widetilde{\upgamma}_2 \in S_2$, $\widetilde{\upgamma}_1 \in S_1$. 
	\end{enumerate}\medskip

\noindent Hence we have $S_3 = S_2 \cdot S_1 := \{ \widetilde{\upgamma}_2 \widetilde{\upgamma}_1 \ | \ \widetilde{\upgamma}_i \in S_i \textup{ and } (\widetilde{\upgamma}_2, \widetilde{\upgamma}_1) \textup{ compatible} \}$, with the compatibility in the sense of a)-b). \medskip

\item For any $n \geq 1$ the associated set of $\upgamma^n$ is $\{ \widetilde{\upgamma}^n \ | \ \widetilde{\upgamma} \sim \upgamma \}$. 
\end{enumerate}

\end{lem}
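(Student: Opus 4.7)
The plan is to reduce both parts to a combinatorial analysis on a common doubly-infinite periodic string. By Lemma \ref{productlemma}(i), the three stable relation cycles $\upgamma_1, \upgamma_2, \upgamma_3$ all lie in the same branch equivalence class, so they share a doubly-infinite periodic string $\underline{a}$ obtained by extending any of them by periodicity. Let $m$ denote the fundamental period of $\underline{a}$; since each $\upgamma_i$ is realized by a perfect walk whose primitive perfect cycle has total path length equal to a period of $\underline{a}$, one has $m \mid n_i := {\sf len}(\upgamma_i)$ for $i = 1, 2, 3$.

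The key step is to identify each $S_i$ with a common subset $T \subseteq \Z/m$ of ``valid start positions'' via the map sending $\widetilde{\upgamma}_i$ to its start position in $\underline{a}$ modulo $m$. Concretely, the condition that the length-$n_i$ substring of $\underline{a}$ at position $j$ is an SRC is equivalent to $j$ being a cut point of some perfect cycle decomposition of $\underline{a}$, and this is a condition intrinsic to $\underline{a}$: once $j$ is such a cut point, the substring automatically forms a perfect walk, and the Recognition Lemma (\ref{recognitionlemma}) promotes it to an SRC using the length conditions inherited from $\upgamma_i$ (evenness, divisibility by $\ell$, and lower bound $\dim \Lambda + 1$). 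Since $m \mid n_i$, shifting a valid start by $n_i$ preserves the residue class modulo $m$, making the identification $S_i \leftrightarrow T$ consistent across $i = 1, 2, 3$.

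Given this parametrization, both parts follow readily. For (i)(c), a given $\widetilde{\upgamma}_3 \in S_3$ at position $j \in T$ splits at path position $n_2$ (which remains a cut point since $j + n_2 \equiv j \pmod{m}$ lies in $T$), producing $\widetilde{\upgamma}_2$ of length $n_2$ at position $j$ and $\widetilde{\upgamma}_1$ of length $n_1$ at position $j + n_2$. Both are SRCs of the correct Anick degree, and combining Lemma \ref{productlemma}(i) (branch equivalence from the product decomposition) with Lemma \ref{degreeandlength} (equal Anick degree within a branch implies $\sim$) identifies them as elements of $S_2, S_1$ respectively; uniqueness is forced by the prescribed lengths. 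Parts (i)(a) and (i)(b) follow by the same parametrization: a chosen factor forces the complementary one uniquely via its start position. For (ii), $\upgamma^n$ has the same periodic string as $\upgamma$ and hence the same set $T$, and the map $\widetilde{\upgamma} \mapsto \widetilde{\upgamma}^n$ is the identity on start positions, yielding the claimed bijection.

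The main obstacle I foresee is justifying the length-independence of $T$: that if a position $j$ starts an SRC of one admissible length, then it starts SRCs of every admissible length. The subtlety is that $\underline{a}$ may admit several different perfect cycle decompositions (for instance, the two decompositions of $t^{kn}$ in $k[t]/(t^n)$ noted in Example \ref{perfectwalkexample}), so one must argue carefully that it suffices to work with a single decomposition coming from the walk of a fixed $\upgamma_i$, extending it by periodicity and invoking the unique extension property (Corollary \ref{uniqueextensionproperty}) together with the Recognition Lemma to produce the requisite SRCs at all other admissible lengths.
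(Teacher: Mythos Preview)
Your approach is correct and uses the same essential ingredients as the paper (the Recognition Lemma, the Product Lemma, Lemma \ref{degreeandlength}, and the unique extension property), but you organise things differently by introducing the intermediate parametrisation $S_i \leftrightarrow T \subseteq \Z/m$. The paper instead works one element at a time: for (a) it starts from $\widetilde{\upgamma}_2 = p_0\cdots p_{s_2-1}$, extends the perfect walk to length $s_3$, and takes the remaining segment as $\widetilde{\upgamma}_1$; for (c) it writes $\widetilde{\upgamma}_3 = p_0\cdots p_{s_3-1}$ and splits at walk-position $s_2$ (using that $s_1,s_2$ are multiples of the minimal period of the perfect cycle). In both proofs the identification ``this factor lies in $S_i$'' is done by exactly the chain you describe: Product Lemma gives branch equivalence, equal Anick degree plus Lemma \ref{degreeandlength} upgrades this to $\sim$.

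Your abstraction is cleaner conceptually, and your observation that $m \mid n_i$ (so shifting by $n_i$ is the identity on $\Z/m$) is the right way to see why the three bijections $S_i \leftrightarrow T$ are compatible. One comment: your ``cut point of some perfect cycle decomposition'' description of $T$ is a slight red herring, as you yourself note. You never need it; what you actually use (and what suffices) is the direct equivalence ``position $j$ starts an SRC of length $n_i$'' for varying $i$, proved exactly as you sketch in the final paragraph via truncation/extension of perfect walks plus the Recognition Lemma and Lemma \ref{degreeandlength}. Dropping the cut-point language would tighten the argument without loss.
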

\begin{proof}
i). Let $s_i - 1$ be the Anick degree of $\upgamma_i$, so that $s_3 = s_2 + s_1$. We prove 1). Let $\widetilde{\upgamma}_2 \in S_2$ and write $\widetilde{\upgamma}_2 = p_0 p_1 \dots p_{s_2 -1}$ for the perfect cycle with $t_{\widetilde{\upgamma}_2} = p_{s_2-1}$, and extend this to a perfect walk $p_0 p_1 \dots p_{s_2 - 1} p_0 p_1 \dots p_{s_1 - 1}$ of length $s_2 + s_1 = s_3$. Set $\widetilde{\upgamma}_1 := p_0 p_1 \dots p_{s_1 - 1}$ and write $\widetilde{\upgamma}_3 := \widetilde{\upgamma}_2 \widetilde{\upgamma}_1$. Note that $\widetilde{\upgamma}_1, \widetilde{\upgamma}_3$ are given by perfect walks of appropriate length and so are stable relation cycles by the recognition lemma (Lemma \ref{recognitionlemma}). 

Part i) of the product lemma (Lemma \ref{productlemma}) gives $\widetilde{\upgamma}_1 \approx \widetilde{\upgamma}_2 \sim \upgamma_2 \approx \upgamma_1$, and so $\widetilde{\upgamma}_1 \approx \upgamma_1$. Since $\widetilde{\upgamma}_1$ and $\upgamma_1$ are both $(s_1-1)$-chains, Lemma \ref{degreeandlength} then gives $\widetilde{\upgamma}_1 \sim \upgamma_1$, and finally part ii) gives $\widetilde{\upgamma}_3 = \widetilde{\upgamma}_2 \widetilde{\upgamma}_1 \sim \upgamma_2 \upgamma_1 = \upgamma_3$. Finally, the uniqueness follows from the unique extension property (Prop. \ref{uniqueextensionproperty}), and 1) follows. The proof of 2) is dual. 

For 3), let $\widetilde{\upgamma}_3 \in S_3$ and write $\widetilde{\upgamma}_3 = p_0 p_1 \dots p_{s_3 - 1}$ for the perfect cycle with tail $t_{\widetilde{\upgamma}_3} = p_{s_3 - 1}$. Since $s_3 = s_2 + s_1$ and both $s_1, s_2$ are multiples of $\ell$, and therefore of the minimal period of this perfect cycle, we can break it down further as $\widetilde{\upgamma}_3 = p_0 p_1 \dots p_{s_3 - 1} = (p_0 p_1 \dots p_{s_1 - 1})(p_0 p_1 \dots p_{s_2 - 1})$. Letting $\widetilde{\upgamma}_i := p_0 p_1 \dots p_{s_i - 1}$, since the length $s_i$ is appropriate the recognition lemma (Lemma \ref{recognitionlemma}) again shows that $\widetilde{\upgamma}_i$ are stable relation cycles. Part i) of the product lemma (Lemma \ref{productlemma}) then gives $\widetilde{\upgamma}_i \approx \widetilde{\upgamma}_3 \sim \upgamma_3 \approx \upgamma_i$ for $i = 1,2$, so that $\widetilde{\upgamma}_i \approx \upgamma_i$, and Lemma \ref{degreeandlength} then gives $\widetilde{\upgamma}_i \sim \upgamma_i$. Hence $\widetilde{\upgamma}_3 = \widetilde{\upgamma}_2 \widetilde{\upgamma}_1$ with $\widetilde{\upgamma}_2 \in S_2$, $\widetilde{\upgamma}_1 \in S_1$ as claimed. Finally, to see uniqueness assume that $\widetilde{\upgamma}_3 = \widetilde{\upgamma}_2 \widetilde{\upgamma}_1 = \widetilde{\upgamma}'_2 \widetilde{\upgamma}_1'$ for some possibly different $\widetilde{\upgamma}'_1 \in S_1, \widetilde{\upgamma}'_2 \in S_2$. The unique extension property (Prop. \ref{uniqueextensionproperty}) implies that $t_{\widetilde{\upgamma}_1} = t_{\widetilde{\upgamma}_3} = t_{\widetilde{\upgamma}'_1}$. Lemma \ref{srclemma0} then gives $\widetilde{\upgamma}'_1 \approx \widetilde{\upgamma}_1$, and since they have the same Anick degree Lemma \ref{degreeandlength} gives $\widetilde{\upgamma}'_1 \sim \widetilde{\upgamma}_1$. In particular ${\sf len}(\widetilde{\upgamma}'_1) = {\sf len}(\widetilde{\upgamma}_1)$, and so $\widetilde{\upgamma}'_1 = \widetilde{\upgamma}_1$. The equality $\widetilde{\upgamma}'_2 = \widetilde{\upgamma}_2$ then follows from 2), and we have shown 3).

ii). This follows from i) by setting $\upgamma_1 = \upgamma$ and $\upgamma_2 = \upgamma^{n-1}$ and induction on $n$.  
\end{proof}

We can now prove the claim. 
\begin{prop}\label{multiplicativeproperties} Let $\upgamma$ and $\upgamma_1, \upgamma_2, \upgamma_3$ denote stable relation cycles. 
	\begin{enumerate}[i)]
	\item If $\chi_{\upgamma_1} \cdot \chi_{\upgamma_2} \neq 0$, then $\upgamma_1 \approx \upgamma_2$. 
	\item Assume that $\upgamma_1 \approx \upgamma_2$. Then there is a $\upgamma_3 \approx \upgamma_1, \upgamma_2$ such that $\chi_{\upgamma_1} \cdot \chi_{\upgamma_2} = \chi_{\upgamma_3}$, and then ${\sf ord}_i(\upgamma_1) + {\sf ord}_i(\upgamma_2) = {\sf ord}_i(\upgamma_3)$. 
	\item We have $\chi_\upgamma^n = \chi_{\upgamma^n}$ for any $n \geq 1$. 
\end{enumerate}
\end{prop}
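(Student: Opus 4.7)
The plan is to reduce all three statements to the Product Lemma (Lemma~\ref{productlemma}) and the Product Decomposition Lemma (Lemma~\ref{productdecomposition}), interpreted via the explicit $A_\infty$-multiplication from Corollary~\ref{newhigherproducts}. Write $\chi_{\upgamma_i} = \sum_{\widetilde{\upgamma}_i \in S_i} \widetilde{\upgamma}_i^\vee$, and let $s_i - 1$ denote the Anick degree of $\upgamma_i$. Then $\chi_{\upgamma_1} \cdot \chi_{\upgamma_2}$ expands as a sum of terms $m_2(\widetilde{\upgamma}_1^\vee, \widetilde{\upgamma}_2^\vee)$, each of which is a signed multiple of $(\widetilde{\upgamma}_2 \widetilde{\upgamma}_1)^\vee$ precisely when $\widetilde{\upgamma}_2 \widetilde{\upgamma}_1$ is an Anick $(s_1+s_2-1)$-chain, and zero otherwise. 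Since the common sign depends only on the fixed parities $r_i = s_i - 1$, and since distinct compatible pairs yield distinct concatenations by Corollary~\ref{uniqueextensionproperty}, no cancellation occurs and the product is controlled entirely by the combinatorics of those pairs whose concatenation is an Anick chain.

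For (i), if $\chi_{\upgamma_1} \cdot \chi_{\upgamma_2} \neq 0$ then some $\widetilde{\upgamma}_2 \widetilde{\upgamma}_1$ is an Anick chain of degree $s_1+s_2-1$. Since $h_{\widetilde{\upgamma}_1}$ is perfect, Corollary~\ref{uniqueextensionproperty}(ii) realises this concatenation as a perfect walk of even length $s_1 + s_2 \geq \dim \Lambda + 1$ divisible by $\ell$, so the recognition lemma (Lemma~\ref{recognitionlemma}) makes it a stable relation cycle, and Lemma~\ref{productlemma}(i) then gives $\widetilde{\upgamma}_1 \approx \widetilde{\upgamma}_2$, hence $\upgamma_1 \approx \upgamma_2$. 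For (ii), given $\upgamma_1 \approx \upgamma_2$, pick any $\widetilde{\upgamma}_1 \in S_1$, write it as a perfect walk $p_0 \cdots p_{s_1-1}$ with $p_0 = h_{\widetilde{\upgamma}_1}$, and extend the underlying perfect cycle backwards by $s_2$ steps to define $\widetilde{\upgamma}_2 := p_{-s_2} \cdots p_{-1}$. The recognition lemma makes $\widetilde{\upgamma}_2$ a stable relation cycle of Anick degree $s_2 - 1$, and Lemma~\ref{productlemma}(i) together with Lemma~\ref{degreeandlength} forces $\widetilde{\upgamma}_2 \sim \upgamma_2$. Setting $\upgamma_3 := \widetilde{\upgamma}_2 \widetilde{\upgamma}_1$ (again a stable relation cycle), Lemma~\ref{productdecomposition}(i) ensures the nonzero summands of $\chi_{\upgamma_1} \cdot \chi_{\upgamma_2}$ biject with $S_3$ and contribute once each with a common sign, yielding $\chi_{\upgamma_1} \cdot \chi_{\upgamma_2} = \chi_{\upgamma_3}$; the ord additivity is immediate from ${\sf len}(\upgamma_3) = {\sf len}(\upgamma_1) + {\sf len}(\upgamma_2)$ after dividing by $\gcd_i$.

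For (iii), observe first that $\upgamma^n$ is itself a stable relation cycle, since concatenating $n$ copies of a perfect walk $\upgamma = p_0 \cdots p_{s-1}$ produces a perfect walk of length $ns$ satisfying the hypotheses of the recognition lemma. Applying (ii) to the compatible pair $\widetilde{\upgamma}_1 = \upgamma^{n-1}$, $\widetilde{\upgamma}_2 = \upgamma$ then yields $\chi_{\upgamma^{n-1}} \cdot \chi_\upgamma = \chi_{\upgamma \cdot \upgamma^{n-1}} = \chi_{\upgamma^n}$, and induction on $n$ completes the claim. The main obstacle I anticipate is not conceptual but bookkeeping: one must verify that the uniform $A_\infty$-sign from Corollary~\ref{newhigherproducts} is consistently absorbed across the bijection between compatible pairs and $S_3$, so that $\chi_{\upgamma_1} \cdot \chi_{\upgamma_2} = \chi_{\upgamma_3}$ holds on the nose rather than up to a scalar, possibly after a minor sign adjustment in the definition of $\chi_\upgamma$.
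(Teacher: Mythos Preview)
Your proposal is correct and follows essentially the same route as the paper: for (i) you use the unique extension property, the recognition lemma, and the product lemma exactly as the paper does; for (ii) you invoke the product decomposition of associated sets (Lemma~\ref{productdecomposition}) and rigidity to identify the nonzero summands with $S_3$; and (iii) is the specialisation $\upgamma_2 = \upgamma$, $\upgamma_1 = \upgamma^{n-1}$, again as in the paper. One small organisational difference: in (ii) you directly construct a compatible pair $(\widetilde{\upgamma}_1,\widetilde{\upgamma}_2)$ by extending a perfect walk backwards, whereas the paper first shows $\chi_{\upgamma_1}\cdot\chi_{\upgamma_2}\neq 0$ via (iii) before choosing representatives --- your ordering avoids the apparent forward reference and is arguably cleaner.

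On your sign concern: the paper simply writes the Yoneda product as $\widetilde{\upgamma}_1^\vee \cdot \widetilde{\upgamma}_2^\vee = (\widetilde{\upgamma}_2 \widetilde{\upgamma}_1)^\vee$ without a sign, and the point is that since all stable relation cycles have odd Anick degree $r_i = s_i - 1$, the sign in $m_2$ from Corollary~\ref{newhigherproducts} is \emph{uniform} across all terms of $\chi_{\upgamma_1}\cdot\chi_{\upgamma_2}$, so no cancellation can occur and no redefinition of $\chi_\upgamma$ is needed for the structural statements to hold. One tiny citation slip: the claim that distinct compatible pairs give distinct concatenations is not Corollary~\ref{uniqueextensionproperty} but the uniqueness clause of Lemma~\ref{productdecomposition}(i)(c).
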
 
\begin{proof}

i). If $\chi_{\upgamma_1} \cdot \chi_{\upgamma_2} \neq 0$ then there exists $\widetilde{\upgamma}_1 \sim \upgamma_1$ such that $\widetilde{\upgamma}_1^\vee \cdot \upgamma_2^\vee = (\upgamma_2 \widetilde{\upgamma}_1)^\vee \neq 0$. In this case the Anick chain $\upgamma_2 \widetilde{\upgamma}_1$ must be given by a perfect walk by the unique extension property (Prop. \ref{uniqueextensionproperty}), and $\upgamma_2 \widetilde{\upgamma}_1$ is then a stable relation cycle by the recognition lemma (Lemma \ref{recognitionlemma}) as this perfect walk has the required length. The product lemma (Lemma \ref{productlemma}) then gives $\upgamma_1 \sim \widetilde{\upgamma}_1 \approx \upgamma_2$ and so $\upgamma_1 \approx \upgamma_2$. 

\medskip

ii). Let $n_1, n_2 \geq 1$ be such that $\upgamma_1^{n_1} \sim \upgamma_2^{n_2}$. Then $\chi_{\upgamma_1^{n_1}} = \chi_{\upgamma_2^{n_2}}$ and so 
\begin{align}
	\chi_{\upgamma_1}^{n_1} \cdot \chi_{\upgamma_2}^{n_2} = 	
	\chi_{\upgamma_1^{n_1}} \cdot \chi_{\upgamma_2^{n_2}}
	= \chi_{\upgamma_1^{n_1}} \cdot \chi_{\upgamma_1^{n_1}}
	= 
	\chi_{\upgamma_1^{n_1 + n_1}} \neq 0. 
\end{align}
Hence $\chi_{\upgamma_1} \cdot \chi_{\upgamma_2} \neq 0$. Possibly replacing $\upgamma_1$ by $\widetilde{\upgamma}_1 \sim \upgamma_1$, we can assume that $\upgamma_1^{\vee} \upgamma_2^{\vee} = (\upgamma_2 \upgamma_1)^{\vee} \neq 0$; in particular $\upgamma_2 \upgamma_1$ is an Anick chain, and the recognition lemma again shows that $\upgamma_3 := \upgamma_2 \upgamma_1$ is a stable relation cycle. We are then in the setting of Lemma \ref{productdecomposition}.

The decomposition of the associated set $S_3 = S_2 \cdot S_1$ of Lemma \ref{productdecomposition} i) then gives 
\begin{align}
	\chi_{\upgamma_3} = \sum_{ \widetilde{\upgamma}_3 \in S_3 } \widetilde{\upgamma}_3^\vee 
	&= \sum_{ \widetilde{\upgamma}_2 \widetilde{\upgamma}_1 \in S_3 } (\widetilde{\upgamma}_2\widetilde{\upgamma}_1)^\vee \\ 
	&= \sum_{ \widetilde{\upgamma}_2 \widetilde{\upgamma}_1 \in S_3, } \widetilde{\upgamma}_1^\vee \widetilde{\upgamma}_2^\vee \\ 
	&= \sum_{ \widetilde{\upgamma}_1 \in S_1 } \widetilde{\upgamma}_1^\vee \cdot  \sum_{\widetilde{\upgamma}_2 \in S_2} \widetilde{\upgamma}_2^\vee \\ 
	&= \chi_{\upgamma_1} \cdot \chi_{\upgamma_2}
\end{align}
where the second equality writes $\widetilde{\upgamma}_3 = \widetilde{\upgamma}_2 \widetilde{\upgamma}_1$ in terms of the unique decomposition of Lemma \ref{productdecomposition} and the fourth equality follows from the Rigidity Lemma (Lemma \ref{rigidity}). This proves the main claim, and ${\sf ord}_i(\upgamma_1) + {\sf ord}_i(\upgamma_2) = {\sf ord}_i(\upgamma_3)$ simply follows from additivity of path lengths. 

\medskip

iii). For $n = 1$ there is nothing to prove and $n \geq 2$ follows from specialising the argument of ii) to $\upgamma_1 = \upgamma^{n-1}$ and $\upgamma_2 = \upgamma$, giving $\upgamma_3 = \upgamma^n$.  
\end{proof}

We can finally describe the ring structure of $\mathcal{R} = k\langle \chi_\upgamma \ | \ \upgamma \rangle \subseteq {\rm Ext}^*_\Lambda(\Bbbk, \Bbbk)$. First, we observed at the beginning of this subsection that Prop. \ref{multiplicativeproperties} implies that $\N_{\Gamma_i} \subseteq \N$ is a subsemigroup for each branch equivalence class $\Gamma_i$. 
Since $\gcd (\N_{\Gamma_i}) = 1$ by construction, we see that $|\N \setminus \N_{\Gamma_i}| < \infty$ by \cite[Lemma 2.1]{RGS}, so that $\N_{\Gamma_i}$ is a numerical semigroup. More importantly for us, $\N_{\Gamma_i}$ is a finitely generated semigroup \cite[Corollary 2.8]{RGS}. It follows that the (semigroup) algebra 
\begin{align} 
	k[t_i^{\N_{\Gamma_i}}] = k\langle t_i^n \ | \ n \in \N_{\Gamma_i} \rangle \subseteq k[t_i]. 
\end{align} 
is a finitely generated $k$-algebra, and in particular is Noetherian. 

Let us write $\varepsilon: k[t_i^{\N_{\Gamma_i}}] \to k$ for the augmentation with $\varepsilon(t_i^n) = 0$ for all $n \in \N_{\Gamma_i}$. Lastly, we grade the algebra above by setting $|t_i| = \gcd\{ | \chi_\upgamma | \ | \ \upgamma \in \Gamma_i \}$. 

\begin{prop}\label{ringstructure} The graded algebra $\mathcal{R} = k\langle \chi_{\upgamma} \ | \ \upgamma \rangle$ is Noetherian, reduced and of Krull dimension one. There is an embedding of graded algebras 
	\begin{align*}
		\phi: \mathcal{R} \hookrightarrow \prod_{i=1}^b k[t_i] 
	\end{align*}
	inducing an isomorphism onto the fibre product $\mathcal{R} \cong k[t_1^{\N_{\Gamma_1}}]\  _\varepsilon\! \! \times_\varepsilon \ \! \dots \ \!  _\varepsilon\! \!\times_\varepsilon k[t_b^{\N_{\Gamma_b}}]$. 
\end{prop}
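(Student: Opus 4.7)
The plan is to leverage Proposition \ref{multiplicativeproperties} to decompose $\mathcal{R}$ along branches and then recognise it as a fibre product of numerical semigroup algebras. First, for each branch equivalence class $\Gamma_i$, let $\mathcal{R}_i := k\langle \chi_\upgamma \mid \upgamma \in \Gamma_i \rangle \subseteq \mathcal{R}$. By Lemma \ref{srclemma1} the function ${\sf ord}_i$ induces a bijection between $\sim$-classes in $\Gamma_i$ and the numerical semigroup $\N_{\Gamma_i}$, so that the classes $\{\chi_\upgamma\}_{\upgamma \in \Gamma_i}$ are linearly independent and canonically indexed by $\N_{\Gamma_i}$. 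Proposition \ref{multiplicativeproperties}(ii) then shows that the assignment $\chi_\upgamma \mapsto t_i^{{\sf ord}_i(\upgamma)}$ extends to a graded algebra isomorphism $\mathcal{R}_i \xrightarrow{\cong} k[t_i^{\N_{\Gamma_i}}]$ (where the grading on $t_i$ is chosen as stated just above the proposition to match cohomological degree). The additivity ${\sf ord}_i(\upgamma_1) + {\sf ord}_i(\upgamma_2) = {\sf ord}_i(\upgamma_3)$ from Proposition \ref{multiplicativeproperties}(ii) is precisely what makes this map multiplicative.

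Next, I would assemble the global map $\phi: \mathcal{R} \to \prod_{i=1}^{b} k[t_i^{\N_{\Gamma_i}}]$ by setting $\phi(1) = (1, \ldots, 1)$ and $\phi(\chi_\upgamma)$ equal to the tuple whose $i$-th coordinate is $t_i^{{\sf ord}_i(\upgamma)}$ (with $\upgamma \in \Gamma_i$) and whose remaining coordinates vanish. That $\phi$ is a graded algebra homomorphism follows coordinatewise from Proposition \ref{multiplicativeproperties}: part (i) guarantees $\chi_{\upgamma_1} \cdot \chi_{\upgamma_2} = 0$ whenever $\upgamma_1, \upgamma_2$ lie in different branches, matching the slotwise product vanishing on the right-hand side, while part (ii) handles the single-branch case via the identification of $\mathcal{R}_i$ with $k[t_i^{\N_{\Gamma_i}}]$ above. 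Injectivity then follows because the bases $\{\chi_\upgamma\}_{\upgamma \in \Gamma_i}$ for different $i$ are disjoint in positive degree, so $\mathcal{R} = k \cdot 1 \oplus \bigoplus_{i=1}^{b} \overline{\mathcal{R}_i}$, and $\phi$ is injective on each summand.

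By inspection, $\phi(\chi_\upgamma)$ lies in the augmentation ideal of every factor and the image of $1$ is the diagonal element, so $\phi(\mathcal{R})$ consists exactly of the tuples $(a_1, \ldots, a_b) \in \prod_i k[t_i^{\N_{\Gamma_i}}]$ with $\varepsilon(a_1) = \ldots = \varepsilon(a_b)$; this is the fibre product as stated. For the ring-theoretic properties: each $\N_{\Gamma_i}$ is finitely generated by \cite[Corollary 2.8]{RGS}, so each $k[t_i^{\N_{\Gamma_i}}]$ is a finitely generated $k$-subalgebra of $k[t_i]$, hence Noetherian, reduced and one-dimensional. The fibre product is cut out inside $\prod_i k[t_i^{\N_{\Gamma_i}}]$ by the linear equations $\varepsilon(a_i) = \varepsilon(a_j)$, so it is still a finitely generated $k$-algebra and therefore Noetherian. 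It is a subring of a reduced ring, hence reduced. It has the same prime spectrum as $\prod_i k[t_i^{\N_{\Gamma_i}}]$ except that the $b$ maximal ideals at the origins of the factors are identified into a single maximal ideal, so its Krull dimension remains one.

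The main obstacle will be verifying carefully that $\phi$ is well-defined and multiplicative, especially the matching of products across the disjoint index sets $\{\Gamma_i\}$; the arithmetic is routine once orthogonality (Proposition \ref{multiplicativeproperties}(i)) is in hand, but one must keep the constant-term contribution separate from the positively graded parts to correctly identify the image with a fibre product rather than a direct product.
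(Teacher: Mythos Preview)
Your proposal is correct and follows essentially the same route as the paper: define $\phi$ on generators by $\chi_\upgamma \mapsto t_i^{{\sf ord}_i(\upgamma)}$ in the $i$-th slot (zero elsewhere), invoke Proposition~\ref{multiplicativeproperties} and Lemma~\ref{srclemma1} to see that $\phi$ is a well-defined injective algebra map carrying the multiplicative basis $\{\chi_\upgamma\}$ bijectively onto a basis of the fibre product, and then read off the ring-theoretic properties from the target. Your version is somewhat more explicit about why the image is the fibre product rather than the full direct product and about how Noetherian/reduced/Krull dimension one follow, but the substance is the same.
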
 
\begin{proof}
	It is clear that the graded algebra $k[t_1^{\N_{\Gamma_1}}]\  _\varepsilon\! \! \times_\varepsilon \ \! \dots \ \!  _\varepsilon\! \!\times_\varepsilon k[t_b^{\N_{\Gamma_b}}]$ is Noetherian, reduced and of Krull dimension one, and so it suffices to prove the second claim. Define the $k$-algebra morphism $\phi: \mathcal{R} \to \prod_{i=1}^b k[t_i]$ on generators by 
	\[
		\phi(\chi_\upgamma) = (0, \dots, 0, t_i^{{\sf ord}_i(\upgamma)}, 0,  \dots, 0) \textup{ for } \upgamma \in \Gamma_i
	\] 
	where one may interpret the zero coefficients as $t_j^{{\sf ord}_j(\upgamma)} = t_j^{-\infty} = 0$ for $j \neq i$. 

	Then $\phi$ is a well-defined $k$-algebra morphism by Lemma \ref{srclemma1} and Prop. \ref{multiplicativeproperties}. Moreover Prop. \ref{multiplicativeproperties} shows that $\{ \chi_\upgamma \}_\upgamma$ is a multiplicative basis for $\mathcal{R}$, which $\phi$ sends bijectively onto a multiplicative basis for $k[t_1^{\N_{\Gamma_1}}]\  _\varepsilon\! \! \times_\varepsilon \ \! \dots \ \!  _\varepsilon\! \!\times_\varepsilon k[t_b^{\N_{\Gamma_b}}] \subseteq \prod_{i=1}^b k[t_i]$, and so $\phi$ induces an algebra isomorphism of $\mathcal{R}$ onto its image. 

	Finally, the grading on $\mathcal{R}$ induces a grading onto $k[t_1^{\N_{\Gamma_1}}]\  _\varepsilon\! \! \times_\varepsilon \ \! \dots \ \!  _\varepsilon\! \!\times_\varepsilon k[t_b^{\N_{\Gamma_b}}]$ by transport of structure, and since $\N_{\Gamma_i} \subseteq \N$ are numerical semigroups (in particular $\gcd(\N_{\Gamma_i}) = 1)$ this forces $|t_i| = \gcd \{ |\chi_\upgamma| \ | \ \upgamma \in \Gamma_i \}$. Since this is the grading we imposed on the larger algebra $\prod_{i=1}^b k[t_i]$, we see that $\phi$ was in fact an embedding of graded subalgebras and we are done.  
\end{proof}

\subsection{Computation of a ring of periodicity operators.} Let $\Lambda = kQ/I$ for the quiver given by an oriented cycle on seven vertices with arrows
$a,b,c,d,e,f,g$ and relations $abcd, bcde, def, efg, fgab, gabc$. This example was considered by Green, \begin{wrapfigure}{r}{0.34\textwidth}
\vspace{-1 em}
\begin{tikzpicture}[auto, scale = 0.57]
    \foreach \a in {1,2,3,4,5,6,7}
    {
        \node (u\a) at ({\a*51.42}:3){$\bullet$};
        \draw [latex-,
        		line width = 1.15 pt,
        		domain=\a*51.42+5:(\a+1)*51.42-5] plot ({3*cos(\x)}, {3*sin(\x)});}
    \node (a1) at ({1*51.42+35}:3.5){$g$};
    \node (a2) at ({2*51.42+35}:3.5){$a$};
    \node (a3) at ({3*51.42+35}:3.5){$b$};
    \node (a4) at ({4*51.42+35}:3.5){$c$};
    \node (a5) at ({5*51.42+35}:3.5){$d$};  
    \node (a6) at ({6*51.42+35}:3.5){$e$};  
    \node (a7) at ({7*51.42+35}:3.5){$f$};  
\end{tikzpicture}
\vspace{-1 em}
\end{wrapfigure}
Snashall and Solberg in \cite[Example 4.1, 7.5]{GSS06}. The algebra $\Lambda$ is Gorenstein of dimension $6$, and has a single branch $\Gamma$ with numerical semigroup $\N_\Gamma = \{ 2, 3, \dots \}$. We then have $\mathcal{R} \cong k[t^2, t^3] \subset k[t]$ with $|t| = 4$, with $t^2, t^3$ corresponding to $\chi_{\upgamma_2}, \chi_{\upgamma_3}$ where $\upgamma_2 = (abcdefg)^2$ and $\upgamma_3 = (abcdefg)^3$.  
In \cite[Example 7.5]{GSS06} the authors compute the Hochschild cohomology ring modulo nilpotents as ${\rm HH}^*(\Lambda, \Lambda)/\mathcal{N} \cong k[t]$ with $|t| = 4$, and the embedding above is precisely the natural map $\mathcal{R} \hookrightarrow {\rm HH}^*(\Lambda, \Lambda)/\mathcal{N}$, with agreement in all degrees past the Gorenstein dimension. Note that the ring $\mathcal{R}$ does not contain elements of lower degree by design.

\section{Main theorem} \label{sec:main}

Using the machinery developed in the previous sections we are now able to characterise the monomial algebras satisfying the ${\bf Fg}$ conditions of Snashall-Solberg. We begin with some minor setup.  

Consider a Gorenstein monomial algebra $\Lambda$, and we may assume that $\gldim \Lambda = \infty$ as all results below will immediately reduce to this case. To prove that ${\bf Fg}$ holds for $\Lambda$, it is enough to show that ${\rm Ext}^*_\Lambda(\Bbbk, \Bbbk)$ is module-finite over its $A_\infty$-central subalgebra $\mathcal{R} \subseteq {\rm Ext}^*_\Lambda(\Bbbk, \Bbbk)$. We will prove a slightly stronger statement with respect to a Noether normalisation of $\mathcal{R}$. 

Let $\Gamma_1, \Gamma_2, \dots, \Gamma_b$ be all branch equivalence classes of stable relation cycles, with accompanying numerical semigroups $\N_{\Gamma_1}, \N_{\Gamma_2}, \dots, \N_{\Gamma_b}$. We have seen in Prop. \ref{ringstructure} that $\mathcal{R}$ depends only on the structure of these numerical semigroups, and this makes it easy to construct a Noether normalisation.  

Let $n_i := \min \N_{\Gamma_i}$ and let $\upgamma_i \in \Gamma_i$ be a stable relation cycle with ${\sf ord}_i(\upgamma_i) = n_i$. Consider the class $\chi_i := \chi_{\upgamma_i}$, which is independent of the choice of $\upgamma_i$ above by Lemma \ref{srclemma1}. Introduce the following class $\chi \in \mathcal{R}$: 
\begin{equation}\label{chiclass} 
	\chi := \sum_{i=1}^b \chi_i^{m_i} 
\end{equation}
where $m_i \geq 1$ are such that $m_i |\chi_i| = m_j |\chi_j|$ for all $i,j$ and $\gcd \{ m_i \} = 1$. From Prop. \ref{ringstructure} it is clear that the polynomial subalgebra $k[\chi] \subseteq \mathcal{R}$ is a Noether normalisation, and we let $p := |\chi| = m_i |\chi_i|$ denote its degree. We note that $p$ is a multiple of the period $\ell$ of $\Lambda$. 

Next, recall that by Corollary \ref{perfpathstabilise}, any perfect path $w \in \Lambda$ over a Gorenstein monomial algebra can be extended to a perfect cycle $(w_0, w_1, \dots, w_{s-1})$ with $w = w_{s-1}$ such that $\upgamma = w_0 w_1 \dots w_{s-1}$ is a stable relation cycle. 
\begin{lem}\label{minvalue} Let $\Lambda$ be a Gorenstein monomial algebra and $w \in \Lambda$ be a perfect path. Let $\upgamma = w_0 w_1 \dots w_{s-1}$ be a stable relation cycle with $w = w_{s-1}$, and $\upgamma$ has minimal path length amongst stable relation cycles with this property. If $\upgamma \in \Gamma_i$, then ${\sf ord}_i(\upgamma) \in \N_{\Gamma_i}$ takes on the minimal value. 
\end{lem}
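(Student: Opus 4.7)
The plan is to show that ${\sf len}(\upgamma) = \ell_{\min} := \min \widetilde{\N_{\Gamma_i}}$, from which
\[
{\sf ord}_i(\upgamma) = \frac{{\sf len}(\upgamma)}{\gcd_i} = \frac{\ell_{\min}}{\gcd_i} = n_i
\]
follows immediately. The inequality ${\sf len}(\upgamma) \geq \ell_{\min}$ is automatic since $\upgamma \in \Gamma_i$, so I would focus on the reverse inequality by producing a rival stable relation cycle in $\Gamma_i$ that admits a perfect walk ending in $w$ and has path length exactly $\ell_{\min}$.

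To this end, I would select any $\upgamma' \in \Gamma_i$ of minimal path length, together with its tail-aligned perfect walk $\upgamma' = p'_0 p'_1 \dots p'_{s'-1}$. Since $\upgamma \approx \upgamma'$, there exist $n_1, n_2 \geq 1$ with $\upgamma^{n_1} \sim \upgamma'^{n_2}$, so these two powers sit in a common associated set and occur as length-matching windows of a single bi-infinite periodic arrow sequence. Using the unique extension property of perfect pairs (Prop.~\ref{uniqueextensionproperty}), each perfect path in a perfect walk determines both its successor and its predecessor uniquely; I would leverage this to show that the bi-infinite perfect walks obtained by periodically repeating the walks of $\upgamma^{n_1}$ and $\upgamma'^{n_2}$ must coincide up to shift. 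The upshot is the key structural claim: the minimal perfect cycles underlying $\upgamma$ and $\upgamma'$ agree as cyclic sequences of perfect paths. In particular $w$, which appears in the minimal perfect cycle of $\upgamma$ by hypothesis (being the last entry $w_{s-1}$ of a perfect walk for $\upgamma$), also appears in that of $\upgamma'$, say as $w = p'_m$ for some index~$m$.

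Applying the Completeness Lemma (Lemma \ref{completeness}) with rotation index $k = m - s' + 1$, I obtain a stable relation cycle $\upgamma'' := p'_k p'_{k+1} \dots p'_{k+s'-1}$ with $\upgamma'' \sim \upgamma'$, whose tail-aligned decomposition satisfies $t_{\upgamma''} = p'_{k+s'-1} = p'_m = w$ and whose path length is ${\sf len}(\upgamma'') = {\sf len}(\upgamma') = \ell_{\min}$. In particular $\upgamma''$ admits a perfect walk ending in $w$, so the minimality assumption on $\upgamma$ yields ${\sf len}(\upgamma) \leq {\sf len}(\upgamma'') = \ell_{\min}$, completing the proof.

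The main obstacle is the structural claim that $\upgamma$ and $\upgamma'$ share the same minimal perfect cycle up to cyclic rotation: one must translate the arrow-level relation $\upgamma^{n_1} \sim \upgamma'^{n_2}$ into a statement about the aligned periodic perfect walk decompositions of the common bi-infinite arrow sequence, invoking forward and backward unique extension of perfect pairs to rule out rival decompositions. Once this bridge is established, the remaining manipulations are direct applications of the Completeness Lemma and the definition of ${\sf ord}_i$.
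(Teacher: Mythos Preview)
Your strategy is sound in spirit but leaves the crucial step genuinely open. The ``structural claim'' that $w$ appears in the minimal perfect cycle underlying the tail-aligned decomposition of $\upgamma'$ does not obviously reduce to the unique extension property. A single path can admit several distinct perfect-walk decompositions (Example~\ref{perfectwalkexample}), and it is not clear a priori that these decompositions all come from the same minimal perfect cycle. In particular, the perfect path $w = w_{s-1}$ need not coincide with the intrinsic tail $t_\upgamma$ (the paper's own proof explicitly notes this), so the bi-infinite perfect walk through $w$ may differ from the one through $t_{\upgamma'}$. Your sketched justification (``rule out rival decompositions'') does not address this, since unique forward and backward extension determines the walk from a \emph{fixed} perfect path but says nothing about how two walks through different perfect paths, which happen to concatenate to the same arrow string, relate to one another. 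A secondary slip: you assert $t_{\upgamma''} = w$ after applying the Completeness Lemma, but that lemma only says $\upgamma''$ is a stable relation cycle, not that the last entry of the rotated walk is its tail. (This slip is harmless for your purposes, since you only need that $\upgamma''$ admits \emph{some} perfect walk ending in $w$.)

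The paper avoids the structural claim entirely. Rather than locating $w$ in the cycle of a minimal representative, it uses Lemma~\ref{productdecomposition}(ii) to upgrade $\upgamma^n \sim \upgamma_i^{n_i}$ to a genuine equality $\upgamma^n = \widetilde{\upgamma}_i^{n_i}$ for some $\widetilde{\upgamma}_i \sim \upgamma_i$, whence $t_\upgamma = t_{\widetilde{\upgamma}_i} =: p$. The key move is then to transfer the minimality hypothesis on $\upgamma$ (with respect to admitting a walk ending in $w$) to minimality with respect to having tail $p$: by Lemma~\ref{uniquenessofrelations}(i) the relation-breakpoints of the $(w_i)$ and $(p_i)$ decompositions of $\upgamma$ coincide, so each proper right substring $p_{2k}\cdots p_{s-1}$ equals $w_{2k}\cdots w_{s-1}$, a perfect walk ending in $w$, and hence cannot be a stable relation cycle. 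Since any stable relation cycle with tail $p$ is (by backward unique extension from $p$) one of these right substrings or all of $\upgamma$, this shows $\upgamma$ is minimal among stable relation cycles with tail $p$; comparing with $\widetilde{\upgamma}_i$ gives $\upgamma = \widetilde{\upgamma}_i \sim \upgamma_i$.
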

\begin{proof}
	Let $\upgamma_i \in \Gamma_i$ be a class with ${\sf ord}_i(\upgamma_i)$ minimal; we will show that $\upgamma \sim \upgamma_i$, so that ${\sf ord}_i(\upgamma) = {\sf ord}_i(\upgamma_i)$. Since $\upgamma \approx \upgamma_i$, there are $n, n_i \geq 1$ such that $\upgamma^n \sim \upgamma_i^{n_i}$. We have seen in Lemma \ref{productdecomposition} that the associated set of $\upgamma_i^{n_i}$ is of the form $\{ \widetilde{\upgamma}_i^{n_i} \ | \ \widetilde{\upgamma}_i \sim \upgamma_i \}$, and so $\upgamma^n = \widetilde{\upgamma}_i^{n_i}$ for some $\widetilde{\upgamma}_i \sim \upgamma_i$. The unique extension property (Prop. \ref{uniqueextensionproperty}) then gives $t_{\upgamma} = t_{\upgamma^n} = t_{\widetilde{\upgamma}_i^{n_i}} = t_{\widetilde{\upgamma}_i}$, and so $\upgamma, \widetilde{\upgamma}_i$ are stable relation cycles with the same tails. 

	Let $p = t_{\upgamma} = t_{\widetilde{\upgamma}_i}$ be the common tail, and note that we may have $p \neq w_{s-1}$ as we did not assume $w_{s-1}$ was the tail of $\upgamma = w_0 w_1 \dots w_{s-1}$. We can then rewrite the $(s-1)$-chain $\upgamma$ as $\upgamma = p_0 p_1 \dots p_{s-1}$ for the perfect cycle $(p_0, p_1, \dots, p_{s-1})$ ending in $p = p_{s-1}$. While we may have $w_i \neq p_i$ in general, note that the relations $w_0 w_1 = p_0 p_1$, \dots, $w_{s-2} w_{s-1} = p_{s-2} p_{s-1}$ are unique by Lemma \ref{uniquenessofrelations}, and therefore we have equalities of proper right subtrings $\upgamma_{2k}$ of $\upgamma$: 
	\begin{align}
		\upgamma_{2k}:= p_{2k} p_{2k+1} \dots p_{s-2} p_{s-1} = w_{2k} w_{2k+1} \dots w_{s-2} w_{s-1}.	
	\end{align}
	Our assumption on $\upgamma$ shows that $\upgamma_{2k}$ is not a stable relation cycle for any $k > 0$. It follows that $\upgamma$ also has the smallest path length amongst stable relation cycles with tail $t_{\upgamma} = p$. Since $\widetilde{\upgamma}_i$ has minimal value of ${\sf ord}_i(\widetilde{\upgamma}_i) = \frac{1}{\gcd_i} {\sf len}(\widetilde{\upgamma}_i)$ in $\N_{\Gamma_i}$ and tail $t_{\widetilde{\upgamma}_i} = p$, we conclude that $\upgamma = \widetilde{\upgamma}_i$ and so finally $\upgamma \sim \upgamma_i$. 
\end{proof}
	
By the lemma we see that any perfect path $w$ in $\Lambda$ can be extended to a stable relation cycle $\upgamma = w_0 w_1 \dots w_{s-1}$, not necessarily with $t_{\upgamma} = w_{s-1}$, such that $\chi_\upgamma = \chi_i$ agrees with a term of $\chi$ in \eqref{chiclass}, where $\upgamma \in \Gamma_i$ belongs to the $i$-th branch equivalence class. This will let us control multiplication against $\chi \in {\rm Ext}^*_\Lambda(\Bbbk, \Bbbk)$, as in the next proposition: 

\begin{prop}\label{chiperiodicity} Let $\Lambda$ be a Gorenstein monomial algebra and $\chi$ be the class defined in \eqref{chiclass}. Then left multiplication by $\chi$ 
	\begin{align*}
		\chi \cdot -: {\rm Ext}^n_\Lambda(\Bbbk, \Bbbk) \to {\rm Ext}^{n+p}_\Lambda(\Bbbk, \Bbbk) 
	\end{align*} 
	is an isomorphism for all $n \geq \dim \Lambda + 1$ and an epimorphism for $n = \dim \Lambda$. 
\end{prop}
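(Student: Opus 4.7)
The plan is to analyze the action of $\chi$ on the Anick basis of $\Ext^*_\Lambda(\Bbbk, \Bbbk) = \Bbbk \oplus kC_{*-1}^\vee$ directly, using the decomposition $\chi = \sum_{i=1}^b \chi_{\upgamma_i^{m_i}}$ provided by Proposition \ref{multiplicativeproperties}(iii), where each $\upgamma_i^{m_i}$ is a stable relation cycle of Anick degree $p - 1$ lying in the $i$-th branch $\Gamma_i$. The key quantitative point is that $p = m_i s_i$ is even, is a multiple of $\ell$, and satisfies $p \geq s_i \geq d + 1$ with $d = \dim \Lambda$, so that every perfect walk of length $p$ falls under the Recognition Lemma (Lemma \ref{recognitionlemma}).

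The central claim to establish is this: for any $(n-1)$-chain $\eta$ such that $R(t_\eta)$ consists of a perfect path,
\begin{align*}
\chi \cdot \eta^\vee = \pm (\eta \widetilde{\upgamma})^\vee
\end{align*}
where $\widetilde{\upgamma}$ is the unique perfect walk of length $p$ extending $\eta$, produced by iterating Corollary \ref{uniqueextensionproperty}. Indeed, $\eta \widetilde{\upgamma}$ is then an $(n+p-1)$-chain, the Recognition Lemma identifies $\widetilde{\upgamma}$ as a stable relation cycle of Anick degree $p-1$ in some branch $\Gamma_i$, and Lemma \ref{degreeandlength} forces $\widetilde{\upgamma} \sim \upgamma_i^{m_i}$ so that $\widetilde{\upgamma}^\vee$ appears as a summand of $\chi$. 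The Rigidity Lemma (Lemma \ref{rigidity}) rules out any other summand from contributing, and Corollary \ref{newhigherproducts} supplies the formula with its explicit sign.

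Once this is in hand, both conclusions follow from Corollary \ref{highdegreechains}. Given $\zeta \in C_{n + p - 1}$, decompose $\zeta = \upgamma_{d-1} p_d p_{d+1} \cdots p_{n + p - 1}$; splitting after the $n$-th segment gives $\eta := \upgamma_{d-1} p_d \cdots p_{n-1} \in C_{n-1}$ (interpreted as $\eta := \upgamma_{d-1}$ when $n = d$) and $\widetilde{\upgamma} := p_n \cdots p_{n + p - 1}$. For $n \geq d$, the tail $t_\eta$ has a perfect path in $R(t_\eta)$ — automatic by Corollary \ref{highdegreechains} when $n - 1 \geq d$, and supplied by the decomposition itself when $n - 1 = d - 1$ — so the central claim yields $\chi \cdot \eta^\vee = \pm \zeta^\vee$, giving surjectivity. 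For injectivity at $n \geq d+1$, the uniqueness of the splitting in Corollary \ref{highdegreechains} forces distinct $\eta_1, \eta_2 \in C_{n-1}$ to produce distinct products, since $t_{\eta_j}$ is automatically perfect and $\eta_j$ is recoverable from $\eta_j \widetilde{\upgamma}_j$. Injectivity genuinely fails at $n = d$ precisely because a $(d-1)$-chain $\eta$ may have $R(t_\eta) = \emptyset$ (permitted by Theorem \ref{gorensteincharacterisation}), sending $\chi \cdot \eta^\vee$ to zero.

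The main obstacle is establishing the central claim cleanly — specifically, confirming that of all the summands of $\chi$ across all $b$ branches, exactly one contributes a single non-zero term to $\chi \cdot \eta^\vee$, and that the resulting $\widetilde{\upgamma}$ really lies in the associated set of one of the $\upgamma_i^{m_i}$. This is where the Recognition Lemma, the Rigidity Lemma, and Lemma \ref{degreeandlength} must interact with the structure of $\chi$ from Section \ref{sec:Period} in a coordinated way; once this unique-term identity is in place, the remainder of the argument is essentially a restatement of Corollary \ref{highdegreechains}.
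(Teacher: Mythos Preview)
Your proposal is correct and follows essentially the same route as the paper's proof: both establish that $\chi$ acts on the Anick basis by sending each $\eta^\vee$ (with suitable tail condition) to the dual of its unique perfect-walk extension of length $p$, then read off injectivity and surjectivity from Corollary~\ref{highdegreechains} and Theorem~\ref{gorensteincharacterisation}.

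The one substantive difference lies in how you identify \emph{which} summand of $\chi$ contributes. The paper takes the minimal stable relation cycle $\upgamma$ through the perfect tail and invokes Lemma~\ref{minvalue} to show ${\sf ord}_i(\upgamma)$ is minimal, whence $\upgamma \sim \upgamma_i$ and the term $\chi_i^{m_i}$ picks up $\eta^\vee$. You instead first rewrite $\chi = \sum_i \chi_{\upgamma_i^{m_i}}$ via Proposition~\ref{multiplicativeproperties}(iii), then observe that the extending walk $\widetilde{\upgamma}$ is itself a stable relation cycle of Anick degree $p-1$ lying in some branch $\Gamma_i$, and conclude $\widetilde{\upgamma} \sim \upgamma_i^{m_i}$ directly from Lemma~\ref{degreeandlength}. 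Your route bypasses Lemma~\ref{minvalue} entirely and is arguably more economical; the paper's route has the small advantage of keeping the original generators $\chi_i$ visible throughout rather than passing to their powers.
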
 
\begin{proof}  If $\gldim \Lambda < \infty$ the claim is trivial and so assume that $\gldim \Lambda = \infty$. We consider the action of $\chi \cdot -$ on the basis of chains $kC_{n-1}^\vee \cong {\rm Ext}^n_\Lambda(\Bbbk, \Bbbk)$ and prove that $\chi \cdot -$ is injective for $n \geq \dim \Lambda + 1$ and surjective for $n \geq \dim \Lambda$. 

	Let $n \geq \dim \Lambda + 1$. Then by Theorem \ref{gorensteincharacterisation} any $\upgamma_{n-1} \in C_{n-1}$ can be written as $\upgamma_{n-1} = \upgamma_{n-2} p$ for $p = t_{\upgamma_{n-1}}$ perfect (using $n > \dim \Lambda$). Writing $p = p_{s-1}$, we let ${\bf p} = (p_0, p_1, \dots, p_{s-1})$ be the perfect cycle of minimal length such that $\upgamma = p_0 p_1 \dots p_{s-1}$ is a stable relation cycle. Letting $\Gamma_i$ be the branch equivalence class of $\upgamma$, Lemma \ref{minvalue} shows that ${\sf ord}_i(\upgamma) \in \N_{\Gamma_i}$ attains the minimal value. In particular $\upgamma \sim \upgamma_i$ for any other $\upgamma_i \in \Gamma_i$ with ${\sf ord}_i(\upgamma_i)$ also attaining the minimum value.  

Writing $\chi = \sum_{i=1}^b \chi_{i}^{m_i}$ as in \eqref{chiclass} where $\chi_i := \chi_{\upgamma_i}$ for $\upgamma_i \in \Gamma_i$ a class with ${\sf ord}_i(\upgamma_i)$ minimal, this shows that each $(n-1)$-chain $\upgamma_{n-1}$ has tail $p$ given by a perfect path occuring as tail of a stable relation cycle $\upgamma$ with $\upgamma \sim \upgamma_i$ for some $i$. The unique extension property (Prop. \ref{uniqueextensionproperty}) and the rigiditiy lemma (Lemma \ref{rigidity}) then give 
\begin{align}
	\chi \cdot \upgamma_{n-1}^\vee &= \chi_{i}^{m_i} \cdot \upgamma_{n-1}^\vee \\ 
	&= (\upgamma^{m_i})^\vee \cdot \upgamma_{n-1}^\vee\\
	&= (\upgamma_{n-1} \upgamma^{m_i})^\vee 
\end{align}
where $\upgamma_{n-1} \upgamma^{m_i}$ is the unique right extension of $\upgamma_{n-1}$ as an $(n-1+p)$-chain. Since $\upgamma_{n-1}$ can be recovered from the $(n-1+p)$-chain $\upgamma_{n-1} \upgamma^{m_i}$ by successively removing tails, this shows that $\chi \cdot -: kC_{n-1}^\vee \to kC_{n-1+p}^\vee$ is injective. 

Now let $n \geq \dim \Lambda$. We prove that $\chi \cdot -: kC_{n-1}^\vee \to kC_{n-1+p}^\vee$ is surjective analogously. Corollary \ref{highdegreechains} shows that every $(n-1+p)$-chain is of the form 
\begin{align}
\upgamma_{n-1} w_n w_{n+1} \dots w_{n-1+p} 
\end{align}
for a perfect walk $(w_n, w_{n+1}, \dots, w_{n-1+p})$ and an $(n-1)$-chain $\upgamma_{n-1}$. Since the perfect walk $(w_n, w_{n+1}, \dots, w_{n-1+p})$ has length $p \geq \dim \Lambda + 1$, with $p$ even and a multiple of $\ell$, the recognition lemma (Lemma \ref{recognitionlemma}) shows that $w_n w_{n+1} \dots w_{n-1+p}$ is a stable relation cycle, and we may write $w_n w_{n+1} \dots w_{n-1+p} = \upgamma^m$ for some stable relation cycle $\upgamma$ of minimal path length and some $m \geq 1$. Letting $\Gamma_i$ again be the class of $\upgamma$, Lemma \ref{minvalue} shows that ${\sf ord}_i(\upgamma)$ takes on minimal value so that $\upgamma \sim \upgamma_i$; in particular $\chi_{\upgamma} = \chi_{\upgamma_i}$ and the equality $m |\chi_\upgamma| = p = m_i |\chi_{\upgamma_i}|$ forces $m = m_i$. It follows that every $(n-1+p)$-chain has the form $\upgamma_{n-1} \upgamma^{m_i}$, and so every basis element of $kC_{n-1+p}^\vee$ can be written as 
\begin{align}
	(\upgamma_{n-1} \upgamma^{m_i})^\vee = \chi \cdot \upgamma_{n-1}^\vee	
\end{align}
for some $\upgamma_{n-1}^\vee \in C_{n-1}^\vee$. This proves surjectivity, and we are done. 
\end{proof}

\begin{rem} The bound $n \geq \dim \Lambda + 1$ cannot be improved to $n \geq \dim \Lambda$ in general. This is clear if $\gldim \Lambda < \infty$ as $\chi = 0$ and ${\rm Ext}^{\dim \Lambda}_\Lambda(\Bbbk, \Bbbk) \neq 0$ then, but even when $\gldim \Lambda = \infty$ this can fail due to non-trivial projective summands in $\Omega^{\dim \Lambda}\ \! \Bbbk$ as seen in Section \ref{sec:gorensteinexample}. 
\end{rem}

We now obtain the main result of this paper. 
\begin{thm}\label{gorensteinfg} Let $\Lambda$ be a monomial algebra. Then $\Lambda$ satisfies ${\bf Fg}$ if and only if $\Lambda$ is Gorenstein. 
\end{thm}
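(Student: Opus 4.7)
The direction ${\bf Fg} \Rightarrow$ Gorenstein is precisely the theorem of Erdmann, Holloway, Snashall, Solberg, and Taillefer \cite[Thm.~1.5]{EHSST} and requires no further argument. For the converse, assume that $\Lambda$ is a Gorenstein monomial algebra. If $\gldim \Lambda < \infty$, then ${\rm Ext}^*_\Lambda(M,N)$ is finite-dimensional for all finitely generated $M, N$, and taking ${\rm H} = {\rm HH}^0(\Lambda, \Lambda)$ verifies both ${\bf Fg}\ \! 1$ and ${\bf Fg}\ \! 2$ trivially. From now on one may assume $\gldim \Lambda = \infty$, so that the constructions of Section~\ref{sec:Period} apply and stable relation cycles exist.

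The plan is to build ${\rm H}$ directly from the class $\chi$ defined in \eqref{chiclass}. By the results of Section~\ref{sec:Period}, this class is $A_\infty$-central and admits a canonical lift to an element of ${\rm HH}^p(\Lambda, \Lambda)$, which by abuse of notation will again be denoted $\chi$; recall that $p = |\chi|$ is even. Define ${\rm H} \subseteq {\rm HH}^{\sf ev}(\Lambda, \Lambda)$ to be the subalgebra generated by ${\rm HH}^0(\Lambda, \Lambda)$ together with $\chi$. Since $\chi$ has even degree it is genuinely central in ${\rm HH}^*(\Lambda, \Lambda)$, and since ${\rm HH}^0(\Lambda, \Lambda)$ is finite-dimensional, ${\rm H}$ is a finitely generated commutative $k$-algebra with ${\rm H}^0 = {\rm HH}^0(\Lambda, \Lambda)$, hence Noetherian. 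This verifies ${\bf Fg}\ \! 1$.

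For ${\bf Fg}\ \! 2$, standard Jordan-H\"older filtration arguments reduce the problem to showing that ${\rm Ext}^*_\Lambda(\Bbbk, \Bbbk)$ is finitely generated as an ${\rm H}$-module. The action factors through the characteristic homomorphism $\varphi_\Bbbk$, and since $\varphi_\Bbbk$ sends the Hochschild lift of $\chi$ back to the original class $\chi \in {\rm Ext}^*_\Lambda(\Bbbk, \Bbbk)$, it is enough to show finite generation over the subalgebra $k[\chi] \subseteq \varphi_\Bbbk({\rm H})$. By Proposition~\ref{chiperiodicity}, multiplication by $\chi$ is surjective ${\rm Ext}^n_\Lambda(\Bbbk, \Bbbk) \twoheadrightarrow {\rm Ext}^{n+p}_\Lambda(\Bbbk, \Bbbk)$ for every $n \geq \dim \Lambda$, so a straightforward induction shows that every element in degree at least $\dim \Lambda + p$ is of the form $\chi \cdot \alpha$ for some $\alpha$ of strictly smaller degree. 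Hence ${\rm Ext}^*_\Lambda(\Bbbk, \Bbbk)$ is generated as a $k[\chi]$-module by its finite-dimensional truncation in degrees less than $\dim \Lambda + p$, which completes the verification of ${\bf Fg}$.

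The main obstacle in this approach has in fact already been resolved in the preceding sections: the construction of stable relation cycles, the proof that the associated periodicity operators are $A_\infty$-central (so that they lift canonically to Hochschild cohomology), and the periodicity theorem of Proposition~\ref{chiperiodicity} which in turn rested on the normal forms for high-degree Anick chains of Corollary~\ref{highdegreechains}. Granted all of this, the conclusion of the main theorem reduces to the elementary Noetherian-module bookkeeping sketched above, combined with the equivalence ${\bf Fg} \Leftrightarrow {\bf Fg'}$ of Proposition~\ref{Fgequivalence}.
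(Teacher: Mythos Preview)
Your proof is correct and follows essentially the same strategy as the paper: invoke \cite[Thm.~1.5]{EHSST} for one direction, then in the Gorenstein case use the class $\chi$ of \eqref{chiclass} together with Proposition~\ref{chiperiodicity} to see that ${\rm Ext}^*_\Lambda(\Bbbk,\Bbbk)$ is module-finite over $k[\chi]$. The only cosmetic difference is that you verify ${\bf Fg}$ directly by building ${\rm H} = \langle {\rm HH}^0, \chi \rangle \subseteq {\rm HH}^{\sf ev}$, whereas the paper checks the equivalent conditions ${\bf Fg'}$ on $\mathcal{Z}_\infty$ and then appeals to Proposition~\ref{Fgequivalence}; for this reason your closing reference to Proposition~\ref{Fgequivalence} is redundant, since you have already established ${\bf Fg}$ without it.
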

\begin{proof} As we mentioned before, the necessary implication is due to Erdmann-Holloway-Snashall-Solberg-Taillefer in \cite[Th. 1.5]{EHSST}. Conversely assume that $\Lambda$ is Gorenstein, and that $\gldim \Lambda = \infty$ as the result is trivial otherwise. 

In this case Proposition \ref{chiperiodicity} shows that $\Ext^*_\Lambda(\Bbbk, \Bbbk)$, as a module over its polynomial subalgebra $k[\chi] \subseteq \mathcal{R} \subseteq \mathcal{Z}_\infty {\rm Ext}^*_\Lambda(\Bbbk, \Bbbk)$, is generated by elements of degree at most $\dim \Lambda + p - 1$. It follows that ${\rm Ext}^*_\Lambda(\Bbbk, \Bbbk)$ and $\mathcal{Z}_\infty = \mathcal{Z}_\infty {\rm Ext}^*_\Lambda(\Bbbk, \Bbbk) \subseteq {\rm Ext}^*_\Lambda(\Bbbk, \Bbbk)$ are module-finite over $k[\chi]$. In particular $\mathcal{Z}_\infty$ is Noetherian and ${\rm Ext}^*_\Lambda(\Bbbk, \Bbbk)$ is module-finite over $\mathcal{Z}_\infty$, so that ${\bf Fg}$ holds by Proposition \ref{Fgequivalence}. 
\end{proof}

\section{Further results}\label{sec:Further}

\subsection{Combinatorial characterisation of finite generation} 

Let $\Lambda$ be a monomial algebra. We have just established that $\Lambda$ satisfies ${\bf Fg}$ if and only if it is Gorenstein, and we would like to have a simple combinatorial criterion for testing the Gorenstein property of $\Lambda$ starting from the quiver and relations. 

For any $d$ with $1 \leq d < \infty$, we have seen in Theorem \ref{gorensteincharacterisation} a combinatorial characterisation for the condition that $\Lambda$ is Gorenstein of dimension $\dim \Lambda \leq d$ in terms of the structure of $(d-1)$-chains. More precisely, when this holds one sees a periodicity occuring in the structure $n$-chains for all $n \geq d$, in that all subsequent chains are obtained by appending the next path in a perfect cycle, which are periodic. 

Starting with an arbitrary monomial algebra $\Lambda$, this eventual periodicity of $n$-chains, if it occurs, could a priori begin in arbitrarily large degree $n \gg 0$, and so it is not clear that one can rule out the ${\bf Fg}$ property for $\Lambda$ by inspecting finitely many chains. 

As it turns out, there is a simple upper bound $n_\Lambda$ such that this periodicity either occurs for $n$-chains for all $n \geq n_\Lambda$ or never occurs at all. Recall that the (little) right finitistic dimension of $\Lambda$ is defined as 
\begin{align}
	{\rm rfindim}\ \! \Lambda := \sup \{ \pdim M \ | \ M \in \modsf \Lambda \textup{ and } \pdim M < \infty \}. 	
\end{align}
The left finitistic dimension is defined analogously via left modules, or simply via ${\rm lfindim}\ \! \Lambda = {\rm rfindim} \ \! \Lambda^{op}$. From the work of Igusa, Zacharia \cite{IZ} and Green, Kirkman and Kuzmanovich \cite{GKK}, one knows that the finitistic dimension of a monomial algebra is always finite. Moreover, if $\Lambda$ is any Gorenstein algebra then the finitistic dimension is also known to be finite, and in fact agrees with the Gorenstein dimension ${\rm rfindim}\ \! \Lambda = \dim \Lambda$ by \cite{Iw} (and then ${\rm lfindim} \ \! \Lambda = \dim \Lambda^{op} = \dim \Lambda = {\rm rfindim}\ \! \Lambda$). Letting $n_\Lambda$ be any upper bound for the finitistic dimension of a monomial algebra $\Lambda$, it follows that $\Lambda$ is either Gorenstein of dimension $\dim \Lambda \leq n_\Lambda$ or not Gorenstein at all. 

Simple upper bounds are known by work of Igusa, Zacharia \cite{IZ} and Zimmermann-Huisgen \cite{ZH}. For instance, let $K \geq 0$ be the minimal integer such that ${\sf r}^{K+1} = 0$ and set 
\begin{align}
	n_\Lambda := \min \{ \dim_k {\sf r},\ \! \dim_k \Lambda/{\sf r}^K \}. 
\end{align}
Then ${\rm rfindim}\ \! \Lambda \leq n_\Lambda$ by \cite[Section 3]{ZH}. Using this upper bound, we obtain the following decidable combinatorial criterion for ${\bf Fg}$: 

\begin{thm} Let $\Lambda$ be a monomial algebra. The following are equivalent:
	\begin{enumerate}[(1)] 
	\item $\Lambda$ satisfies ${\bf Fg}$.
	\item $\Lambda$ is Gorenstein.
	\item Let $n = {\rm rfindim}\ \! \Lambda$. Then every $n$-chain $\upgamma$ has a perfect path $t_\upgamma$ for tail. 
	\item Let $n = {\rm lfindim}\ \! \Lambda$. Then every $n$-chain $\upgamma$ has a perfect path $h_\upgamma$ for head. 
	\item Let $n = n_\Lambda$. Then every $n$-chain $\upgamma$ has a perfect path $t_\upgamma$ for tail. 
	\item Let $n = n_\Lambda$. Then every $n$-chain $\upgamma$ has a perfect path $h_\upgamma$ for head. 
\end{enumerate}
\end{thm}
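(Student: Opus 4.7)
My approach is to reduce everything to the combinatorial characterisation of the Gorenstein property from Theorem \ref{gorensteincharacterisation}, combined with Iwanaga's equality ${\rm rfindim}\ \! \Lambda = \dim \Lambda$ for Gorenstein algebras and the finiteness of ${\rm rfindim}\ \! \Lambda$ for monomial algebras (Igusa--Zacharia). The equivalence $(1) \Leftrightarrow (2)$ is already Theorem \ref{gorensteinfg}, so the real work is to establish $(2) \Leftrightarrow (3), (4), (5), (6)$. I will prove $(2) \Leftrightarrow (3)$ and $(2) \Leftrightarrow (5)$ directly and then deduce $(2) \Leftrightarrow (4)$ and $(2) \Leftrightarrow (6)$ by passing to the opposite algebra $\Lambda^{\rm op}$, which is again monomial, remains Gorenstein by the theorem of Zaks, interchanges right and left finitistic dimensions, and turns right Anick chains into left ones so that tails become heads.

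For the forward implications $(2) \Rightarrow (3)$ and $(2) \Rightarrow (5)$, if $\Lambda$ is Gorenstein then $\dim \Lambda = {\rm rfindim}\ \! \Lambda \leq n_\Lambda$, so in both cases $\dim \Lambda \leq n$, equivalently $\dim \Lambda < n+1$. Applying the ``moreover'' clause of Theorem \ref{gorensteincharacterisation} with $d = n$ gives at once that every $n$-chain $\upgamma$ has its tail $t_\upgamma$ itself perfect, as required. Alternatively one could appeal to Corollary \ref{highdegreechains}, which stabilises the structure of $n$-chains for $n \geq \dim \Lambda$.

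For the converse implications $(3) \Rightarrow (2)$ and $(5) \Rightarrow (2)$, I would argue by cases. If $C_n = \emptyset$ then Theorem \ref{th:AnickTor} forces $\Tor^\Lambda_{n+1}(\Bbbk, \Bbbk) = 0$, hence $\gldim \Lambda \leq n < \infty$ and $\Lambda$ is trivially Gorenstein. Otherwise, for each $n$-chain $\upgamma$ the hypothesis gives $t_\upgamma = p$ perfect, and by definition of perfection $R(p) = \{q\}$ with $q$ the next (perfect) path in the associated perfect cycle; condition (ii) of Theorem \ref{gorensteincharacterisation} is then satisfied and $\Lambda$ is Gorenstein of dimension at most $n+1$. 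The subtlest point — which is really bookkeeping rather than a genuine obstacle — is the interplay between the stronger condition ``$t_\upgamma$ is perfect'' appearing in $(3), (5)$ and the weaker ``$R(t_\upgamma)$ is empty or a perfect singleton'' appearing in Theorem \ref{gorensteincharacterisation}: the two are bridged precisely by the ``moreover'' clause in one direction and by the fact that perfect paths always have non-empty, singleton, perfect right cofactor sets in the other, so that the empty case of Theorem \ref{gorensteincharacterisation} is absorbed into the vacuous case $C_n = \emptyset$ handled above.
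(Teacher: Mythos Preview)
Your proof is correct and follows essentially the same route as the paper: reduce $(1)\Leftrightarrow(2)$ to Theorem~\ref{gorensteinfg}, use the duality $\Lambda\leftrightarrow\Lambda^{\rm op}$ to pair off $(3)$--$(4)$ and $(5)$--$(6)$, and then run both directions of $(2)\Leftrightarrow(3)$ and $(2)\Leftrightarrow(5)$ through Theorem~\ref{gorensteincharacterisation} together with Iwanaga's equality $\dim\Lambda={\rm rfindim}\,\Lambda$ and the bound ${\rm rfindim}\,\Lambda\le n_\Lambda$. Your final sentence slightly conflates the case $R(t_\upgamma)=\emptyset$ in Theorem~\ref{gorensteincharacterisation} with the vacuous case $C_n=\emptyset$, but this is harmless since your actual argument (perfect $t_\upgamma$ forces $R(t_\upgamma)$ to be a perfect singleton) already verifies condition~(ii) directly.
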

\begin{proof} The equivalence of (1) and (2) follows from Theorem \ref{gorensteinfg}. Next, note that (3)-(4) and (5)-(6) are dual under $\Lambda \leftrightarrow \Lambda^{\rm op}$, and as (2) is self-dual it is enough to show that (3) is equivalent to (2) and (5) is equivalent to (2). 

	Set $N_\Lambda = {\rm rfindim}\ \! \Lambda$. Assuming (3), Theorem \ref{gorensteincharacterisation} shows that $\Lambda$ is Gorenstein of dimension $\dim \Lambda \leq N_\Lambda + 1$, in particular $\Lambda$ is Gorenstein and (2) holds. Conversely, by the remarks above $\Lambda$ is Gorenstein if and only if $\Lambda$ is Gorenstein of dimension $\dim \Lambda \leq N_\Lambda$. Assuming (2), we obtain that $\dim \Lambda < N_\Lambda + 1$ and so (3) follows from Theorem \ref{gorensteincharacterisation}. Finally the equivalence of (2) and (5) follows verbatim by setting $N_\Lambda = n_\Lambda$ instead in the previous argument. 
\end{proof}
\begin{rem} In practice the bound $n_\Lambda$ appears not to be very sharp and so periodicity may occur much faster; in any practical implementation better upper bounds should be used, see \cite{ZH} for further details. Our choice of bound $n_\Lambda$ was mainly for its simplicity to state. 
\end{rem}

\subsection{The structure of Hochschild cohomology} We conclude with some interesting corollaries on the structure of Hochschild cohomology. Let $\chi \in {\rm Ext}^p_\Lambda(\Bbbk, \Bbbk)$ be the $A_\infty$-central class defined in \eqref{chiclass}, and we also write $\chi \in {\rm HH}^p(\Lambda, \Lambda)$ for the corresponding lift. 

\begin{thm}\label{periodicHH} Let $\Lambda$ be a Gorenstein monomial algebra. Then ${\rm HH}^*(\Lambda, \Lambda)$ is eventually periodic; more precisely, taking cup product with $\chi$ gives an isomorphism 
	\[
		\chi \smile -: {\rm HH}^{n}(\Lambda, \Lambda) \xrightarrow{\cong} {\rm HH}^{n+p}(\Lambda, \Lambda) \quad \textup{ for all } n \geq \dim \Lambda + 1.
	\]
\end{thm}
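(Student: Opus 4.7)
The proof will combine Theorem \ref{gorensteinfg} (so that $\Lambda$ satisfies $\mathbf{Fg}$) with the sharp $\mathrm{Ext}$-level periodicity of Proposition \ref{chiperiodicity}. The class $\chi \in HH^p(\Lambda, \Lambda)$ is the lift described in the Remark following the definition of periodicity operators: under the Koszul-duality isomorphism $HH^*(\Lambda, \Lambda) \cong HH^*(E, E)$, with $E := \mathrm{Ext}^*_\Lambda(\Bbbk, \Bbbk)$ treated as an $A_\infty$-algebra, $\chi$ corresponds to the Hochschild $0$-cocycle $\chi \in E^p$.

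I would first establish qualitative eventual periodicity. The proof of Theorem \ref{gorensteinfg} already shows that $E$ is module-finite over $k[\chi]$, and since the characteristic homomorphism $\varphi_\Bbbk$ has nilpotent kernel, a standard d\'evissage (filter $HH^*(\Lambda, \Lambda)$ by powers of $\ker \varphi_\Bbbk$, each graded piece a subquotient of the module-finite module $E$) gives that $HH^*(\Lambda, \Lambda)$ is itself module-finite over $k[\chi]$. Hence $HH^*(\Lambda, \Lambda)/\chi\, HH^{*-p}(\Lambda, \Lambda)$ is a finite-dimensional graded $k$-algebra, so both the kernel and cokernel of $\chi \smile -$ are finite-dimensional, which yields the isomorphism in all sufficiently large degrees.

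For the sharp bound $n \geq d+1$, the key observation is that on the Hochschild cochain complex $C^*(E, E)$, cup product with the $0$-cocycle $\chi$ is simply post-composition with left multiplication $\chi \cdot -: E \to E[p]$. By Proposition \ref{chiperiodicity} this is an isomorphism on $E^n$ for $n \geq d+1$, with kernel $K$ concentrated in degrees $\leq d$ and cokernel $C$ concentrated in degrees $\leq d+p-1$. Splitting the four-term exact sequence of $A_\infty$-$E$-bimodules $0 \to K \to E \to E[p] \to C \to 0$ into two short exact sequences and applying the associated long exact sequences in $HH^*(E, -)$, the degree bounds on $K$ and $C$ translate into the vanishing of $HH^n(E, K)$ and $HH^{n-1}(E, C)$ (together with their adjacent terms) precisely in the range $n \geq d+1$, forcing $\chi \smile -: HH^n(E, E) \to HH^{n+p}(E, E)$ to be an isomorphism there.

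The main obstacle is the last step: one must rigorously produce the required long exact sequences for Hochschild cohomology of an $A_\infty$-algebra with coefficients in bounded bimodules, equip $K$ and $C$ with compatible $A_\infty$-$E$-bimodule structures, and carefully translate the degree bounds on the coefficients into the sharp vanishing statement for $HH^*(E, K)$ and $HH^*(E, C)$. An alternative, more combinatorial route that sidesteps the $A_\infty$-bimodule machinery is to work directly with Bardzell's minimal bimodule resolution $R_\bullet \to \Lambda$: Corollary \ref{highdegreechains} together with the Unique Extension Property (Corollary \ref{uniqueextensionproperty}) produces an explicit bijection $C_{n+p} \leftrightarrow C_n$ for $n \geq d$ given by appending the perfect walk associated to $\chi$, and, in view of the alternating form of Bardzell's differential derived from Proposition \ref{vanishingpatterns}, this bijection lifts to an explicit chain map $R_{\bullet + p} \to R_\bullet$ representing $\chi \smile -$ which is visibly an isomorphism of free bimodules in every degree $\geq d+1$, from which the desired isomorphism on $HH^n$ follows by applying $\Hom_{\Lambda^{\sf e}}(-, \Lambda)$.
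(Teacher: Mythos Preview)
Your two routes both reduce to Proposition~\ref{chiperiodicity}, which is the right instinct, but neither reaches the sharp bound $n\ge\dim\Lambda+1$ as stated, and the paper's argument is different and simpler.

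In your Approach~B the step ``from which the desired isomorphism on $HH^n$ follows by applying $\Hom_{\Lambda^{\sf e}}(-,\Lambda)$'' loses one degree. If the chain map $R_{\bullet+p}\to R_\bullet$ is an isomorphism of free bimodules in degrees $\ge d+1$, then after applying $\Hom_{\Lambda^{\sf e}}(-,\Lambda)$ you get a map of cochain complexes that is an isomorphism in degrees $\ge d+1$; this forces the induced map on $H^n$ to be an isomorphism only for $n\ge d+2$. At $n=d+1$ the images of the differentials from degree $d$ need not match. Even feeding in the epimorphism at degree $d$ from Proposition~\ref{chiperiodicity} (which by Nakayama makes the chain map a split epimorphism in degree $d$, hence $\Hom$ a split monomorphism) does not repair this: a mono in degree $d$ does not control the coboundaries in degree $d+1$. (In fact the paper does carry out exactly your Approach~B later, in the proof of Corollary~\ref{TateHH}, to obtain the isomorphism $\chi\colon P_{n+p}\xrightarrow{\cong}P_n$ for $n\ge d+1$, but uses it for Tate--Hochschild, not for the sharp bound in Theorem~\ref{periodicHH}.)

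Your Approach~A could in principle give the sharp bound, since the cokernel of $\chi\cdot-$ on $E$ is concentrated in internal degrees $\le d-1$ and the kernel in degrees $\le d$, and the cochain complex $C^*(E,M)$ vanishes in total degree above the top degree of $M$ when $\bar E$ sits in positive degrees. But, as you yourself flag, you would still need to check that $\chi\cdot-$ is a strict morphism of $A_\infty$-$E$-bimodules (so that $K$ and $C$ inherit bimodule structures and sit in a short exact sequence of such), and that cup product by the $0$-cocycle $\chi$ in $C^*(E,E)$ really is, on cohomology, the map induced by post-composition with $\chi\cdot-$. For a general $A_\infty$-algebra the cup product involves all $m_n$, so this identification requires an argument using $\mathrm{ad}_\chi=0$; it is plausible but not immediate.

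The paper avoids all of this by a dévissage on the \emph{coefficient bimodule} rather than on the resolution or on $E$. One takes a Jordan--H\"older filtration $0=B_{K+1}\subset\cdots\subset B_0=\Lambda$ in $\modsf\Lambda^{\sf e}$ with subquotients in $\mathsf{add}\,\Hom_k(\Bbbk,\Bbbk)$, uses the identification $HH^*(\Lambda,\Hom_k(\Bbbk,\Bbbk))\cong\Ext^*_\Lambda(\Bbbk,\Bbbk)$, and then applies the $5$-lemma to the long exact sequence in $HH^*(\Lambda,-)$. The point is that the $5$-lemma needs exactly four isomorphisms and one epimorphism among the outer vertical maps, and this matches precisely the output of Proposition~\ref{chiperiodicity} (iso for $n\ge d+1$, epi for $n=d$), so the induction goes through at the sharp degree $d+1$ with no loss.
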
 
\begin{proof}
	We give a quantitative version of a standard filtration argument \cite[Prop. 1.4]{EHSST} . First, recall that for any $M, N \in \modsf \Lambda$ there are natural isomorphisms 
	\begin{align}
		{\rm HH}^*(\Lambda, {\rm Hom}_k(M, N)) \cong {\rm Ext}^*_\Lambda(M, N). 
	\end{align}
	Moreover one has ${\rm Hom}_k(\Bbbk, \Bbbk) \cong {\rm D}(\Bbbk) \otimes_k \Bbbk \cong \Lambda^{\sf e}/{\sf rad}\ \! \Lambda^{\sf e}$ for ${\rm D} = {\rm Hom}_k(-, k)$. For any bimodule $B \in \modsf \Lambda^{\sf e}$ the vector space ${\rm HH}^*(\Lambda, B)$ is a right-module over ${\rm HH}^*(\Lambda, \Lambda)$, and this module structure on ${\rm HH}^*(\Lambda, {\rm Hom}_k(\Bbbk, \Bbbk)) \cong {\rm Ext}^*_\Lambda(\Bbbk, \Bbbk)$ agrees with that given by the characteristic morphism $\varphi_\Bbbk$. Now, consider a Jordan-H\"older filtration of $\Lambda$ in $\modsf \Lambda^e$ 
	\begin{align}
		0 = B_{K+1} \subseteq B_K \subseteq \dots \subseteq B_2 \subseteq B_1 \subseteq B_0 = \Lambda	
	\end{align}
	with $B_i/B_{i+1} \in {\sf add}_{\Lambda^{e}}({\rm Hom}_k(\Bbbk, \Bbbk))$ for $0 \leq i \leq K$. The long exact sequence of ${\rm HH}^*(\Lambda, B)$ is compatible\footnote{This follows from binaturality of ${\rm Ext}^*_{\Lambda^e}(\Lambda, M)$ making it a right module over ${\rm Ext}^*_{\Lambda^e}(\Lambda, \Lambda)$; note that a priori this only makes the map $- \smile \chi$ commute with the long exact sequence, but we use $\chi \smile -$ for consistency with the theorem's claim and use graded-commutativity.} with the module structure over ${\rm HH}^*(\Lambda, \Lambda)$ and so gives rise to commutative diagrams of the form (writing ${\rm HH}^n(B) := {\rm HH}^n(\Lambda, B)$ for short)
	\[
		\xymatrix@C=12pt{ 
			{\rm HH}^{n-1}(B_i/B_{i+1}) \ar[d]^-{\chi \smile -}  \ar[r] & {\rm HH}^n(B_{i+1}) \ar[d]^-{\chi \smile - } \ar[r] & {\rm HH}^n(B_{i}) \ar[d]^-{\chi \smile - } \ar[r] & {\rm HH}^n(B_{i}/B_{i+1}) \ar[d]^-{\chi \smile - } \ar[r] & {\rm HH}^{n+1}(B_{i+1}) \ar[d]^-{\chi \smile - } \\ 
			{\rm HH}^{n-1}(B_i/B_{i+1}) \ar[r] & {\rm HH}^n(B_{i+1}) \ar[r] & {\rm HH}^n(B_{i}) \ar[r] & {\rm HH}^n(B_{i}/B_{i+1}) \ar[r] & {\rm HH}^{n+1}(B_{i+1}) \\ 
}
	\]
	Since $B_i/B_{i+1} \in {\sf add}_{\Lambda^e}({\rm Hom}_k(\Bbbk, \Bbbk))$, from the above paragraph and Prop. \ref{chiperiodicity} we see that the fourth column map is an isomorphism for $n \geq \dim \Lambda + 1$, and the first column map is an isomorphism for $n \geq \dim \Lambda + 2$ and an epimorphism for $n = \dim \Lambda + 1$. The $5$-Lemma then shows that if the second and fifth column maps are isomorphism for $n \geq \dim \Lambda + 1$, then so is the third column map. The result for $B_0 = \Lambda$ then follows by induction from the base case $B_K = B_K/B_{K+1} \in {\sf add}_{\Lambda^e}({\rm Hom}_k(\Bbbk, \Bbbk))$, which follows from Prop. \ref{chiperiodicity}. 
\end{proof}

Finally, recall that for $\Lambda$ Gorenstein the enveloping algebra $\Lambda^{\sf e} = \Lambda^{op} \otimes_k \Lambda$ is also Gorenstein, then of dimension $\dim \Lambda^{\sf e} = 2 \dim \Lambda$, see \cite[Prop. 6.1]{BIKP}. Following Buchweitz, we define the Tate-Hochschild cohomology as the Ext algebra of the bimodule $\Lambda \in {\rm D}_{sg}(\Lambda^{\sf e})$ in the singularity category of $\Lambda^{\sf e}$: 
\begin{align}
	\widehat{ {\rm HH}^*}(\Lambda, \Lambda) := {\rm Ext}^*_{ {\rm D}_{sg}(\Lambda^{\sf e})}(\Lambda, \Lambda). 
\end{align}
\begin{cor}\label{TateHH} Let $\Lambda$ be a Gorenstein monomial algebra. Then the Tate-Hochschild cohomology ring is given by periodic Hochschild cohomology: 
	\[
		\widehat{{\rm HH}^*}(\Lambda, \Lambda) \cong {\rm HH}^*(\Lambda, \Lambda)[\chi^{-1}]. 
	\] 
\end{cor}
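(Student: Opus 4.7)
The plan is to realise $\widehat{{\rm HH}^*}(\Lambda, \Lambda)$ as the localisation ${\rm HH}^*(\Lambda, \Lambda)[\chi^{-1}]$ by combining Theorem \ref{periodicHH} with Buchweitz's general comparison between Hochschild and Tate Hochschild cohomology for Gorenstein algebras.

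Since $\Lambda^{\sf e}$ is Gorenstein of dimension $2\dim \Lambda$ by \cite[Prop.~6.1]{BIKP}, Buchweitz's theory provides a natural morphism of graded $k$-algebras
\[
\iota \colon {\rm HH}^*(\Lambda, \Lambda) \longrightarrow \widehat{{\rm HH}^*}(\Lambda, \Lambda)
\]
(extending the source by zero in negative degrees) which is an isomorphism in degrees $n \geq 2\dim\Lambda + 1$, using that the $(2\dim \Lambda)$-th syzygy of any $\Lambda^{\sf e}$-module is Gorenstein-projective. Combined with Theorem \ref{periodicHH}, cup product with $\iota(\chi)$ is an automorphism of $\widehat{{\rm HH}^n}(\Lambda, \Lambda)$ for every $n \geq 2\dim \Lambda + 1$.

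The main step is to upgrade this to the statement that $\iota(\chi)$ is invertible in all of $\widehat{{\rm HH}^*}(\Lambda, \Lambda)$. Lift $\chi$ to a morphism $\widetilde{\chi}\colon \Lambda \to \Lambda[p]$ in the singularity category ${\rm D}_{sg}(\Lambda^{\sf e})$ and embed it in a distinguished triangle $\Lambda \xrightarrow{\widetilde{\chi}} \Lambda[p] \to C \to \Lambda[1]$. Applying $\Hom_{{\rm D}_{sg}(\Lambda^{\sf e})}(-, \Lambda)$ and using the high-degree invertibility established above forces $\Hom_{{\rm D}_{sg}(\Lambda^{\sf e})}(C, \Lambda[n]) = 0$ for $n$ sufficiently large. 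Exploiting the equivalence $\Omega \cong [-1]$ in the stable category of Gorenstein-projectives together with eventual Gorenstein-projectivity of syzygies, the same argument applied to shifts of $\Lambda$ propagates this vanishing to all $n \in \Z$, forcing $C \cong 0$ in ${\rm D}_{sg}(\Lambda^{\sf e})$. Hence $\widetilde{\chi}$ is an isomorphism and cup product with $\iota(\chi)$ is an automorphism of every $\widehat{{\rm HH}^n}(\Lambda, \Lambda)$.

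By the universal property of localisation we then obtain a graded algebra morphism
\[
\Phi \colon {\rm HH}^*(\Lambda, \Lambda)[\chi^{-1}] \longrightarrow \widehat{{\rm HH}^*}(\Lambda, \Lambda),
\]
which we check is bijective in each degree. For surjectivity, given $\alpha \in \widehat{{\rm HH}^n}(\Lambda, \Lambda)$, choose $k \geq 0$ with $n + kp \geq 2\dim \Lambda + 1$; then $\iota(\chi)^k \smile \alpha$ lies in $\iota({\rm HH}^{n+kp}(\Lambda, \Lambda))$ and equals $\iota(a)$ for some $a$, so $\alpha = \Phi(a \chi^{-k})$. For injectivity, if $\Phi(a\chi^{-k}) = 0$ with $a \in {\rm HH}^m(\Lambda, \Lambda)$, choose $\ell$ with $m + \ell p \geq 2\dim \Lambda + 1$: then $\iota(\chi^\ell a) = \iota(\chi)^\ell \iota(a) = 0$ in a degree where $\iota$ is injective, so $\chi^\ell a = 0$ in ${\rm HH}^*(\Lambda, \Lambda)$ and $a\chi^{-k} = 0$ in the localisation. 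The main obstacle is the middle paragraph: propagating the high-degree invertibility of cup product with $\iota(\chi)$ to the entire Tate cohomology ring via a careful singularity-category argument; once established, the rest follows formally.
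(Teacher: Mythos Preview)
Your overall shape matches the paper: use Buchweitz's comparison ${\rm HH}^*(\Lambda,\Lambda) \to \widehat{{\rm HH}^*}(\Lambda,\Lambda)$, show that $\chi$ becomes a unit on the right, factor through the localisation, and check bijectivity degree by degree using that both sides agree in high degree and are periodic there. The final paragraph is fine and is essentially what the paper does.

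The gap is in the middle paragraph, exactly where you flag the difficulty. You want to deduce that $\widetilde{\chi}$ is an isomorphism in ${\rm D}_{sg}(\Lambda^{\sf e})$ from the high-degree invertibility of cup product with $\iota(\chi)$. The long exact sequence of the cone triangle does give $\Hom_{{\rm D}_{sg}(\Lambda^{\sf e})}(C, \Lambda[n]) = 0$ for $n \gg 0$, but your propagation step does not work: invoking $\Omega \cong [-1]$ or replacing $\Lambda$ by a shift only translates the index $n$ and produces no new vanishing for small $n$. And even granting $\Hom(C, \Lambda[n]) = 0$ for all $n \in \Z$, this would not force $C \cong 0$ unless the diagonal bimodule $\Lambda$ generated ${\rm D}_{sg}(\Lambda^{\sf e})$ in a suitable sense, which you have not established and which is not automatic.

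The paper avoids this abstract cone argument by a direct construction. It represents $\chi$ as a chain map on the \emph{minimal} projective bimodule resolution $P_* \xrightarrow{\sim} \Lambda$, tensors down to $\Lambda^{\sf e}/{\sf rad}\,\Lambda^{\sf e}$, identifies the resulting map with (the $\Bbbk$-dual of) multiplication by $\chi$ on $\Ext^*_\Lambda(\Bbbk, \Bbbk)$, and invokes Proposition~\ref{chiperiodicity} directly (rather than Theorem~\ref{periodicHH}). Nakayama then shows that $\chi\colon P_{n+p} \to P_n$ is an isomorphism of projective $\Lambda^{\sf e}$-modules for all $n \geq \dim \Lambda + 1$, so $P_*$ is eventually periodic. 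Extending the periodicity in both directions yields an explicit complete resolution $C_*$ on which $\chi$ is, by construction, the periodicity isomorphism, with inverse $\chi^{-1} \in \widehat{{\rm HH}^{-p}}(\Lambda,\Lambda)$. Invertibility of $\iota(\chi)$ is then immediate, and no vanishing-of-Hom argument is needed.
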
 
\begin{proof} We first find a good model for $\widehat{ {\rm HH}^*}(\Lambda, \Lambda)$ by constructing an appropriate complete resolution of the diagonal. 
	Let $P_* \xrightarrow{\sim}$ $ _\Lambda \Lambda_{\Lambda}$ be a minimal projective resolution of the diagonal bimodule $\Lambda$. We may represent the class $\chi \in {\rm HH}^p(\Lambda, \Lambda)$ by a chain map 
	\begin{align}\label{chirepresentative}
		\chi: P_{*+p} \to P_{*} 
	\end{align}
	The corresponding class $\varphi_\Bbbk(\chi) = \chi \in {\rm Ext}^p_\Lambda(\Bbbk, \Bbbk)$ is then represented by 
	\begin{align}
		\Bbbk \otimes_\Lambda \chi : \Bbbk \otimes_\Lambda P_{*+p} \to \Bbbk \otimes_\Lambda P_{*}. 
	\end{align}
	Minimality of $P_*$ means that $P_* \otimes_{\Lambda^e} \Lambda^e/{\sf rad}\ \! \Lambda^e \cong \Bbbk \otimes_\Lambda P_* \otimes_\Lambda \Bbbk$ has trivial differential, and so $\Bbbk \otimes_\Lambda P_*$ is a minimal right resolution of $\Bbbk$ over $\Lambda$. Right multiplication by $\chi$ then corresponds to 
\[
	\xymatrix{ {\rm Ext}^{n}_\Lambda(\Bbbk, \Bbbk) \ar@{=}[d] \ar[r]^-{- \cdot \chi} & {\rm Ext}^{n+p}_\Lambda(\Bbbk, \Bbbk) \ar@{=}[d] \\ 
		{\rm Hom}_\Lambda(\Bbbk \otimes_\Lambda P_{n}, \Bbbk) \ar@{=}[d] \ar[r]^-{ - \circ \ \! \chi} & {\rm Hom}_\Lambda(\Bbbk \otimes_\Lambda P_{n+p}, \Bbbk) \ar@{=}[d] \\
		{\rm Hom}_\Bbbk(\Bbbk \otimes_\Lambda P_{n} \otimes_\Lambda \Bbbk, \Bbbk) \ar[r]^-{ - \circ \ \! \chi} & {\rm Hom}_\Bbbk(\Bbbk \otimes_\Lambda P_{n+p} \otimes_\Lambda \Bbbk, \Bbbk). 
}
\]

The first row is an isomorphism in degree $n \geq \dim \Lambda + 1$, and the third row is $\Bbbk$-dual to the map $\chi$ tensored down 
\begin{align}
	\chi \otimes_{\Lambda^e} \Lambda^e/{\sf rad}\ \! \Lambda^e: P_{n+p} \otimes_{\Lambda^e} \Lambda^e/{\sf rad}\ \! \Lambda^e \to P_{n} \otimes_{\Lambda^e} \Lambda^e/{\sf rad}\ \! \Lambda^e 
\end{align}
which must then also be an isomorphism for $n \geq \dim \Lambda + 1$. By the Nakayama Lemma we conclude that $\chi: P_{n+p} \to P_n$ is an isomorphism in the same range. In particular $\Lambda$ has an eventually periodic minimal projective bimodule resolution. 

Let $C_*$ be the unique periodic, unbounded, acyclic complex of finite $\Lambda^e$ projectives which agrees with $P_*$ in degree $n \geq \dim \Lambda + 1$; the complex $C_*$ is then a complete resolution of (the Gorenstein-projective approximation of) $\Lambda$ over $\Lambda^e$. By Buchweitz's theorem \cite{Bu} we may compute the Tate-Hochschild cohomology algebra as 
\begin{align}
	\widehat{ {\rm HH}^*}(\Lambda, \Lambda) \cong {\rm H}^*({\rm Hom}_{\Lambda^e}(C_*, C_*))
\end{align}
The chain-map $\chi: P_{*+p} \to P_*$ extends by periodicity to $\chi: C_{*+p} \to C_*$, giving rise to a class $\chi \in \widehat{ {\rm HH}^p}(\Lambda, \Lambda)$ which is the image of $\chi \in {\rm HH}^p(\Lambda, \Lambda)$ under the natural algebra map $\psi: {\rm HH}^*(\Lambda, \Lambda) \to \widehat{ {\rm HH}^*}(\Lambda, \Lambda)$. More importantly, $\chi: C_{*+p} \to C_*$ represents the periodicity isomorphism by construction and therefore admits an inverse $\chi^{-1}: C_{*-p} \to C_*$, with $\chi^{-1} \in \widehat{ {\rm HH}^{-p}}(\Lambda, \Lambda)$. The natural map then factors through the localisation as $\psi = \widetilde{\psi} \circ u$ 
\begin{align}
	{\rm HH}^*(\Lambda, \Lambda) \xrightarrow{u} {\rm HH}^*(\Lambda, \Lambda)[\chi^{-1}] \xrightarrow{ \widetilde{\psi}} \widehat{ {\rm HH}^*}(\Lambda, \Lambda).	
\end{align}
By \cite[6.3.5]{Bu} the map $\psi: {\rm HH}^n(\Lambda, \Lambda) \to \widehat{ {\rm HH}^n}(\Lambda, \Lambda)$ is an isomorphism for all $n \geq \dim \Lambda^{\sf e} + 1 = 2 \dim \Lambda + 1$, and so $u$ is injective in the same degrees. Since multiplication by $\chi$ on ${\rm HH}^{\geq n}(\Lambda, \Lambda)$ acts by periodicity for $n \geq \dim \Lambda + 1$ by the last theorem, the map $u$ is surjective in degree $n \geq \dim \Lambda + 1$. Hence $u$, and therefore $\widetilde{\psi}$, is an isomorphism in degree $n \geq 2 \dim \Lambda + 1$. Finally since $\widetilde{\psi}$ commutes with the action of the periodicity operators $\chi^{\pm 1}$ we see that $\widetilde{\psi}$ is an isomorphism in all degrees.  
\end{proof}

\bibliographystyle{alpha} 
\bibliography{monomial.bib} 
\end{document}